\documentclass[12pt, reqno]{amsart}
\usepackage{amsmath, amsthm, amscd, amsfonts, amssymb, graphicx, color, mathrsfs,enumitem}
 \usepackage{multirow}
\usepackage[all]{xy}
\usepackage{slashed}
\usepackage{mathabx}
\usepackage{tkz-euclide}
\usetikzlibrary{patterns}

\usepackage{graphicx}

\usepackage{tipa}
\usepackage{soul}
\usepackage{cancel}
\usepackage{ulem}
\usepackage{mathtools}

\newcommand{\M}{\mathcal{M}}

\usepackage[
  textwidth=16cm,
  margin=2.5cm
]{geometry}
\usepackage{xcolor}

\newtheorem{theorem}{Theorem}[section]
\newtheorem{lemma}[theorem]{Lemma}

\newtheorem{proposition}[theorem]{Proposition}

\theoremstyle{definition}
\newtheorem{definition}[theorem]{Definition}

\theoremstyle{remark}
\newtheorem{remark}[theorem]{Remark}  
\numberwithin{equation}{section}

\newcommand{\Rn}{{\mathbb{R}}^{n}}

\newcommand{\APQ}{\mathcal{H}_{P, Q}}

\usepackage[
    colorlinks=true,
    citecolor=blue,
    linkcolor=blue,
    urlcolor=blue,
    backref=page
]{hyperref}

\begin{document}
\setcounter{page}{1}

\title[Semigroups Associated with Polynomial Anharmonic Oscillators]{Sharp Time-Decay Estimates for Fractional Heat Semigroups Associated with Polynomial Anharmonic Oscillators}

  \author[J. Delgado]{Julio Delgado}
\address{
  Julio Delgado:
  \endgraf
  Departmento de Matematicas
  \endgraf
  Universidad del Valle
  \endgraf
  Cali-Colombia
  \endgraf
    {\it E-mail address} {\rm delgado.julio@correounivalle.edu.co}}

\author[V. Kumar]{Vishvesh Kumar}
\address{
  Vishvesh Kumar:
  \endgraf
  Department of Mathematical Sciences 
  \endgraf
  Indian Institute of Technology (BHU) Varanasi, Varanasi, India
  \endgraf
  {\it E-mail address} {\rm  vishvesh.mat@iitbhu.ac.in}
  }

  \author[Shyam S. Mondal]{Shyam Swarup Mondal} \address{Shyam Swarup Mondal    \endgraf  Stat-Math Unit \endgraf Indian Statistical Institute (ISI) Kolkata, 203 BT Road,	Kolkata 700108, India} \email{mondalshyam055@gmail.com}

\subjclass[2020]{ 47G30, 35A27, 35K08, 35K55, 35S05, 47D06, 42B35, 35B40, 35B44}

\keywords{
Anharmonic oscillators; fractional heat semigroups; Weyl--H\"ormander calculus; modulation spaces; $L^p$-$L^q$ estimates; nonlinear heat equations; fractional powers; pseudo-differential operators
}
\date{\today}

\begin{abstract}
We investigate fractional heat semigroups generated by a class of anharmonic oscillators on $\mathbb R^n$ of the form $\mathcal H_{P,Q}=Q(D)+P(x),$ 
where $P\in\mathcal P_{2k}$ and $Q\in\mathcal P_{2\ell}$ are real-valued polynomials with anisotropic growth. Using the Weyl--H\"ormander calculus associated with the natural metric determined by $(P,Q)$, we show that the fractional powers $\mathcal H_{P,Q}^s$, $s>0$, are pseudo-differential operators with symbols in adapted classes $\Sigma_{P,Q}^{2s}$.

We prove fixed-time decay estimates for the fractional anharmonic heat semigroup $e^{-t\mathcal H_{P,Q}^s}$ on both Lebesgue and modulation spaces. In the Lebesgue setting, we establish sharp $L^p$--$L^q$ estimates for the full range $1\le p,q\le\infty$. For large time, the decay is exponential and governed by the smallest eigenvalue $\lambda_0$ of $\mathcal H_{P,Q}$, namely through the factor $e^{-t\lambda_0^s}$, while for small time the estimates reveal two distinct phase-space scales associated with the coercive growth of $P$ and $Q$, leading to anisotropic $L^p$--$L^q$ smoothing.

As applications, we study nonlinear fractional heat equations associated with $\mathcal H_{P,Q}^s$. We prove local well-posedness in the supercritical Lebesgue range $ p>\frac{n(\beta-1)}{2\ell s},$
derive a lower blow-up rate for finite-time blow-up solutions, and obtain critical small-data global existence. We further prove global well-posedness and exponential decay for small initial data in modulation spaces. These results extend the heat semigroup theory for harmonic and model anharmonic oscillators to a broad class of anisotropic polynomial Hamiltonians.
\end{abstract}
 \maketitle
\allowdisplaybreaks

\section{Introduction}

The purpose of this article is to study heat semigroups generated by fractional powers of a broad class of anharmonic oscillators on $\mathbb{R}^n$. More precisely, we consider Hamiltonians of the form
\[
\mathcal{H}_{P,Q}:=Q(D)+P(x),
\]
where $Q(\xi)$ and $P(x)$ are suitable real-valued polynomials. Throughout the paper, if $m\geq 1$ is an integer, we denote by $\mathcal{P}_{2m}$ the class of real-valued polynomials $P$ on $\Rn$ satisfying
\[
\liminf_{|x|\rightarrow\infty}\frac{P(x)}{|x|^{2m}}>0.
\]
Thus, we shall work with operators
\[
\mathcal{H}_{P,Q}=Q(D)+P(x),
\qquad
Q\in \mathcal{P}_{2\ell},\quad P\in \mathcal{P}_{2k},
\qquad
k,\ell\geq 1.
\]
Since the polynomials $P$ and $Q$ are bounded from below, the operator $\mathcal H_{P,Q}$ is bounded from below. Whenever necessary, one may add a sufficiently large constant $r_0\geq0$ in order to obtain a strictly positive self-adjoint realization. This harmless shift does not affect the symbolic or mapping properties under consideration; it only changes the bottom of the spectrum. Therefore, for notational simplicity, we write $\mathcal H_{P,Q}^s$ for the fractional powers of the positive realization of the anharmonic oscillator.

The polynomial structure of $P$ and $Q$ naturally gives rise to a Weyl--H\"ormander metric on phase space. Indeed, for suitable constants $p_0,q_0>0$, one associates with the pair $(P,Q)$ the metric
\[
g^{(P,Q)}
=
\frac{dx^2}{(p_0+q_0+P(x)+Q(\xi))^{1/k}}
+
\frac{d\xi^2}{(p_0+q_0+P(x)+Q(\xi))^{1/\ell}},
\]
which was shown in \cite{anh:cdr} to be a H\"ormander metric. The corresponding symbol classes may be described intrinsically as follows: for $m\in\mathbb{R}$, we say that $a\in \Sigma_{P,Q}^m$
if, for all multi-indices $\alpha,\beta$, there exists a constant $C_{\alpha,\beta}>0$ such that
\[
|\partial_x^\beta\partial_\xi^\alpha a(x,\xi)|
\leq
C_{\alpha,\beta}
(q+P(x)+Q(\xi))^{\frac m2-\frac{|\beta|}{2k}-\frac{|\alpha|}{2\ell}},
\]
for a sufficiently large $q>0$. This class is precisely adapted to the anharmonic oscillator $\mathcal{H}_{P,Q}$ and to its fractional powers.  Among the most extensively studied examples is the classical harmonic oscillator and its higher-order anharmonic generalizations of the form $H=(-\Delta)^{\ell}+|x|^{2k}.$
Such operators, usually referred to as anharmonic oscillators, arise naturally as Hamiltonians of quantum particles confined by polynomial potentials and also appear in the study of molecular vibrational dynamics \cite{Folland}. From the mathematical point of view, the spectral asymptotics of these operators were analyzed by B. Helffer and D. Robert in \cite{hr:sap3, hr:anosc2, hr:anosc}. Hamiltonians with polynomial potentials have since attracted considerable attention in spectral theory, microlocal analysis, and mathematical physics. More recently, one-dimensional anharmonic oscillators with nonsmooth potentials have also been investigated, revealing new phenomena and analytical difficulties that do not appear in the classical smooth setting \cite{sikora:a1}. Moreover, the spectral properties of anharmonic oscillators on $\mathbb{R}^n$ were studied in \cite{anh:cdr,Rober}. A significant development in this direction was obtained in \cite{anh:cdr2}, where a broad class of operators of the form $Q(D)+P(x),$
with smooth symbols $Q(\xi)$ and potentials $P(x)$ satisfying suitable growth assumptions, was studied within the framework of the Weyl--H\"ormander calculus. For further developments and recent results concerning anharmonic oscillators, we refer to \cite{CK,CK23,Carmona1}.

As discussed above, the study of heat semigroups generated by such operators plays a fundamental role in modern analysis and mathematical physics. These semigroups provide an indispensable framework for understanding regularity, smoothing effects, spectral properties, decay estimates, and the long-time behavior of solutions to linear and nonlinear evolution equations. Among the most classical examples is the heat semigroup $e^{-t\Delta}$ generated by the Laplacian, which has been extensively studied in partial differential equations, harmonic analysis, and mathematical physics \cite{Grafa}. In contrast, heat semigroups generated by harmonic and anharmonic oscillators exhibit substantially different behavior because of the presence of a confining potential. In particular, the spectrum becomes discrete and the large-time decay is exponential, rather than purely polynomial as in the case of the free heat semigroup.

Modulation spaces were introduced by H. Feichtinger in 1983; see \cite{feich:mod}. Since then, they have been extensively developed and have become an important tool in time-frequency analysis. The theory was later extended to the full quasi-Banach range $0<p,q\leq \infty$ in \cite{G}. For a historical account of the development of modulation spaces and a detailed overview of the literature, we refer the reader to Feichtinger's survey \cite{feich:hist}. Modulation spaces have found numerous applications in the study of linear and nonlinear partial differential equations, particularly in problems where phase-space localization and time-frequency methods play a central role. We refer to \cite{RSW12} for a survey of applications to nonlinear evolution equations, and to \cite{cp:cp1} for recent developments concerning nonlinear PDEs with initial data in modulation spaces.

The phase space analysis of the Hermite semigroup and its fractional powers was carried out in \cite{than:hs}, where fixed-time estimates on modulation spaces were established for the semigroup $e^{-tH^\beta}$, $H=-\Delta+|x|^2$, $\beta>0$, together with applications to nonlinear global well-posedness. Subsequently, fixed-time $L^p$-$L^q$ estimates in Lebesgue spaces for the fractional harmonic oscillator were obtained in \cite{than:hs1}, where nonlinear fractional heat equations driven by $H^\beta$ were also studied. These works are particularly relevant to the present paper, since they show that even in the harmonic case the fractional heat propagator is not a Fourier multiplier, and therefore the usual Fourier-analytic arguments for the fractional heat semigroup $e^{-t(-\Delta)^\beta}$ cannot be directly applied.

The present work extends this line of investigation from the harmonic oscillator to a much broader class of anharmonic oscillators. In particular, instead of the isotropic quadratic Hamiltonian $-\Delta+|x|^2,$ we consider general polynomial Hamiltonians $\mathcal H_{P,Q}=Q(D)+P(x),
\,
Q\in\mathcal P_{2\ell}, \,\, P\in\mathcal P_{2k}.$
This class includes the model higher-order anharmonic oscillator $(-\Delta)^\ell+|x|^{2k},$
but also allows much more general polynomial symbols and potentials. In this setting, the natural symbolic framework is no longer the isotropic Shubin calculus associated with $|x|^2+|\xi|^2$, but rather the anisotropic Weyl--H\"ormander calculus determined by the polynomial pair $(P,Q)$.

 The modulation-space estimates obtained for the  fractional harmonic oscillator in \cite{than:hs} were extended in \cite{Cardona} to a class of anharmonic operators of the form $(-\Delta)^l+|x|^{2k}$
on $\mathbb{R}^n$ in the framework of modulation spaces. More recently, the authors in \cite{Dolai, Dolai2} broadened this direction by considering weighted modulation spaces and a larger class of operators of the form $A(D)+V(x)$ using the similar techniques as in \cite{than:hs1,  Cardona} and tools developed in \cite{ Cardona} for anisotropic setting.
{\it We emphasize that our assumptions are weaker than the strict homogeneity that those assumed in \cite{Dolai, Dolai2}}. Indeed, in their work, the anharmonic oscillator is defined through symbols and potentials $A(\xi)$ and $V(x)$ which are strictly positive homogeneous polynomials of degrees $2\ell$ and $2k$, respectively; hence
\[
A(\xi)\asymp |\xi|^{2\ell},
\qquad
V(x)\asymp |x|^{2k}.
\]
Here we only assume the coercivity conditions
\begin{align} \label{corce}
    \liminf_{|\xi|\to\infty}\frac{Q(\xi)}{|\xi|^{2\ell}}>0,
\qquad
\liminf_{|x|\to\infty}\frac{P(x)}{|x|^{2k}}>0.
\end{align}
{\it Thus, every strictly positive homogeneous polynomial of the corresponding degree satisfies our assumptions, whereas the converse is not true.} Our framework, therefore, includes the homogeneous models as special cases and also permits non-homogeneous polynomial symbols and potentials, including lower-order terms.

The present paper complements and extends these works by establishing mapping properties on modulation spaces and sharp $L^p$-$L^q$ time-decay estimates in Lebesgue spaces for fractional heat semigroups generated by general anharmonic oscillators $\mathcal H_{P,Q}^s$, and by applying these estimates to nonlinear fractional heat equations.  Although our work is motivated by \cite{than:hs, Cardona, than:hs1}, the treatment of general anisotropic polynomial Hamiltonians necessitates a significant extension of the analytical framework developed therein.

We first establish the mapping properties of the fractional anharmonic heat semigroup on modulation spaces. This is the natural starting point of our analysis, since the operator $\mathcal H_{P,Q}=Q(D)+P(x)$ is intrinsically described through the phase-space geometry associated with the polynomial pair $(P,Q)$. More precisely, the Weyl--H\"ormander metric $g^{(P,Q)}$ and the corresponding symbol classes $\Sigma_{P,Q}^m$ allow us to treat $\mathcal H_{P,Q}^s$ and $e^{-t\mathcal H_{P,Q}^s}$ within a global pseudo-differential calculus. The modulation-space estimates obtained below are then used as one of the main ingredients in the derivation of the Lebesgue-space $L^p$-$L^q$ estimates.

Our first main result is the following fixed-time estimate on modulation spaces.

\begin{theorem} \label{mainthmest}
Let $s>0$ and let $0<p_1,p_2,q_1,q_2\leq \infty$. Define
\[
\frac{1}{\widetilde p}
:=
\max\left\{
\frac{1}{p_2}-\frac{1}{p_1},0
\right\},
\qquad
\frac{1}{\widetilde q}
:=
\max\left\{
\frac{1}{q_2}-\frac{1}{q_1},0
\right\},
\]
and
\[
\sigma
:=
\frac{n}{2s}
\left(
\frac{1}{k\widetilde p}
+
\frac{1}{\ell\widetilde q}
\right).
\]
Then the fractional anharmonic heat semigroup $e^{-t\APQ^s}$ satisfies, for every $t>0$,
\begin{equation}
\label{mappinganharmonic}
\Vert e^{-t\APQ^{s}} f\Vert_{\M^{p_2,q_2}(\mathbb{R}^n)}
\leq
C(t)
\Vert f\Vert_{\M^{p_1,q_1}(\mathbb{R}^n)},
\end{equation}
where
\begin{equation}
\label{heatest}
C(t)
=
C'
\begin{cases}
t^{-\sigma}, & 0<t\leq 1,\\
e^{-t\lambda_0^s}, & t\geq 1,
\end{cases}
\end{equation}
for some constant $C'>0$. Here $\lambda_0$ denotes the smallest eigenvalue of the anharmonic oscillator $\APQ$.
\end{theorem}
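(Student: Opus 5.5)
The argument splits according to $0<t\le 1$ and $t\ge 1$, and in both regimes we treat $e^{-t\APQ^s}$ as a pseudo-differential operator in the anisotropic calculus attached to $g^{(P,Q)}$.

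\emph{Symbolic step.} We first use the fact, stated in the abstract and taken as established, that $\APQ^s$ has a symbol in $\Sigma_{P,Q}^{2s}$ whose principal part is comparable to $a(x,\xi)^s:=(q+P(x)+Q(\xi))^s$. Because $g^{(P,Q)}$ is a H\"ormander metric \cite{anh:cdr}, the holomorphic functional calculus (or Beals' characterisation) then shows that, for $0<t\le1$, the operator $e^{-t\APQ^s}$ has a Weyl symbol $b_t$ with uniform bounds. These take the form
\[
|\partial_x^\beta\partial_\xi^\alpha b_t(x,\xi)|\le C_{\alpha\beta}\,e^{-c\,t\,a(x,\xi)^s}\,a(x,\xi)^{-\frac{|\beta|}{2k}-\frac{|\alpha|}{2\ell}},
\]
with constants independent of $t\in(0,1]$. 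Multiplying by $(t\,a^s)^N$ and absorbing the factor into the exponential gives a second bound: for every $N\ge0$, $t^N b_t\in\Sigma_{P,Q}^{-2sN}$ uniformly in $t$.

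\emph{Small time, $0<t\le1$.} We factor the operator as
\[
e^{-t\APQ^s}=\bigl(t^{\sigma}\APQ^{s\sigma'}e^{-t\APQ^s}\bigr)\circ \APQ^{-s\sigma'}\cdot t^{-\sigma},
\]
with $\sigma'$ chosen so that the orders match. Here $t^\sigma\APQ^{s\sigma'}e^{-t\APQ^s}$ has a symbol bounded uniformly in $\Sigma^{0}_{P,Q}$. Uniform boundedness on every $\M^{p,q}$ then follows from the modulation-space boundedness of order-zero operators in this calculus, via Sj\"ostrand-type results for symbols in $S^0_{0,0}$.

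The core is to show that $\APQ^{-\mu}$, with $\mu$ of order $2s\sigma$, maps $\M^{p_1,q_1}\to\M^{p_2,q_2}$. We would prove this with the anisotropic embedding used in \cite{Cardona}: a symbol decaying like $(1+|x|^{2k}+|\xi|^{2\ell})^{-\mu/2}$ lies in $L^{\widetilde p}$ in $x$ and $L^{\widetilde q}$ in $\xi$ precisely when
\[
\mu>\frac{n}{k\widetilde p}+\frac{n}{\ell\widetilde q},
\]
using the coercivity \eqref{corce} to compare $q+P+Q$ with $1+|x|^{2k}+|\xi|^{2\ell}$. The resulting operator then carries $\M^{p_1,q_1}$ into $\M^{p_2,q_2}$ by the H\"older/Young inequality for the short-time Fourier transform.

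The difficulty at the critical exponent is that this embedding needs a strict inequality while the theorem asserts exactly $t^{-\sigma}$. To recover the sharp power we would instead estimate the kernel of the STFT-matrix of $e^{-t\APQ^s}$ directly, by rescaling $x\to t^{-1/(2sk)}x$ and $\xi\to t^{-1/(2s\ell)}\xi$. We then compute $\|e^{-c t a^s}\|_{L^{\widetilde p}_xL^{\widetilde q}_\xi}\lesssim t^{-\sigma}$, the scaling in $x$ producing $t^{-n/(2sk\widetilde p)}$ and the scaling in $\xi$ producing $t^{-n/(2s\ell\widetilde q)}$. The derivative losses are controlled because the metric is slowly varying, and we take $N$ large in the Gabor matrix decay. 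This scaling and kernel-decay step is the main obstacle, since $P$ and $Q$ are not homogeneous. We would handle it by working with $a$ itself, splitting phase space into $\{a\le t^{-1/s}\}$ and its complement and using only the two-sided comparison with $|x|^{2k}+|\xi|^{2\ell}$.

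\emph{Large time, $t\ge1$.} We write
\[
e^{-t\APQ^s}=e^{-\frac12\APQ^s}\,e^{-(t-1)\APQ^s}\,e^{-\frac12\APQ^s}.
\]
The left factor maps $L^2\to\M^{p_2,q_2}$ and the right factor maps $\M^{p_1,q_1}\to L^2$; both follow from the small-time argument, noting that $e^{-\frac12\APQ^s}$ has a symbol in $\Sigma^{-\infty}_{P,Q}$, hence Schwartz, and so maps all modulation spaces into $\mathcal S$ and, by duality, into $L^2$. The middle factor is bounded on $L^2$ by $e^{-(t-1)\lambda_0^s}$, by the spectral theorem, since $\APQ$ has discrete spectrum with bottom $\lambda_0$. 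Combining the three factors gives $C'e^{-t\lambda_0^s}$.

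For $p_i,q_i<1$ we expect the same scheme to go through, using the quasi-Banach versions of the boundedness results in \cite{G}.
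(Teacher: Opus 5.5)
Your large-time argument uses the same mechanism as the paper, which defers this case to earlier work: pass through $H^{\pm m}_{P,Q}$ via the embeddings \eqref{embeddings} and apply the spectral theorem. Sandwiching $e^{-(t-1)\APQ^s}$ between two copies of $e^{-\frac12\APQ^s}$ is that idea, so the real comparison is about small time.

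There, factoring through the fixed operator $\APQ^{-s\sigma}$ cannot reach the endpoint, as you yourself note. Everything therefore rests on your fallback: a Gabor-matrix bound for $e^{-t\APQ^s}$ with a $t$-dependent phase-space weight, followed by H\"older and Young in mixed norms. That can be made to work, but not literally with $e^{-cta^s}$.
\begin{itemize}
\item The Gabor matrix of a Weyl operator sees the weight at the midpoint $(z+w)/2$. Moving it to the output point $w$, against only polynomial off-diagonal decay, needs uniform polynomial moderateness. The weight $(1+ta^s)^{-N}$ has this, since $1+ta(X+Y)^s\lesssim\langle Y\rangle^{c}(1+ta(X)^s)$ for all $t>0$; the weight $e^{-cta^s}$ does not.
\item The polynomial weight is exactly what the cited heat-parametrix result gives, namely that $t^Nu_t$ is bounded in $\Sigma^{-2sN}_{P,Q}$ uniformly for $0<t\le 1$. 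Your uniform exponential bound on the full symbol is stronger than that result and is not needed.
\item You would also have to discretize the Young step with Gabor frames when $p_1<1$ or $q_1<1$.
\end{itemize}

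The paper avoids kernel analysis altogether, and uses only the uniform bounds you already wrote down. Applying Theorem \ref{modb12} to $t^Nu_t$ for $N=0$ and for one large $N$ gives
\[
\Vert (1+t^Nv^{2sN})\,V_g(e^{-t\APQ^s}f)\Vert_{L^{p_1,q_1}}\le C\Vert f\Vert_{\M^{p_1,q_1}},\qquad 0<t\le 1 .
\]
After reducing to $p_2\le p_1$, $q_2\le q_1$ by embeddings, mixed-norm H\"older (valid for all $0<p,q\le\infty$) leaves only the bound $\Vert(1+t^Nv^{2sN})^{-1}\Vert_{L^{\widetilde p,\widetilde q}}\lesssim t^{-\sigma}$. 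This follows from the rescaling $X=t^{1/(2sk)}x$, $\Xi=t^{1/(2s\ell)}\xi$.

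Because $N$ is free, integrability is never borderline, so the endpoint problem that defeated your factorization never arises. Only the lower bound $v\gtrsim 1+|x|^k+|\xi|^\ell$ is used. Hence the non-homogeneity of $P,Q$, which you flagged as the main obstacle, is harmless, and no splitting of phase space is needed.

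Your route would give finer information, namely pointwise phase-space localization of the semigroup. The paper's route is much shorter, covers the quasi-Banach range uniformly, and delegates all off-diagonal decay to Toft's boundedness theorem through Theorem \ref{modb12}.
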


Theorem \ref{mainthmest} shows that the fractional anharmonic heat semigroup has two distinct regimes. For small time, it exhibits phase-space smoothing governed by the anisotropic parameters $k$ and $\ell$, which encode the growth of the potential $P(x)$ and the symbol $Q(\xi)$, respectively. For large time, the decay is exponential and is determined by the bottom of the spectrum, namely by the factor $e^{-t\lambda_0^s}.$

The modulation-space estimate above extends the known phase-space estimates for the Hermite semigroup and its fractional powers to a substantially broader class of anharmonic oscillators. In particular, in the harmonic case $H=-\Delta+|x|^2$, the underlying phase-space geometry is isotropic and is described by the Shubin weight $1+|x|+|\xi|$. In contrast, for the general anharmonic oscillator $\mathcal H_{P,Q}=Q(D)+P(x)$, the natural weight is equivalent to $(q+P(x)+Q(\xi))^{1/2},$
which reflects the anisotropic growth of the polynomial Hamiltonian. Thus Theorem \ref{mainthmest} may be viewed as a phase-space extension of the fractional Hermite theory \cite{than:hs} to the Weyl--H\"ormander setting associated with general polynomial Hamiltonians.

Although $L^p$-$L^q$ estimates for semigroups have been extensively investigated in various settings over the last several decades, we briefly recall those most closely related to the present work. We begin with the fractional heat semigroup $e^{-t(-\Delta)^\beta}$; see \cite{MY, PV, We}. Its mapping properties are well understood, owing to its central role in the analysis of nonlocal partial differential equations and in numerous models arising in mathematical physics. Fixed-time decay estimates for heat semigroups associated with the Hermite operator have been obtained in \cite{than:hs1} in Lebesgue spaces. We also note that the $L^p$-$L^q$ boundedness of spectral multipliers and Fourier multipliers associated with anharmonic oscillators was studied in \cite{CK} and \cite{CK23}, respectively. Furthermore, $L^p$-$L^q$ boundedness results for pseudo-differential operators and Fourier multipliers in various frameworks can be found in \cite{CKRT20, CKR23, CDKR23, RT24, ANR19, AR19, KumarPA25, KR23} and the references therein. We also refer to \cite{TS, Bre, Nis, SRR,Jia} and the references therein for $L^p$-$L^q$ type estimates for wave equations.

The second main objective of this paper is to establish sharp $L^p$-$L^q$ time-decay estimates for the fractional heat semigroup $e^{-t\mathcal{H}_{P,Q}^{s}},
\, s>0,$
in the full range $1\leq p,q\leq\infty$. The last objective is to apply these estimates to nonlinear heat equations associated with $\mathcal{H}_{P,Q}^s$, both in Lebesgue spaces and in modulation spaces.


Among the most classical examples is the heat semigroup generated by the Laplacian. Its heat kernel is explicitly given by
\[
G_t(x,y)
=
(4\pi t)^{-\frac n2}
\exp\left(-\frac{|x-y|^2}{4t}\right),
\]
and this formula yields the standard estimate
\[
\Vert e^{t\Delta}f\Vert_{L^q(\mathbb{R}^n)}
\leq
C t^{-\frac n2(\frac1p-\frac1q)}
\Vert f\Vert_{L^p(\mathbb{R}^n)},
\qquad
1\leq p\leq q\leq\infty.
\]
For the fractional heat semigroup $e^{-t(-\Delta)^s}$, the corresponding estimate becomes
\[
\Vert e^{-t(-\Delta)^s}f\Vert_{L^q(\mathbb{R}^n)}
\leq
C t^{-\frac n{2s}(\frac1p-\frac1q)}
\Vert f\Vert_{L^p(\mathbb{R}^n)},
\]
whereas for the higher-order heat semigroup $e^{-t(-\Delta)^\ell}$ one obtains
\[
\Vert e^{-t(-\Delta)^\ell}f\Vert_{L^q(\mathbb{R}^n)}
\leq
C t^{-\frac n{2\ell}(\frac1p-\frac1q)}
\Vert f\Vert_{L^p(\mathbb{R}^n)}.
\]
In these homogeneous cases, the decay rate is dictated by the natural parabolic scaling of the underlying operator.

The situation is substantially different for harmonic and anharmonic oscillators. A basic model is the harmonic oscillator $H=-\Delta+|x|^2,$
whose heat kernel is explicitly described by the Mehler formula. This already reveals a fundamental difference from the free heat equation: the confining quadratic potential produces a discrete spectrum and exponential large-time decay. The fixed-time $L^p$-$L^q$ estimates for the fractional harmonic oscillator were established in \cite{than:hs1}, where nonlinear fractional heat equations driven by $H^\beta$ were also investigated. The present paper extends this line of investigation from the harmonic oscillator to a broad class of anharmonic oscillators. Instead of the isotropic quadratic Hamiltonian
$-\Delta+|x|^2,$ we consider general polynomial Hamiltonians
\[
Q(D)+P(x),
\qquad
Q\in\mathcal P_{2\ell},\quad P\in\mathcal P_{2k}.
\]
In particular, our results cover the higher-order anharmonic model $(-\Delta)^\ell+|x|^{2k},
\,
\ell,k\in\mathbb N,$
as well as much more general polynomial symbols and potentials and novel even in these particular setting. 

An important spectral feature of the operators considered here is that, after the harmless positivity shift described above, $\mathcal{H}_{P,Q}$ has purely discrete spectrum. We write its eigenvalues as
\[
0<\lambda_0\leq \lambda_1\leq \lambda_2\leq \cdots,
\qquad
\lambda_j\rightarrow\infty.
\]
The eigenfunctions form an orthonormal basis of $L^2(\Rn)$ and belong to $\mathcal{S}(\Rn)$. Moreover, the spectral asymptotics obtained in \cite{anh:cdr} give
\[
\lambda_j\sim C_{k,\ell}j^{\frac{2k\ell}{n(k+\ell)}},
\qquad
j\rightarrow\infty.
\]
The smallest eigenvalue $\lambda_0$ plays a decisive role in the large-time behavior of the heat semigroup. Indeed, by the spectral theorem,
\[
e^{-t\mathcal{H}_{P,Q}^s}f
=
\sum_{j=0}^{\infty}
e^{-t\lambda_j^s}P_jf,
\]
and the factor $e^{-t\lambda_0^s}$ is the slowest exponential decay rate appearing in this expansion. Thus, the bottom of the spectrum determines the optimal large-time exponential decay of the semigroup. In the harmonic case $H=-\Delta+|x|^2$ on $\mathbb R^d$, the bottom of the spectrum is explicit and equals $d$, leading to the decay factor $e^{-td^\beta}$ for $e^{-tH^\beta}$. In the present general anharmonic setting, the bottom of the spectrum is not explicit in general, and the sharp large-time decay is governed by $e^{-t\lambda_0^s}$.\\

We now state our second and most significant result on $L^p$-$L^q$ time-decay.

\begin{theorem}
\label{Lpmapping}
Let $s>0$. Then the following estimates hold.

\begin{itemize}
\item If $t>1$, then, for all $p,q\in[1,\infty]$,
\begin{equation}
\label{lpest}
    \Vert e^{-t\APQ^s}f\Vert_{L^q(\mathbb R^n)}
    \leq
    C e^{-t\lambda_0^s}
    \Vert f\Vert_{L^p(\mathbb R^n)}.
\end{equation}

\item If $0<t\leq1$, then
\begin{equation}
\label{lpest1}
    \Vert e^{-t\APQ^s}f\Vert_{L^q(\mathbb R^n)}
    \leq
    C t^{-\sigma_s}
    \Vert f\Vert_{L^p(\mathbb R^n)},
\end{equation}
where $\sigma_s$ is given as follows:
\begin{itemize}
\item[(i)] If $p,q\in(1,\infty)$, then
\[
    \sigma_s=
    \begin{cases}
    \displaystyle
    \frac{n}{2k s}
    \left(\frac1q-\frac1p\right),
    & q\leq p,\\[3mm]
    \displaystyle
    \frac{n}{2\ell s}
    \left(\frac1p-\frac1q\right),
    & p\leq q.
    \end{cases}
\]

\item[(ii)] If $p=1$ and $q=\infty$, then $ \sigma_s=\frac{n}{2\ell s}.$

\item[(iii)] If $p=1$ and $2\leq q<\infty$, then $ \sigma_s=
    \frac{n}{2\ell s}
    \left(1-\frac1q\right).$

\item[(iv)] If $1<p<\infty$ and $q=1$, then $ \sigma_s=
    \frac{n}{2k s}
    \left(1-\frac1p\right).$
\end{itemize}
\end{itemize}
\end{theorem}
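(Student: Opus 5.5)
The plan is to deduce the Lebesgue estimates from the modulation-space estimate of Theorem \ref{mainthmest}. The bridge is provided by the standard embeddings between Lebesgue and modulation spaces:
\[
\M^{p,\min(p,p')}\hookrightarrow L^p\hookrightarrow \M^{p,\max(p,p')},\qquad 1\le p\le\infty,
\]
with the usual modifications at the endpoints ($L^1\hookrightarrow \M^{1,\infty}$ and $\M^{\infty,1}\hookrightarrow L^\infty$). For $f\in L^p$ we will write
\[
\Vert e^{-t\APQ^s}f\Vert_{L^q}\lesssim \Vert e^{-t\APQ^s}f\Vert_{\M^{q,\min(q,q')}}\lesssim C(t)\,\Vert f\Vert_{\M^{p,\max(p,p')}}\lesssim C(t)\,\Vert f\Vert_{L^p}.
\]
Here Theorem \ref{mainthmest} is applied with $p_1=p$, $q_1=\max(p,p')$, $p_2=q$, $q_2=\min(q,q')$. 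For $t\ge1$ this immediately gives the factor $e^{-t\lambda_0^s}$ for every pair $p,q\in[1,\infty]$, which proves \eqref{lpest}.

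As a check, or as an alternative route for large time, one can use the spectral theorem. Write $e^{-t\APQ^s}=e^{-(t-1)\APQ^s}e^{-\APQ^s}$. Then $e^{-\APQ^s}$ maps $L^p$ into $L^2$ and $L^2$ into $L^q$, because its kernel is Schwartz; this follows from $e^{-\APQ^s}\in\Sigma_{P,Q}^{-\infty}$, which we get from the calculus. The middle factor satisfies $\Vert e^{-(t-1)\APQ^s}\Vert_{L^2\to L^2}=e^{-(t-1)\lambda_0^s}$.

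For $0<t\le1$ the exponent produced by the chain above is
\[
\sigma=\frac{n}{2s}\Bigl(\frac1{k\widetilde p}+\frac1{\ell\widetilde q}\Bigr),\qquad \frac1{\widetilde p}=\Bigl(\frac1q-\frac1p\Bigr)_+,\quad \frac1{\widetilde q}=\Bigl(\frac1{\min(q,q')}-\frac1{\max(p,p')}\Bigr)_+.
\]
This is generally not the claimed $\sigma_s$, since the second index jumps by $\frac1{\min(q,q')}-\frac1{\max(p,p')}$, which can be nonzero even when $p=q$. So we cannot stop at the embedding argument; we must exploit the freedom in the choice of indices.

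The key idea is to use the modulation-space estimate only in the two extreme directions and to interpolate. For $q\le p$ only the $x$-scale should appear. We estimate $L^p\to L^q$ via $L^p\to\M^{q,\cdot}$ while keeping the frequency index unchanged. Alternatively, we note that $e^{-t\APQ^s}$ is bounded on every $L^r$ uniformly for $t\le1$, and combine this with H\"older-type localization at the spatial scale $t^{1/(2ks)}$.

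For $p\le q$ the situation is dual, and the scale is $t^{1/(2\ell s)}$. The endpoint $L^1\to L^\infty$ with $\sigma_s=n/(2\ell s)$ is handled directly. We use the kernel bound $|K_t(x,y)|\lesssim t^{-n/(2\ell s)}$, which we will obtain from the symbol of $e^{-t\APQ^s}$. Indeed, that symbol is essentially $e^{-t a_s(x,\xi)}$ with $a_s\in\Sigma_{P,Q}^{2s}$, and $a_s\gtrsim Q(\xi)^s\gtrsim|\xi|^{2\ell s}$. Integrating in $\xi$ gives $\int e^{-ct|\xi|^{2\ell s}}\,d\xi\sim t^{-n/(2\ell s)}$.

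Cases (i) with $p\le q$ and (iii) then follow from Riesz--Thorin interpolation. We interpolate between the $L^1\to L^\infty$ bound, the uniform $L^r\to L^r$ bounds, and the $L^1\to L^2$ bound obtained by duality and a $TT^*$-type argument from $L^1\to L^\infty$ at time $t/2$. Cases (i) with $q\le p$ and (iv) follow from the symmetric argument with the roles of $x$ and $\xi$ exchanged: since $P(x)\gtrsim|x|^{2k}$, the kernel has spatial decay that makes $L^p\to L^q$ for $q<p$ possible with loss $t^{-\frac{n}{2ks}(\frac1q-\frac1p)}$. This uses H\"older's inequality with the weight $e^{-ctP(x)^s}$ and the bound $\Vert e^{-ct|x|^{2ks}}\Vert_{L^r}\sim t^{-n/(2ksr)}$.

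The main obstacle is justifying that $e^{-t\APQ^s}$ behaves like multiplication by the Gaussian-type factor $e^{-ctP(x)^s}$ in $x$ (respectively $e^{-ctQ(\xi)^s}$ in $\xi$), uniformly for $t\le1$. This requires uniform symbol estimates for $e^{-ta_s}$ in the class $\Sigma^0_{P,Q}$ with an extra factor, for which we would try a parametrix or functional-calculus argument. The uniform $L^r$ boundedness also needs a kernel $L^1$ bound, since $\Sigma^0$ operators are not bounded on $L^1$ or $L^\infty$. This is what restricts the endpoint cases to those listed in the theorem.
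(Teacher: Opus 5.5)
Your proposal follows essentially the same route as the paper. For large time, both use Lebesgue--modulation embeddings together with Theorem \ref{mainthmest}; the paper uses the cruder $L^p\hookrightarrow\M^{p,\infty}$ and $\M^{q,1}\hookrightarrow L^q$. For $0<t\le1$, both use the heat parametrix symbol to get the kernel bound $\int e^{-ct|\xi|^{2\ell s}}\,d\xi\lesssim t^{-n/(2\ell s)}$, the weight $e^{\frac t4\langle x\rangle^{2ks}}$ with uniform $S^0_{1,0}$ bounds plus H\"older for $q\le p$, Riesz--Thorin for $p\le q$, and the $TT^*$ argument for (iii). The only aside that cannot work is ``keeping the frequency index unchanged'' in the modulation route, since $\frac{1}{\min(q,q')}-\frac{1}{\max(p,p')}>0$ unless $p=q=2$; the H\"older-localization alternative you give in its place is exactly the paper's argument.
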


We emphasize that Theorem \ref{Lpmapping} appears to be new even for the model anharmonic oscillator $(-\Delta)^\ell+|x|^{2k},
\,\,
\ell,k\in\mathbb{N}.$
Although heat semigroups associated with the harmonic oscillator and with certain anharmonic operators have been studied previously \cite{than:hs1, CK, CK23}, sharp $L^p$-$L^q$ estimates for the fractional semigroup $e^{-t\left((-\Delta)^\ell+|x|^{2k}\right)^s}$
in the full range $1\leq p,q\leq\infty$ do not seem to have been available in this generality.  Thus, our result extends the known theory from the harmonic oscillator to higher-order anharmonic oscillators and from integer-order heat flows to fractional powers of such operators.  The large-time estimate \eqref{lpest} is sharp. Indeed, since $\lambda_0$ is the smallest eigenvalue, the term $e^{-t\lambda_0^s}$ is the slowest exponential decay rate in the spectral expansion. Hence, in general, no faster exponential decay can hold. This feature is in sharp contrast with the free heat semigroup generated by $(-\Delta)^s$ or $(-\Delta)^\ell$ on $\mathbb{R}^n$, where the spectrum is continuous and starts at zero, leading only to polynomial decay in time.

\begin{remark} The exponents in \eqref{lpest1} reflect the separate influence of the coercive growth rates of $P$ and $Q$. Under \eqref{corce}, the exponent $\frac{n}{2ks}$ is associated with the spatial growth of $P$, whereas $\frac{n}{2\ell s}$ is associated with the frequency growth of $Q$. Thus the short-time $L^p$--$L^q$ behaviour of the semigroup retains distinct signatures of the two symbols. In the model case $P(x)\sim |x|^{2k}, \,\, Q(\xi)\sim |\xi|^{2\ell},$ the parameter $k$ measures the strength of spatial confinement, while $\ell$ corresponds to the order of the differential part. For the harmonic oscillator ($k=\ell=1$), these two scales coincide, recovering the classical $L^p$--$L^q$ behaviour (see \cite{than:hs1}). \end{remark}

As an application of the above decay estimates, we study the nonlinear fractional heat equation
\begin{equation}
\label{eq6.1inro}
\begin{cases}
\partial_t u+\APQ^s u=|u|^{\beta-1}u,
\quad (t,x)\in (0,\infty)\times\mathbb{R}^n,\\
u(0,x)=u_0(x),
\end{cases}
\end{equation}
where $\beta>1$, $s>0$ and initial data $u_0$ is taken from the Lebesgue space.

This problem is closely related to the classical Fujita theory. For the standard heat equation
\[
\partial_t u-\Delta u=|u|^{\beta-1}u,
\]
 it was shown by Fujita \cite{Fujita1966} and Hayakawa \cite{Hayakawa1973}, that the critical exponent is $\beta_F=1+\frac{2}{n}.$
If $1<\beta\leq\beta_F$, nontrivial nonnegative solutions blow up in finite time, whereas for $\beta>\beta_F$, global solutions exist for sufficiently small initial data. For the higher-order heat equation
\[
\partial_t u+(-\Delta)^\ell u=|u|^{\beta-1}u,
\qquad
\ell\in\mathbb{N},
\]
the corresponding Fujita exponent is
 $\beta_F=1+\frac{2\ell}{n}.$
Moreover, the scaling-critical Lebesgue exponent is $p_c=\frac{n(\beta-1)}{2\ell}.$
This follows from the scaling invariance
\[
u_\lambda(t,x)
=
\lambda^{\frac{2\ell}{\beta-1}}
u(\lambda^{2\ell}t,\lambda x).
\] We refer to excellent monograph \cite{GMP} for more detailed exposition.
Similarly, for the fractional heat equation
\[
\partial_t u+(-\Delta)^s u=|u|^{\beta-1}u,
\] the Fujita exponent is $\beta_F=1+\frac{2s}{n},$
and the scaling-critical Lebesgue exponent is $p_c=\frac{n(\beta-1)}{2s}.$
We refer to \cite{We,Mio,Ha,Br,MiaoL,KumarTorebek} and the references therein for wellposedness results on (fractional) heat equations.

In the anharmonic setting considered here, the situation is fundamentally different. The operator $\mathcal{H}_{P,Q}=Q(D)+P(x)$
contains both a differential part and a confining polynomial potential. The potential term destroys the dilation symmetry enjoyed by the homogeneous operators $(-\Delta)^\ell$ and $(-\Delta)^s$. Consequently, the nonlinear equation
\[
\partial_t u+\mathcal{H}_{P,Q}^s u=|u|^{\beta-1}u
\]
is not scaling invariant. Therefore, the critical exponent cannot be obtained from a scaling argument. Instead, it is determined by the short-time $L^p$-$L^q$ estimates for the semigroup $e^{-t\mathcal{H}_{P,Q}^s}$.

The decay estimates in Theorem \ref{Lpmapping} show that the effective diffusion parameter in the smoothing regime is governed by $\ell$, which is determined by the polynomial geometry of $P$ and $Q$. This leads naturally to the critical Lebesgue exponent
\[
p_c^s
=
\frac{n(\beta-1)}{2\ell s}.
\]
Thus, $p_c^s$ plays for the nonlinear anharmonic heat equation the same role that $\frac{n(\beta-1)}{2\ell}$
plays for the higher-order heat equation generated by $(-\Delta)^\ell$, and that $\frac{n(\beta-1)}{2s}$
plays for the fractional heat equation generated by $(-\Delta)^s$. Equivalently, from the viewpoint of $L^1$-based Fujita theory, the decay mechanism suggests the Fujita-type threshold
\[
\beta_F
=
1+\frac{2\ell s}{n}.
\]
However, unlike the classical and homogeneous higher-order models, this exponent is not a consequence of scaling invariance. It is instead extracted from the precise semigroup decay estimates associated with the non-homogeneous anharmonic operator.
We emphasize that the nonlinear results obtained here are also new even for the model operator $\left((-\Delta)^\ell+|x|^{2k}\right)^s.$

We first prove local well-posedness in the supercritical Lebesgue regime.

\begin{theorem}
\label{them6.1intro}
Let $1<p<\infty$, $s>0$, $\beta>1$, and $u_0\in L^p(\mathbb{R}^n)$. If
\[
p>p_c^s:=\frac{n(\beta-1)}{2\ell s},
\]
then there exists $T>0$ such that \eqref{eq6.1inro} has a solution $u\in C([0,T],L^p(\mathbb{R}^n)).$
Moreover, $u$ extends to a maximal interval $[0,T_{\max})$ such that either $T_{\max}=\infty$, or $T_{\max}<\infty$ and $\lim_{t\rightarrow T_{\max}}
\Vert u(t)\Vert_{L^p(\mathbb{R}^n)}
= \infty.$
\end{theorem}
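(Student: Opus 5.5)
We will prove Theorem \ref{them6.1intro} with a Weissler-type fixed point argument in an auxiliary weighted space, based on the short-time $L^p$--$L^q$ estimates of Theorem \ref{Lpmapping}. The problem \eqref{eq6.1inro} is rewritten in Duhamel form,
\[
u(t)=\Phi(u)(t):=e^{-t\APQ^s}u_0+\int_0^t e^{-(t-\tau)\APQ^s}\big(|u|^{\beta-1}u\big)(\tau)\,d\tau .
\]
Since $p>p_c^s$, we have $p>1$. We first take the easier route $p\ge\beta$ (when $p<\beta$ see below). The nonlinearity maps $L^p$ into $L^{p/\beta}$, and case (i) of Theorem \ref{Lpmapping} with $p/\beta\le p$ gives
\[
\|e^{-t\APQ^s}g\|_{L^p}\le Ct^{-\frac{n(\beta-1)}{2\ell s p}}\|g\|_{L^{p/\beta}},\qquad 0<t\le1,
\]
provided $p/\beta>1$. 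The exponent $\gamma:=\frac{n(\beta-1)}{2\ell s p}$ is $<1$ exactly because $p>p_c^s$, so the singularity is integrable. The contraction is then run in the ball
\[
B_{T,M}=\Big\{u\in C([0,T],L^p):\ \sup_{t\le T}\|u(t)\|_{L^p}\le M\Big\},\qquad M=2C\|u_0\|_{L^p}.
\]
Here we use $\|e^{-t\APQ^s}u_0\|_{L^p}\le C\|u_0\|_{L^p}$ (case $p=q$, $\sigma_s=0$), and the elementary bound $\big||u|^{\beta-1}u-|v|^{\beta-1}v\big|\le C(|u|^{\beta-1}+|v|^{\beta-1})|u-v|$ combined with H\"older. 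This gives
\[
\|\Phi(u)-\Phi(v)\|_{L^\infty_TL^p}\le CT^{1-\gamma}M^{\beta-1}\|u-v\|_{L^\infty_TL^p},
\]
which is a contraction for small $T$, with $T$ depending only on $\|u_0\|_{L^p}$.

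In the general case (in particular $p<\beta$, where $p/\beta<1$ is not admissible), we use the Weissler auxiliary norm. Choose $r>\max(p,\beta)$ with $\frac{n}{2\ell s}(\frac1p-\frac1r)<\frac1\beta$, and set $\alpha=\frac{n}{2\ell s}(\frac1p-\frac1r)$. The working space is
\[
X_T=\Big\{u:\ \sup_{0<t\le T}t^{\alpha}\|u(t)\|_{L^r}<\infty\Big\}.
\]
The linear part satisfies $t^\alpha\|e^{-t\APQ^s}u_0\|_{L^r}\le C\|u_0\|_{L^p}$ by Theorem \ref{Lpmapping}(i). For the Duhamel term we estimate $L^{r/\beta}\to L^r$ (if $r/\beta>1$), which produces the integral $\int_0^t(t-\tau)^{-\frac{n(\beta-1)}{2\ell s r}}\tau^{-\alpha\beta}\,d\tau$. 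This is a Beta integral, finite and equal to $C\,t^{1-\frac{n(\beta-1)}{2\ell sr}-\alpha\beta}$, provided $\alpha\beta<1$ and $\frac{n(\beta-1)}{2\ell s r}<1$. Its exponent is $-\alpha+\big(1-\frac{n(\beta-1)}{2\ell s p}\big)$, so the factor $T^{1-\gamma}$ with $\gamma<1$ again gives smallness. The choice of $r$ has to satisfy all these constraints simultaneously, and this parameter bookkeeping is the main obstacle. A further complication is that $p=1$ or $q=\infty$ endpoints are only partially covered by Theorem \ref{Lpmapping}, so all exponents must be kept in $(1,\infty)$. A second delicate point is continuity into $L^p$ at $t=0$. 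This uses strong continuity of $e^{-t\APQ^s}$ on $L^p$, which would be deduced from the $L^p$ uniform bound and density of Schwartz functions (eigenfunction expansions). For the $C([0,T],L^p)$ bound of the Duhamel term we use $L^{r/\beta}\to L^p$ estimates, again with an integrable singularity.

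Finally, extension to a maximal interval is standard. The existence time depends only on an upper bound for $\|u_0\|_{L^p}$, and uniqueness holds in the class constructed. If $T_{\max}<\infty$ but $\|u(t)\|_{L^p}$ stayed bounded along a sequence $t_j\to T_{\max}$, restarting from $u(t_j)$ with a uniform existence time would extend the solution beyond $T_{\max}$, a contradiction. To upgrade this to the full limit $\lim_{t\to T_{\max}}\|u(t)\|_{L^p}=\infty$, note that the same argument shows the norm cannot stay bounded along any sequence approaching $T_{\max}$.
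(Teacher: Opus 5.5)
Your proposal is correct, and its general case is the paper's argument. The paper runs the same Weissler-type contraction in the norm $\max\{\sup_{t<T}\|u(t)\|_{L^p},\ \sup_{t<T} t^{\alpha}\|u(t)\|_{L^{p\beta}}\}$, which is your scheme with the specific choice $r=p\beta$; then $\alpha\beta=\frac{n(\beta-1)}{2p\ell s}<1$ and $r/\beta=p\in(1,\infty)$ hold automatically, and your $L^{r/\beta}\to L^p$ step is just the uniform $L^p$ bound, so the bookkeeping you flag as the main obstacle disappears and the separate case $p>\beta$ is unnecessary.

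Your sequence-based blow-up alternative is slightly more careful than the paper's, which only rules out $\sup_{t<T_{\max}}\|u(t)\|_{L^p}<\infty$ and so strictly yields $\limsup_{t\to T_{\max}}\|u(t)\|_{L^p}=\infty$ rather than the full limit.
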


We next derive a lower bound for the blow-up rate of finite-time blow-up solutions.

\begin{theorem}

Let $1<p<\infty$, $s>0$, $\beta>1$, and $u_0\in L^p(\mathbb R^n)$. Assume that
\[
    p>p_c^s:=\frac{n(\beta-1)}{2\ell s}.
\]
Let $u$ be the maximal solution of \eqref{eq6.1inro} obtained in Theorem \ref{them6.1intro}. If $T_{\max}<\infty$, then
\[
    \Vert u(t)\Vert_{L^p(\mathbb R^n)}
    \geq
    C
    (T_{\max}-t)^{
    \frac{n}{2p\ell s}  -
    \frac{1}{\beta-1}
    }
\]
for all $0\leq t<T_{\max}$.
\end{theorem}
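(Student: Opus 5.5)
The plan is to adapt the standard Weissler-type argument: combine the small-time $L^p$--$L^q$ estimate of Theorem \ref{Lpmapping} (case $p\le q$, with exponent $\frac{n}{2\ell s}(\frac1p-\frac1q)$) with the blow-up alternative of Theorem \ref{them6.1intro}. Fix $t_0\in[0,T_{\max})$ and view $u(t_0)$ as new initial data. The key quantitative input is that the local existence time from the contraction argument depends only on $\Vert u(t_0)\Vert_{L^p}$, and does so through an explicit power. Concretely, I will work in the space
\[
X_T=\Bigl\{v\in C([0,T],L^p):\ \sup_{0<t\le T} t^{\gamma}\Vert v(t)\Vert_{L^{p\beta}}<\infty\Bigr\},
\qquad \gamma=\frac{n}{2\ell s}\Bigl(\frac1p-\frac1{p\beta}\Bigr),
\]
which is presumably the setting used to prove Theorem \ref{them6.1intro}. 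Nothing is lost by this choice, since $T\le1$ and the small-time bound applies.

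First I will estimate the Duhamel map $\Phi(v)(t)=e^{-t\APQ^s}u(t_0)+\int_0^t e^{-(t-\tau)\APQ^s}|v|^{\beta-1}v(\tau)\,d\tau$. The linear part obeys $\Vert e^{-t\APQ^s}u(t_0)\Vert_{L^{p\beta}}\le Ct^{-\gamma}\Vert u(t_0)\Vert_{L^p}$. For the nonlinear part, $\Vert |v|^{\beta-1}v\Vert_{L^p}=\Vert v\Vert_{L^{p\beta}}^\beta$. Applying the semigroup estimate from $L^p$ to $L^{p\beta}$ and from $L^p$ to $L^p$ gives a beta integral
\[
\int_0^t (t-\tau)^{-\gamma}\tau^{-\gamma\beta}\,d\tau = C\,t^{1-\gamma-\gamma\beta}.
\]
This integral is finite exactly when $\gamma\beta<1$, which is equivalent to $p>p_c^s$. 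Hence the weighted norm of the nonlinear term is at most $CT^{1-\gamma\beta}\Vert v\Vert_{X_T}^\beta$. A ball of radius $2C\Vert u(t_0)\Vert_{L^p}$ is therefore mapped into itself, and $\Phi$ is a contraction on it, provided
\[
C\,T^{1-\gamma\beta}\Vert u(t_0)\Vert_{L^p}^{\beta-1}\le c_0 .
\]
A similar bound holds for the $C([0,T],L^p)$ norm, where the exponent $1-\gamma\beta$ is the same. The existence time can thus be taken as
\[
T(t_0)=\min\Bigl\{1,\ c\,\Vert u(t_0)\Vert_{L^p}^{-(\beta-1)/(1-\gamma\beta)}\Bigr\}.
\]

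Second, by uniqueness and maximality, $t_0+T(t_0)\le T_{\max}$, so $T_{\max}-t_0\ge T(t_0)$. Since $\Vert u(t)\Vert_{L^p}\to\infty$ as $t\to T_{\max}$, for $t_0$ near $T_{\max}$ the minimum defining $T(t_0)$ is not the cutoff $1$. This gives
\[
\Vert u(t_0)\Vert_{L^p}\ge C\,(T_{\max}-t_0)^{-(1-\gamma\beta)/(\beta-1)}.
\]
For $t_0$ far from $T_{\max}$, continuity and compactness, together with the fact that the right-hand side is bounded on $[0,T_{\max}-\delta]$, let me adjust the constant. It remains to check the exponent:
\[
\frac{1-\gamma\beta}{\beta-1}=\frac{1}{\beta-1}-\frac{n}{2\ell s p}\cdot\frac{\beta-1}{\beta-1}=\frac1{\beta-1}-\frac{n}{2p\ell s}.
\]
The resulting bound is $(T_{\max}-t)^{\frac{n}{2p\ell s}-\frac1{\beta-1}}$, as claimed.

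The main obstacle is ensuring that the existence time depends only on $\Vert u(t_0)\Vert_{L^p}$ and not on finer properties of the data. In the critical-type space $X_T$ this is automatic. Otherwise it needs care with the continuity at $t=0$ in the weighted norm; we only need existence up to $T(t_0)$, so a direct bound suffices there. One must also use $p\beta\ge p$ to stay in the regime $p\le q$ of Theorem \ref{Lpmapping}(i), and if $p\beta=\infty$ is needed, use case (ii).
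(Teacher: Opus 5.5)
Your proposal is correct and is essentially the paper's argument: the contraction in the weighted space with $\sup_t t^{\frac{n(\beta-1)}{2p\ell s\beta}}\Vert v(t)\Vert_{L^{p\beta}}$ gives an existence time that depends only on $\Vert u(t_0)\Vert_{L^p}$, through the power $1-\frac{n(\beta-1)}{2p\ell s}$, and comparing this time with $T_{\max}-t_0$ gives the rate. The paper writes the same step as the inequality $\Vert u(\tau)\Vert_{L^p}+C_1'M^\beta(T_{\max}-\tau)^{1-\frac{n(\beta-1)}{2p\ell s}}>M$ for all $M>0$, and then takes $M=2\Vert u(\tau)\Vert_{L^p}$.

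Your cutoff $\min\{1,\cdot\}$ is unnecessary, because $e^{-t\lambda_0^s}\le Ct^{-\sigma}$ for $t\ge1$ makes the polynomial $L^p$--$L^q$ bound valid for all $t>0$. Dropping it also removes the compactness step, which otherwise needs $\Vert u(t)\Vert_{L^p}>0$; this holds because $u(t_0)=0$ would force $T_{\max}=\infty$ by uniqueness.
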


We also obtain a global small-data result at the critical exponent.

\begin{theorem}
Let $1<p<\infty$, $s>0$, $\beta>1$, and $u_0\in L^p(\mathbb{R}^n)$. If
\[
p=p_c^s:=\frac{n(\beta-1)}{2\ell s}
\]
and $\Vert u_0\Vert_{L^{p_c^s}(\mathbb{R}^n)}$ is sufficiently small, then $T_{\max}=\infty.$
\end{theorem}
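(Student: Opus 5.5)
The plan is a fixed-point argument for the Duhamel formulation
\[
u(t)=e^{-t\APQ^s}u_0+\int_0^t e^{-(t-\tau)\APQ^s}\big(|u|^{\beta-1}u\big)(\tau)\,d\tau,
\]
carried out in a Kato-type auxiliary space built on a second exponent $r>p$. Only the short-time estimate \eqref{lpest1} of Theorem \ref{Lpmapping} in the range $p\le q$, where $\sigma_s=\frac{n}{2\ell s}(\frac1p-\frac1q)$, is used for $0<t\le1$. The estimate \eqref{lpest} is used for $t>1$.

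\emph{Choice of $r$ and the space.} Since $p=p_c^s>1$, we have $\beta>1+\frac{2\ell s}{n}$, so $p<\beta p$. I would choose $r\in(\beta,\infty)$ with $p<r<\beta p$, which gives $\frac{\beta}{r}<1$. Set
\[
\alpha:=\frac{n}{2\ell s}\Big(\frac1p-\frac1r\Big)\in(0,1).
\]
For $0<t\le1$, Theorem \ref{Lpmapping}(i) gives $\|e^{-t\APQ^s}u_0\|_{L^r}\le Ct^{-\alpha}\|u_0\|_{L^p}$. For $t>1$, \eqref{lpest} gives the bound $Ce^{-t\lambda_0^s}\|u_0\|_{L^p}$. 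Combining the two, for all $t>0$,
\[
\sup_{t>0}\,\langle\!\min(t,1)\rangle^{\alpha}e^{ct}\|e^{-t\APQ^s}u_0\|_{L^r}\le C\|u_0\|_{L^p}
\]
for some $0<c<\lambda_0^s$. I would therefore work in
\[
X=\Big\{u:\ \|u\|_X:=\sup_{t>0}\min(t,1)^{\alpha}e^{ct}\|u(t)\|_{L^r}<\infty\Big\}.
\]

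\emph{Nonlinear estimate.} Apply the semigroup from $L^{r/\beta}$ to $L^r$. This is allowed since $1<\frac r\beta<r$, with exponent $\gamma=\frac{n}{2\ell s}\cdot\frac{\beta-1}{r}$ for short times. Criticality $p=\frac{n(\beta-1)}{2\ell s}$ yields exactly $\gamma+\beta\alpha-\alpha=1-\alpha$ combined with $\gamma<1$, so the short-time integral reduces to $\int_0^t(t-\tau)^{-\gamma}\tau^{-\beta\alpha}d\tau\,\|u\|_X^\beta$. Here one needs $\beta\alpha<1$, which forces $r$ close enough to $p$; the identity $\gamma+\beta\alpha=1-\alpha+\alpha\cdot$ yields $\gamma+\beta\alpha-1=-\alpha$, giving the bound $Ct^{-\alpha}\|u\|_X^\beta$. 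For large times, the exponential factors $e^{-c'(t-\tau)}e^{-\beta c\tau}$ with $\beta c>c$ integrate to $Ce^{-ct}$, provided $c<\lambda_0^s$; near $\tau$ close to $t$ we use the singular bound $(t-\tau)^{-\gamma}$. Together these give $\|\Phi(u)\|_X\le C\|u_0\|_{L^p}+C\|u\|_X^\beta$ and the analogous Lipschitz bound via $||u|^{\beta-1}u-|v|^{\beta-1}v|\lesssim(|u|^{\beta-1}+|v|^{\beta-1})|u-v|$ and Hölder. Contraction on a small ball follows when $\|u_0\|_{L^{p_c^s}}$ is small.

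\emph{Conclusion.} I would then show that this global $X$-solution lies in $C([0,\infty),L^p)$. The same Duhamel estimate from $L^{r/\beta}$ to $L^p$ is allowed since $\frac r\beta<p$ and gives exponent $1-\beta\alpha+\cdots$, which is integrable by the same computation. Uniqueness matches it with the maximal solution of Theorem \ref{them6.1intro}; note that theorem is stated for $p>p_c^s$, so uniqueness needs a short argument or one can run local theory at $p=p_c^s$ in $X$. The $L^p$ norm stays bounded, so $T_{\max}=\infty$.

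The main obstacle is the exponent bookkeeping: simultaneously requiring $\beta\alpha<1$, $\gamma<1$, and $\frac r\beta\ge1$ (or $>1$ so that case (i) of Theorem \ref{Lpmapping} applies). I would first try $r$ slightly larger than $\max(p,\beta)$. If $\beta\ge p\beta$ conflicts arise, I would use case (iii) with $L^1$ instead.
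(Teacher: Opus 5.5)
Your proposal is correct and follows essentially the paper's argument: a contraction in a Kato-type space controlling $\|u(t)\|_{L^r}$ with $p_c^s<r<\beta p_c^s$, using the $L^{r/\beta}\to L^r$ bound with exponent $\gamma=\frac{n(\beta-1)}{2r\ell s}<1$. The paper uses the simpler global weight $t^{\rho}$ with $\rho=\alpha$, since $e^{-t\lambda_0^s}\lesssim t^{-\gamma}$ for $t>1$; your weight $\min(t,1)^{\alpha}e^{ct}$ additionally yields exponential decay, and your requirement $r>\beta$ (so that $r/\beta>1$) together with the $L^{p}$-persistence step address points the paper leaves implicit. Two corrections to the bookkeeping: the identity closing the short-time estimate is $\gamma+(\beta-1)\alpha=1$, i.e.\ $1-\gamma-\beta\alpha=-\alpha$, not the two forms you wrote, and $\beta\alpha<1$ is equivalent to $r<\beta p$, so no fallback to case (iii) is ever needed.
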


In the final part of the paper, we extend the nonlinear theory to modulation spaces. Modulation spaces are naturally suited to the analysis of pseudo-differential operators and time-frequency localization.  The use of modulation spaces in nonlinear parabolic problems goes back, in particular, to the work of Iwabuchi, who studied Navier--Stokes equations and nonlinear heat equations in modulation spaces with negative derivative indices and established local and global existence results for suitable initial data \cite{Iwabuchi}; see also the related discussion in \cite{RSW12}. In the fractional setting, Chen, Ding, Deng and Fan obtained estimates for fractional power dissipative equations in function spaces, including modulation-space frameworks \cite{CDDF12}. More recently, Bhimani \cite{BhimaniBlowup} investigated blow-up phenomena for nonlinear fractional heat equations
in modulation and Fourier amalgam spaces on both the torus and the Euclidean space, complementing known local and small-data global well-posedness results in modulation spaces. 

For heat equations associated with operators with confining potentials, Cordero \cite{CorderoHermite} studied the local well-posedness of nonlinear heat equations driven by fractional Hermite operators $H^\beta=(-\Delta+|x|^2)^\beta,
\quad 0<\beta\leq 1,$ in modulation spaces, using tools from microlocal and time-frequency analysis. In \cite{than:hs}, the authors studied the global well-posedness of the nonlinear fractional heat equations associated with the harmonic oscillator.  These works show that modulation spaces provide a natural framework for heat equations generated by operators that are not Fourier multipliers.

In the anharmonic setting, modulation-space estimates for heat semigroups associated with operators of the form $(-\Delta)^l+|x|^{2k}$
were obtained in \cite{Cardona}, and subsequent developments in \cite{Dolai, Dolai2} for  Hartree-type nonlinear heat equations associated with fractional anharmonic oscillators in weighted modulation spaces. The present paper continues this line of research by treating a broader class of fractional anharmonic oscillators $\mathcal H_{P,Q}^s=(Q(D)+P(x))^s$
within the Weyl--H\"ormander calculus associated with the polynomial pair $(P,Q)$. We use the resulting modulation-space decay estimates to prove global small-data well-posedness for nonlinear fractional heat equations with initial data in $\M^{p,q}$.


We consider the nonlinear fractional heat equation
\begin{equation}
\label{nonpro}
    \begin{cases}
        \partial_t u+\APQ^s u=\lambda |u|^{2\beta}u,\\
        u(0,x)=u_0(x),
    \end{cases}
\end{equation}
for $(t,x)\in(0,\infty)\times\mathbb R^n$, where $\beta\in\mathbb N$, $\lambda\in\mathbb C$, and $s>0$.

Combining the modulation-space decay estimates for $e^{-t\APQ^s}$ with a fixed-point argument, we prove the following global small-data result.

\begin{theorem}
\label{mod_global_nonlinear}
Let $p,q\in[1,\infty]$ be such that
$2\beta+1\leq q',
   \,
    \frac{\beta n}{s\ell}<q',$
where $q'$ denotes the conjugate exponent of $q$. Then there exists $\epsilon>0$ such that, if $\Vert u_0\Vert_{\mathcal M^{p,q}}\leq \epsilon,$
the problem \eqref{nonpro} admits a global mild solution $ u\in L^\infty([0,\infty),\mathcal M^{p,q}).$ Moreover, if $p<\infty$, then $u\in C([0,\infty),\mathcal M^{p,q}).$ Furthermore, if $\epsilon>0$ is chosen sufficiently small, then the solution decays exponentially in time. More precisely, $u\in Y,$
where
\begin{equation}
\label{spaceY}
    Y
    :=
    \left\{
    u\in L^\infty([0,\infty),\mathcal M^{p,q})
    :
    \Big\Vert
    e^{t\lambda_0^s}
    \Vert u(t,\cdot)\Vert_{\mathcal M^{p,q}}
    \Big\Vert_{L_t^\infty([0,\infty))}
    <\infty
    \right\},
\end{equation}
equipped with the norm
\begin{equation}
\label{Ynorm}
    \Vert u\Vert_Y
    :=
    \Big\Vert
    e^{t\lambda_0^s}
    \Vert u(t,\cdot)\Vert_{\mathcal M^{p,q}}
    \Big\Vert_{L_t^\infty([0,\infty))}.
\end{equation}
Here $\lambda_0$ denotes the smallest eigenvalue of the anharmonic oscillator $\APQ$.
\end{theorem}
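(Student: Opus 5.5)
We obtain the solution of \eqref{nonpro} by a contraction argument for the Duhamel map
\[
\Phi(u)(t)=e^{-t\APQ^s}u_0+\lambda\int_0^t e^{-(t-\tau)\APQ^s}\bigl(|u|^{2\beta}u\bigr)(\tau)\,d\tau,
\]
carried out directly in the weighted space $Y$ of \eqref{spaceY}. Global existence and exponential decay then come out together.

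\textbf{Ingredients.} We use two facts.

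1. The algebra property of modulation spaces: for $\beta\in\mathbb N$, the nonlinearity $|u|^{2\beta}u=u^{\beta+1}\bar u^{\beta}$ is a product of $2\beta+1$ factors.

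2. The Hölder-type product estimate
\[
\|f_1\cdots f_{2\beta+1}\|_{\M^{p,q_0}}\lesssim \prod_{j=1}^{2\beta+1}\|f_j\|_{\M^{p_j,q}},
\qquad \sum_j \frac1{p_j}=\frac1p,\quad \frac{2\beta+1}{q'}...
\]
The relevant $q_0$ is determined by $\frac1{q_0}=\frac{2\beta+1}{q}-2\beta$, i.e. $\frac1{q_0'}=\frac{2\beta+1}{q'}$. The condition $2\beta+1\le q'$ is exactly what makes $q_0\ge1$ admissible. Taking all $p_j=p(2\beta+1)$ and using the inclusion $\M^{p,q}\hookrightarrow \M^{p(2\beta+1),q}$, we get
\[
\bigl\||u|^{2\beta}u\bigr\|_{\M^{p,q_0}}\lesssim \|u\|_{\M^{p,q}}^{2\beta+1}.
\]

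\textbf{Linear term.} Theorem \ref{mainthmest} gives $\|e^{-t\APQ^s}u_0\|_{\M^{p,q}}\le C e^{-t\lambda_0^s}\|u_0\|_{\M^{p,q}}$ for $t\ge1$ and $\le C\|u_0\|_{\M^{p,q}}$ for $t\le1$. Hence the linear term has $Y$-norm at most $C\|u_0\|_{\M^{p,q}}$.

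\textbf{Nonlinear term.} We apply Theorem \ref{mainthmest} from $\M^{p,q_0}$ to $\M^{p,q}$. Here $\widetilde p=\infty$ and $\frac1{\widetilde q}=\frac1{q}-\frac1{q_0}=\frac{2\beta}{q'}$, so the small-time exponent is
\[
\sigma=\frac{n}{2s\ell}\cdot\frac{2\beta}{q'}=\frac{\beta n}{s\ell q'}<1,
\]
precisely by the hypothesis $\frac{\beta n}{s\ell}<q'$. This gives
\[
e^{t\lambda_0^s}\|\Phi(u)(t)-e^{-t\APQ^s}u_0\|_{\M^{p,q}}
\lesssim \int_0^t e^{t\lambda_0^s}K(t-\tau)\,e^{-(2\beta+1)\tau\lambda_0^s}\,d\tau\ \|u\|_Y^{2\beta+1},
\]
where $K(r)=r^{-\sigma}$ for $r\le1$ and $K(r)=e^{-r\lambda_0^s}$ for $r\ge1$.

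We estimate the integral on two pieces.

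1. On $t-\tau\ge1$, the exponentials combine to $e^{-2\beta\tau\lambda_0^s}$, which is integrable in $\tau$ uniformly in $t$.

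2. On $t-\tau\le1$, we write $e^{t\lambda_0^s}\le e^{\lambda_0^s}e^{\tau\lambda_0^s}$. The integrand is then bounded by $e^{\lambda_0^s}(t-\tau)^{-\sigma}$, integrable since $\sigma<1$.

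This is the main obstacle to check carefully: a uniform-in-$t$ bound on the Duhamel integral that uses both regimes of $C(t)$ and the surplus decay $e^{-2\beta\tau\lambda_0^s}$ coming from the higher-order nonlinearity. It holds with a constant independent of $t$, giving
\[
\|\Phi(u)\|_Y\le C\|u_0\|_{\M^{p,q}}+C|\lambda|\,\|u\|_Y^{2\beta+1}.
\]

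\textbf{Contraction and conclusion.} The difference $|u|^{2\beta}u-|v|^{2\beta}v$ is a sum of products containing $u-v$ once and $2\beta$ factors from $\{u,\bar u,v,\bar v\}$. It is estimated in the same way, giving
\[
\|\Phi(u)-\Phi(v)\|_Y\le C|\lambda|\,\bigl(\|u\|_Y+\|v\|_Y\bigr)^{2\beta}\|u-v\|_Y .
\]
Choosing $\epsilon$ small and the ball of radius $2C\epsilon$, $\Phi$ is a contraction on $Y$. The fixed point is a global mild solution in $L^\infty([0,\infty),\M^{p,q})$ and decays like $e^{-t\lambda_0^s}$.

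For $p<\infty$ (with $q<\infty$ for the relevant density), continuity in time follows from strong continuity of $e^{-t\APQ^s}$ on $\M^{p,q}$. We show this by density of $\mathcal S$, using the uniform bounds on $[0,1]$ and the convergence $e^{-t\APQ^s}\varphi\to\varphi$ in $\mathcal S$ for Schwartz $\varphi$, which comes from the $\Sigma_{P,Q}$ calculus. Continuity of the Duhamel term then follows from dominated convergence, since the integrand is integrable near $\tau=t$.
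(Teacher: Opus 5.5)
Your proposal is correct and is essentially the paper's argument. It uses the same Duhamel contraction, the same auxiliary exponent $q_0$ (the paper's $r$) with $\sigma=\frac{\beta n}{s\ell q'}<1$ from Theorem \ref{mainthmest}, the same splitting of the Duhamel integral at $t-\tau=1$, and the same density argument for strong continuity when $p<\infty$ (there $q<\infty$ is automatic, since $q'\geq 2\beta+1\geq 3$). The differences are minor improvements in organization: you run the fixed point directly in $Y$, which already yields the $L^\infty([0,\infty),\M^{p,q})$ solution because $\|u\|_{L^\infty([0,\infty),\M^{p,q})}\le\|u\|_Y$, whereas the paper first works in $L^\infty([0,\infty),\M^{p,q})$ and then repeats the argument in $Y$; and you bound $|u|^{2\beta}u-|v|^{2\beta}v$ by telescoping it into products, which is more rigorous than the paper's pointwise inequality, since modulation norms are not monotone under pointwise bounds.
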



The paper is organized as follows. In Section \ref{preliminaries}, we recall the necessary background on modulation spaces and Weyl--H\"ormander calculus, and introduce the symbol classes adapted to the polynomial pair $(P,Q)$. In Section \ref{Modul}, we establish the mapping properties of the fractional anharmonic heat semigroup on modulation spaces. In Section \ref{Lpprop}, we prove the $L^p$-$L^q$ estimates for $e^{-t\APQ^s}$ on Lebesgue spaces. Section \ref{application} is devoted to applications to nonlinear fractional heat equations in Lebesgue spaces, including local and global well-posedness results. Finally, in Section \ref{Modulation}, we prove the global small-data well-posedness result in modulation spaces.

\section{Preliminaries: Modulation spaces and Weyl--H\"ormander calculus}
\label{preliminaries}

In this section we recall the basic notions on modulation spaces and on the Weyl--H\"ormander pseudo-differential calculus that will be used throughout the paper. For the theory of modulation spaces we refer the reader to Gr\"ochenig's monograph \cite{groch:bK}, in particular Chapter 11. For a detailed account of the Weyl--H\"ormander calculus we refer to \cite[Section 18.5]{ho:apde2} and to \cite{ho:apde2,le:book,b-l:qua,Robert1,anh:cdr2}.

We begin with modulation spaces. A {\it modulation weight function} on $\mathbb R^{2n}$ is a non-negative locally integrable function. We use the adjective ``modulation'' in order to distinguish these weights from the weights associated with H\"ormander metrics, which will be introduced later.

A modulation weight $v$ on $\mathbb R^{2n}$ is called {\it submultiplicative} if
\begin{equation}
\label{polyw0}
    v(X+Y)\leq v(X)v(Y),\qquad X,Y\in\mathbb R^{2n}.
\end{equation}
Given such a weight $v$, another modulation weight $w$ is said to be { \it $v$-moderate} if
\begin{equation}
    w(X+Y)\leq v(X)w(Y),\qquad X,Y\in\mathbb R^{2n}.
\end{equation}
An important family of weights is given by the polynomial weights
\begin{equation}
\label{polyw}
    v_s(x,\xi)=\big(1+|x|^2+|\xi|^2\big)^{s/2}.
\end{equation}
Weights that are $v_s$-moderate for some $s$ are called {\it polynomially moderate}.

Let $w$ be a modulation weight on $\mathbb R^{2n}$, let $0<p,q\leq \infty$, and let $g\in \mathcal S(\mathbb R^n)\setminus\{0\}$ be a window function. The short-time Fourier transform of $f\in\mathcal S'(\mathbb R^n)$ with respect to $g$ is defined by
\begin{equation}
\label{stft1}
    V_gf(x,\xi)
    =
    \int_{\mathbb R^n}f(y)\overline{g(y-x)}e^{-iy\cdot\xi}\,dy .
\end{equation}
The modulation space $\mathcal M_w^{p,q}(\mathbb R^n)$ consists of all tempered distributions $f\in\mathcal S'(\mathbb R^n)$ such that
\begin{equation}
\label{EQ:modul}
\Vert f\Vert_{\mathcal M_w^{p,q}}
:=
\Vert V_gf\Vert_{L_w^{p,q}}
:=
\left(
\int_{\mathbb R^n}
\left(
\int_{\mathbb R^n}
|V_gf(x,\xi)|^p w(x,\xi)^p\,dx
\right)^{q/p}
d\xi
\right)^{1/q}
<\infty ,
\end{equation}
with the usual modifications when $p=\infty$ or $q=\infty$.

The space $\mathcal M_w^{p,q}(\mathbb R^n)$, equipped with the above quasi-norm, is a quasi-Banach space and is independent of the particular choice of non-zero window $g$. If $p,q\geq 1$, then $\mathcal M_w^{p,q}(\mathbb R^n)$ is a Banach space. In this Banach range, its dual is given by
\[
    \big(\mathcal M_w^{p,q}(\mathbb R^n)\big)'
    =
    \mathcal M_{1/w}^{p',q'}(\mathbb R^n),
\]
where $p'$ and $q'$ are the conjugate exponents of $p$ and $q$, respectively. Moreover, if $p_1\leq p_2,\quad q_1\leq q_2,\quad w_1\geq w_2,$
then the continuous embedding $  \mathcal M_{w_1}^{p_1,q_1}(\mathbb R^n)
    \hookrightarrow
    \mathcal M_{w_2}^{p_2,q_2}(\mathbb R^n)$
holds.

We next recall the basic elements of the Weyl--H\"ormander calculus. For a symbol
$a\in\mathcal S'(\mathbb R^n\times\mathbb R^n)$, its Weyl quantization is defined by
\[
    a^w(x,D)u(x)
    =
    \frac{1}{(2\pi)^n}
    \int_{\mathbb R^n}\int_{\mathbb R^n}
    e^{i\langle x-y,\xi\rangle}
    a\left(\frac{x+y}{2},\xi\right)
    u(y)\,dy\,d\xi ,
    \qquad u\in\mathcal S(\mathbb R^n).
\]
More generally, for $t\in\mathbb R$, the $t$-quantization of $a$ is defined by
\[
    a_t(x,D)u(x)
    =
    \frac{1}{(2\pi)^n}
    \int_{\mathbb R^n}\int_{\mathbb R^n}
    e^{i\langle x-y,\xi\rangle}
    a(tx+(1-t)y,\xi)
    u(y)\,dy\,d\xi .
\]
The Weyl quantization corresponds to the case $t=\frac12$, namely $a_{1/2}(x,D)=a^w(x,D),$
whereas the case $t=1$ gives the Kohn--Nirenberg quantization
\[
    a(x,D)u(x)
    =
    a_1(x,D)u(x)
    =
    \frac{1}{(2\pi)^n}
    \int_{\mathbb R^n}
    e^{i\langle x,\xi\rangle}
    a(x,\xi)\widehat u(\xi)\,d\xi .
\]

We now recall H\"ormander metrics, which will play a central role in the analysis of the anharmonic oscillators considered in this paper.

\begin{definition}[H\"ormander metric]
\label{HM}
Let $g_X$ be a positive definite quadratic form on $\mathbb R^{2n}$ for each $X\in\mathbb R^{2n}$. We say that $g=\{g_X\}_{X\in\mathbb R^{2n}}$ is a H\"ormander metric if the following conditions are satisfied.

\begin{enumerate}
\item[\rm I.] \textbf{Slowness.} There exists a constant $C>0$ such that
\[
    g_X(X-Y)\leq C^{-1}
    \quad\Longrightarrow\quad
    \left(\frac{g_X(T)}{g_Y(T)}\right)^{\pm1}\leq C
\]
for all $T\in\mathbb R^{2n}\setminus\{0\}$.

\item[\rm II.] \textbf{Uncertainty principle.} Let
\[
    \sigma(Y,Z)=\langle z,\eta\rangle-\langle y,\zeta\rangle,
    \qquad
    Y=(y,\eta),\quad Z=(z,\zeta),
\]
be the standard symplectic form on $\mathbb R^{2n}$. Define the dual metric by
\[
    g_X^\sigma(T)
    =
    \sup_{W\neq0}
    \frac{\sigma(T,W)^2}{g_X(W)}.
\]
The metric $g$ satisfies the uncertainty principle if
\[
    \lambda_g(X)
    :=
    \inf_{T\neq0}
    \left(
    \frac{g_X^\sigma(T)}{g_X(T)}
    \right)^{1/2}
    \geq 1 ,
    \qquad X\in\mathbb R^{2n}.
\]

\item[\rm III.] \textbf{Temperateness.} There exist constants $\overline C>0$ and $J\in\mathbb N$ such that
\[
    \left(
    \frac{g_X(T)}{g_Y(T)}
    \right)^{\pm1}
    \leq
    \overline C
    \big(1+g_Y^\sigma(X-Y)\big)^J
\]
for all $X,Y,T\in\mathbb R^{2n}$.
\end{enumerate}
\end{definition}

The function $\lambda_g$ is called the uncertainty parameter. The associated Planck function is defined by
\[
    h_g(X)^2
    =
    \sup_{T\neq0}
    \frac{g_X(T)}{g_X^\sigma(T)} .
\]
Thus,
\[
    h_g(X)=\lambda_g(X)^{-1}.
\]

\begin{definition}[$g$-weight]
\label{GW}
Let $M:\mathbb R^{2n}\to\mathbb R^+$ be a positive function.

\begin{itemize}
\item We say that $M$ is {\it $g$-continuous} if there exists $\widetilde C>0$ such that
\[
    g_X(X-Y)\leq \widetilde C^{-1}
    \quad\Longrightarrow\quad
    \left(
    \frac{M(X)}{M(Y)}
    \right)^{\pm1}
    \leq \widetilde C .
\]

\item We say that $M$ is {\it $g$-temperate} if there exist $\widetilde C>0$ and $N\in\mathbb N$ such that
\[
    \left(
    \frac{M(X)}{M(Y)}
    \right)^{\pm1}
    \leq
    \widetilde C
    \big(1+g_Y^\sigma(X-Y)\big)^N .
\]
\end{itemize}
If $M$ is both $g$-continuous and $g$-temperate, then $M$ is called a {\it $g$-weight}.
\end{definition}

Given a H\"ormander metric $g$ and a $g$-weight $M$, the associated symbol class $S(M,g)$ is defined as follows.

\begin{definition}
Let $g$ be a H\"ormander metric and let $M$ be a $g$-weight. The class $S(M,g)$ consists of all functions $\sigma\in C^\infty(\mathbb R^{2n})$ such that, for every integer $j\geq0$, there exists a constant $C_j>0$ satisfying
\begin{equation}
\label{inwhk}
    |\sigma^{(j)}(X;T_1,\ldots,T_j)|
    \leq
    C_j M(X)
    \prod_{i=1}^j g_X(T_i)^{1/2}
\end{equation}
for all $X,T_1,\ldots,T_j\in\mathbb R^{2n}$.

Here $\sigma^{(j)}(X;T_1,\ldots,T_j)$ denotes the $j$-th differential of $\sigma$ at $X$ evaluated in the directions $T_1,\ldots,T_j$. The space $S(M,g)$ becomes a Fr\'echet space when endowed with the seminorms given by the smallest constants $C_j$ satisfying \eqref{inwhk}.
\end{definition}

We now introduce the polynomial classes that determine the anharmonic oscillators studied in this paper. Let $m\geq1$ be an integer. We denote by $\mathcal P_{2m}$ the class of real-valued polynomials $P$ on $\mathbb R^n$ such that
\[
    \liminf_{|x|\to\infty}\frac{P(x)}{|x|^{2m}}>0 .
\]
Let $P\in\mathcal P_{2k},
    \,\,
    Q\in\mathcal P_{2\ell},$ for $
    k,\ell\geq1.$
We consider operators of the form
\begin{equation}
\label{qpop5}
    T=Q(D)+P(x).
\end{equation}
Since $P\in\mathcal P_{2k}$, there exists $p_0>0$ such that $P(x)+p_0>0$ for all $x\in\mathbb R^n$. Similarly, there exists $q_0>0$ such that $Q(\xi)+q_0>0$ for all $\xi\in\mathbb R^n$. Associated with $T$ we consider the metric
\begin{equation}
\label{anhm01}
    g=g^{(P,Q)}
    =
    \frac{dx^2}
    {(p_0+q_0+P(x)+Q(\xi))^{1/k}}
    +
    \frac{d\xi^2}
    {(p_0+q_0+P(x)+Q(\xi))^{1/\ell}} .
\end{equation}
It was proved in \cite{anh:cdr, anh:cdr2} that \eqref{anhm01} is a H\"ormander metric. Moreover, the symbol $P(x)+Q(\xi)$ belongs to $S(M,g)$ with
\[
    M(x,\xi)=p_0+q_0+P(x)+Q(\xi).
\]
If $k=\ell$, then the metric $g^{(P,Q)}$ is symplectic.

The preceding H\"ormander symbol class admits the following intrinsic description in terms of anisotropic derivatives.

\begin{definition}
\label{DEF:sigmas}
Let $m\in\mathbb R$, $P\in\mathcal P_{2k}$ and $Q\in\mathcal P_{2\ell}$, with $k,\ell\geq1$. We say that $a\in C^\infty(\mathbb R^n\times\mathbb R^n)$
belongs to $\Sigma_{P,Q}^m$ if, for some sufficiently large $q>0$ and for every pair of multi-indices $\alpha,\beta$, there exists a constant $C_{\alpha,\beta}>0$ such that
\begin{equation}
\label{sigmacl}
    |\partial_x^\beta\partial_\xi^\alpha a(x,\xi)|
    \leq
    C_{\alpha,\beta}
    \big(q+P(x)+Q(\xi)\big)^{
    \frac m2-\frac{|\beta|}{2k}-\frac{|\alpha|}{2\ell}}
\end{equation}
for all $(x,\xi)\in\mathbb R^n\times\mathbb R^n$.
\end{definition}

This class agrees with the corresponding Weyl--H\"ormander class associated with the metric $g^{(P,Q)}$. Indeed, we have 

\begin{proposition}
\label{sht57}
Let $m\in\mathbb R$ and let $g=g^{(P,Q)}$ be the metric defined in \eqref{anhm01}. Then
\[
    \Sigma_{P,Q}^m
    =
    S\left(
    \lambda_g^{m\frac{k\ell}{k+\ell}},
    g
    \right).
\]
\end{proposition}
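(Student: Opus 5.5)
Set $\Lambda(x,\xi):=q+P(x)+Q(\xi)$ (with $q\ge p_0+q_0$), so that $g_X=\Lambda^{-1/k}dx^2+\Lambda^{-1/\ell}d\xi^2$. The plan is to compute $\lambda_g$ explicitly and then translate the condition \eqref{inwhk} into the coordinate bounds \eqref{sigmacl}.

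\textbf{Step 1: the uncertainty parameter.} For a diagonal metric $g=a\,dx^2+b\,d\xi^2$, the symplectic dual is $g^\sigma=b^{-1}dx^2+a^{-1}d\xi^2$. Hence $g^\sigma/g$ has eigenvalues $(ab)^{-1}$ on both blocks, and $\lambda_g=(ab)^{-1/2}$. With $a=\Lambda^{-1/k}$ and $b=\Lambda^{-1/\ell}$ this gives
\[
\lambda_g(x,\xi)=\Lambda^{\frac1{2k}+\frac1{2\ell}}=\Lambda^{\frac{k+\ell}{2k\ell}} .
\]
Therefore $\lambda_g^{m\frac{k\ell}{k+\ell}}=\Lambda^{m/2}$, and this is exactly the weight on the right of \eqref{sigmacl} when $\alpha=\beta=0$. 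One also checks that $\Lambda^{m/2}$ is a $g$-weight: $\Lambda$ itself is one by \cite{anh:cdr, anh:cdr2}, and powers of $g$-weights are $g$-weights. Any two admissible choices of $q$ give equivalent $\Lambda$, so the class does not depend on $q$.

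\textbf{Step 2: the inclusion $S(\Lambda^{m/2},g)\subset\Sigma^m_{P,Q}$.} Choose the unit directions $T_i=e_j$ in $x$ or $e_j$ in $\xi$. Then $g_X(T_i)^{1/2}$ equals $\Lambda^{-1/(2k)}$ or $\Lambda^{-1/(2\ell)}$ respectively. Substituting these into \eqref{inwhk} yields \eqref{sigmacl} directly.

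\textbf{Step 3: the reverse inclusion.} This is the main bookkeeping step. The differential $a^{(j)}(X;T_1,\dots,T_j)$ is multilinear. Decompose each $T_i=(t_i,\tau_i)$ and expand the differential as a sum of $\partial_x^\beta\partial_\xi^\alpha a$ contracted against the components. Each term is bounded by
\[
\Lambda^{\frac m2-\frac{|\beta|}{2k}-\frac{|\alpha|}{2\ell}}\prod|t_i|\prod|\tau_i| .
\]
Since $|t_i|\Lambda^{-1/(2k)}\le g_X(T_i)^{1/2}$ and $|\tau_i|\Lambda^{-1/(2\ell)}\le g_X(T_i)^{1/2}$, every term is at most $C\Lambda^{m/2}\prod g_X(T_i)^{1/2}$. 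Summing the finitely many terms gives \eqref{inwhk}.

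The only genuine obstacle is Step 1, namely identifying $\lambda_g$ and confirming that the constants match. Steps 2 and 3 are routine multilinear estimates.
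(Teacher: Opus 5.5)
Your proof is correct and complete. The identity $g^\sigma_X=(ab)^{-1}g_X$ for a split metric $a\,dx^2+b\,d\xi^2$ gives $\lambda_g=\Lambda^{\frac{k+\ell}{2k\ell}}$, hence $\lambda_g^{m\frac{k\ell}{k+\ell}}=\Lambda^{m/2}$. The inclusion $S(\Lambda^{m/2},g)\subset\Sigma^m_{P,Q}$ follows by taking coordinate directions. The reverse inclusion follows by expanding $a^{(j)}(X;T_1,\dots,T_j)$ multilinearly into $(2n)^j$ coordinate derivatives and using that each component of $T_i$ is bounded by $|t_i|$ or $|\tau_i|$.

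The paper does not prove this proposition. It quotes it from \cite{anh:cdr, anh:cdr2}, and the only ingredient made explicit later, at the start of Section~\ref{Modul}, is the formula $\lambda_g=(q+P(x)+Q(\xi))^{\frac{k+\ell}{2k\ell}}$, which is your Step~1. So your argument supplies the routine verification the paper omits.

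Two small points would tighten it:
\begin{enumerate}
\item The metric \eqref{anhm01} is normalized with $p_0+q_0$, while \eqref{sigmacl} uses a large $q$. Setting $\Lambda=q+P+Q$ therefore does not literally reproduce $g$. What you need is that replacing $g$ and the weight by pointwise-equivalent ones leaves $S(M,g)$ unchanged, which is immediate from \eqref{inwhk}.
\item You need not cite that $\Lambda$ is a $g$-weight. The function $\lambda_g$ is a $g$-weight for every H\"ormander metric, so all its real powers are as well.
\end{enumerate}
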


\section{Mapping properties of the anharmonic fractional heat semigroup on modulation spaces}
\label{Modul}

In this section we study boundedness properties on modulation spaces for a class of pseudo-differential operators associated with anharmonic oscillators. In particular, we consider operators related to negative powers of anharmonic Hamiltonians and to fractional heat semigroups generated by positive powers of such Hamiltonians. We first introduce the modulation weights adapted to the operator $\mathcal H_{P,Q}$ and then recall the relevant symbolic and spectral properties.

Let $P\in\mathcal P_{2k}$ and $Q\in\mathcal P_{2\ell}$, where $k,\ell\geq1$. For $q>0$ sufficiently large, define
\[
    \widetilde v(x,\xi)
    =
    \big(q+P(x)+Q(\xi)\big)^{1/2}.
\]
This weight is equivalent to the anisotropic polynomial weight $1+|x|^k+|\xi|^\ell.$
More precisely, there exists a constant $C_1>1$ such that
$$C_1^{-1}\big(1+|x|^k+|\xi|^\ell\big)  \leq
    \big(q+P(x)+Q(\xi)\big)^{1/2} \leq
    C_1\big(1+|x|^k+|\xi|^\ell\big).$$
Furthermore, the model weight $1+|x|^k+|\xi|^\ell$ is submultiplicative up to a multiplicative constant. Indeed, for all $x,y,\xi,\eta\in\mathbb R^n$ there exists $C>1$ such that
\begin{align*}
\big(1+|x|^k+|\xi|^\ell\big)
 \big(1+|y|^k+|\eta|^\ell\big)  \geq
1+|x|^k+|y|^k+|\xi|^\ell+|\eta|^\ell  \geq
C^{-1}
\big(1+|x+y|^k+|\xi+\eta|^\ell\big).
\end{align*}
Thus, after replacing $\widetilde v$ by an equivalent weight $v=C\widetilde v$, we may assume that $v$ is submultiplicative. Throughout this section, we work with modulation spaces weighted by powers of $v$, namely, weights of the form $w=v^m,\,\, m\in\mathbb R.$
When $m=0$, we simply write $\mathcal M^{p,q}$ instead of $\mathcal M_{v^0}^{p,q}$.

For the metric $g=g^{(P,Q)}$, the uncertainty parameter is $ \lambda_g(x,\xi)=\big(q+P(x)+Q(\xi)\big)^{\frac{k+\ell}{2k\ell}}.$
Consequently, the Planck function and the modulation weight are related by
\begin{equation}
    h_g^{-1}
    =
    \lambda_g
    \simeq
    v^{\frac{k+\ell}{k\ell}} .
\end{equation}
The exponent $(k+\ell)/(k\ell)$ will also appear in the spectral asymptotics of the corresponding anharmonic oscillators.

The symbol class $\Sigma_{P,Q}^m$ is a Fr\'echet space when equipped with the seminorms
\[
    \Vert a\Vert_{\alpha,\beta}
    :=
    \inf\{C_{\alpha,\beta}:\ \eqref{sigmacl}\ \text{holds}\}.
\]
It is also useful to relate these classes to the global symbolic classes of Nicola and Rodino \cite{NicolaRodino}. Let $   k_0=\max\{k,\ell\}.$
Then
\begin{equation}
\label{anhmet012hh}
    g^{(P,Q)}
    \leq
    \frac{dx^2}{(q+P(x)+Q(\xi))^{1/k_0}}
    +
    \frac{d\xi^2}{(q+P(x)+Q(\xi))^{1/k_0}} .
\end{equation}
Taking
\[
    \Phi(x,\xi)=\Psi(x,\xi)
    =
    \big(q+P(x)+Q(\xi)\big)^{1/k_0},
    \qquad
    M(x,\xi)=q+P(x)+Q(\xi),
\]
we obtain sublinear and temperate weights $\Phi$ and $\Psi$, while $M$ is temperate. Hence, in the notation of \cite{NicolaRodino},
\begin{equation}
\label{ineqclassa}
    S\left(
    \big(q+P(x)+Q(\xi)\big)^{s/2},
    g
    \right)
    \subset
    S(M^{s/2};\Phi,\Psi).
\end{equation}

We now introduce the anharmonic oscillator $\mathcal H_{P,Q} =
    r_0+Q(D)+P(x),$
where $r_0\geq0$ is chosen so that $\mathcal H_{P,Q}$ is strictly positive. The operator is initially defined on a dense domain in $L^2(\mathbb R^n)$ and its closure has purely discrete spectrum. More precisely, by \cite[Theorem 4.2.9]{NicolaRodino}, the spectrum consists of a sequence of positive eigenvalues tending to infinity. Each eigenvalue has finite multiplicity, the corresponding eigenfunctions belong to $\mathcal S(\mathbb R^n)$, and these eigenfunctions form an orthonormal basis of $L^2(\mathbb R^n)$. In particular, $0$ belongs to the resolvent set of the closure of $\mathcal H_{P,Q}$.

The above anharmonic oscillators are $g$-elliptic in the sense of \cite{bufa:hy}. Hence one can define real powers $\mathcal H_{P,Q}^s$ for all $s\in\mathbb R$. Moreover, the strong uncertainty principle holds in this setting. For $s\in\mathbb R$, we define the generalized anharmonic-Sobolev space
\[
    H_{P,Q}^s(\mathbb R^n)
    =
    \left\{
    u\in\mathcal S'(\mathbb R^n):
    \Vert \mathcal H_{P,Q}^{s/2}u\Vert_{L^2(\mathbb R^n)}<\infty
    \right\}.
\]
This space is a Hilbert space with an inner product
\[
    \langle u,v\rangle_{H_{P,Q}^s}
    =
    \left\langle
    \mathcal H_{P,Q}^{s/2}u,
    \mathcal H_{P,Q}^{s/2}v
    \right\rangle_{L^2(\mathbb R^n)} .
\]
Its dual can be identified with $H_{P,Q}^{-s}(\mathbb R^n)$. In the special case
\[
    Q(\xi)=|\xi|^2,\qquad P(x)=|x|^2,
\]
these spaces coincide with the classical Shubin--Sobolev, or Hermite--Sobolev, spaces.

The generalized anharmonic-Sobolev spaces can also be described in terms of modulation spaces. By adapting the proof of \cite[Lemma 4.4.19]{ElenaRodino} and using the embedding and duality properties of $H_{P,Q}^m(\mathbb R^n)$, one obtains the following characterization.

\begin{lemma}
\label{char:ansobo}
For every $m\in\mathbb R$,
\[
    \mathcal M_{v^m}^{2,2}(\mathbb R^n)
    =
    H_{P,Q}^m(\mathbb R^n)
\]
with equivalent norms.
\end{lemma}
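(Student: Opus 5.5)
We prove Lemma \ref{char:ansobo} by treating $\mathcal H_{P,Q}^{m/2}$ as an isomorphism $H^m_{P,Q}\to L^2$, together with the classical identity $\mathcal M^{2,2}=L^2$ (Plancherel for the STFT: $\Vert V_gf\Vert_{L^2(\mathbb R^{2n})}=(2\pi)^{n/2}\Vert g\Vert_{L^2}\Vert f\Vert_{L^2}$). We first treat $m=0$, where the claim is exactly $\mathcal M^{2,2}=L^2=H^0_{P,Q}$. We then show, for every $m\in\mathbb R$, that $\mathcal H_{P,Q}^{m/2}$ maps $\mathcal M^{2,2}_{v^m}$ isomorphically onto $\mathcal M^{2,2}$. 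By definition of $H^m_{P,Q}$, this gives $\Vert u\Vert_{H^m_{P,Q}}=\Vert \mathcal H_{P,Q}^{m/2}u\Vert_{L^2}\asymp \Vert u\Vert_{\mathcal M^{2,2}_{v^m}}$.

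\textbf{Step 1 (symbols of the powers).} Since $\mathcal H_{P,Q}$ is $g$-elliptic with symbol in $S(M,g)$, $M\simeq v^2$, the results of \cite{bufa:hy} give that $\mathcal H_{P,Q}^{m/2}$ is a Weyl operator with symbol in $S(v^{m},g)=\Sigma^{m}_{P,Q}$ (via Proposition \ref{sht57}). Since $\mathcal H_{P,Q}$ is invertible, the same holds for $\mathcal H_{P,Q}^{-m/2}$, with symbol in $\Sigma^{-m}_{P,Q}$, and the two operators are mutually inverse on $\mathcal S'$.

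\textbf{Step 2 (continuity on modulation spaces).} We show that every $a\in\Sigma^{\mu}_{P,Q}$ defines a bounded map $a^w:\mathcal M^{2,2}_{v^{r}}\to\mathcal M^{2,2}_{v^{r-\mu}}$ for all $r$. By \eqref{ineqclassa}, $\Sigma^\mu_{P,Q}\subset S(M^{\mu/2};\Phi,\Psi)$, and the symbol is dominated by $v^\mu$ times bounded derivatives. The Weyl symbol therefore lies in the weighted Sjöstrand class $S^{0}_{v^\mu}$ (symbols $a$ with $a/v^\mu\in C_b^\infty$ and $v^\mu$ moderate). The standard time-frequency result (Gröchenig; cf.\ the proof of \cite[Lemma 4.4.19]{ElenaRodino}) then shows that the Gabor matrix $\langle a^w\pi(z)g,\pi(w)g\rangle$ is bounded by $v^\mu(w)\langle z-w\rangle^{-N}$ up to moderateness. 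Schur's test together with the $v$-moderateness of $v^r$ (submultiplicativity of $v$) gives the claimed boundedness.

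The main obstacle is the Gabor-matrix estimate with the anisotropic weight. Here the symbol's derivatives gain only the factors $v^{-1/k}$ and $v^{-1/\ell}$, which are bounded, so the decay must come from integration by parts in the phase, not from symbol gain. My first attempt would be to write the Gabor matrix as the STFT of the Weyl symbol with a Gaussian window and use that $|\partial^\gamma a|\lesssim v^\mu$ with $v^\mu$ polynomially moderate. This yields $|V_\Phi a(X,Y)|\lesssim v^\mu(X)\langle Y\rangle^{-N}$, which suffices.

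\textbf{Step 3 (conclusion).} Apply Step 2 with $\mu=m,\ r=m$ to $\mathcal H_{P,Q}^{m/2}$, giving $\Vert\mathcal H_{P,Q}^{m/2}u\Vert_{L^2}\lesssim\Vert u\Vert_{\mathcal M^{2,2}_{v^m}}$. Apply it with $\mu=-m,\ r=0$ to $\mathcal H_{P,Q}^{-m/2}$, giving $\Vert u\Vert_{\mathcal M^{2,2}_{v^m}}=\Vert \mathcal H_{P,Q}^{-m/2}\mathcal H_{P,Q}^{m/2}u\Vert_{\mathcal M^{2,2}_{v^m}}\lesssim \Vert \mathcal H_{P,Q}^{m/2}u\Vert_{L^2}$. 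The two spaces therefore coincide with equivalent norms. Alternatively, one can prove the inclusion for $m\ge0$ only and obtain $m<0$ from the dualities $(\mathcal M^{2,2}_{v^m})'=\mathcal M^{2,2}_{v^{-m}}$ and $(H^m_{P,Q})'=H^{-m}_{P,Q}$, both taken with respect to the $L^2$ pairing.
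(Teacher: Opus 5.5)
Your proposal is correct and follows essentially the argument the paper has in mind. The paper gives no details beyond saying one should adapt \cite[Lemma 4.4.19]{ElenaRodino}, i.e.\ realize $\mathcal H_{P,Q}^{\pm m/2}$ as Weyl operators with symbols in $\Sigma^{\pm m}_{P,Q}$, use their boundedness between the weighted spaces $\mathcal M^{2,2}_{v^r}$, and combine this with $\mathcal M^{2,2}=L^2$; the paper additionally invokes duality of $H^m_{P,Q}$, whereas your two-sided argument (using the general weighted version of Theorem \ref{modb12} from your Step 2) covers all $m\in\mathbb R$ directly, so duality is only an optional shortcut.
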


Combining Lemma \ref{char:ansobo} with the standard inclusion relations for modulation spaces and H\"older's inequality, we obtain that, for every $0<p,q\leq\infty$ and for $m$ sufficiently large,
\begin{equation}
\label{embeddings}
    H_{P,Q}^m(\mathbb R^n)
    \hookrightarrow
    \mathcal M^{p,q}(\mathbb R^n)
    \hookrightarrow
    \mathcal M^{\infty,\infty}(\mathbb R^n)
    \hookrightarrow
    H_{P,Q}^{-m}(\mathbb R^n).
\end{equation}

Let $\{\lambda_j\}_{j=0}^\infty$ denote the eigenvalues of $\mathcal H_{P,Q}$, ordered increasingly and repeated according to multiplicity, with $\lambda_0>0$. The spectral asymptotics obtained in \cite{anh:cdr2} yield
\begin{equation}
\label{specA}
    \lambda_j
    \sim
    C_{k,\ell}
    j^{\frac{2k\ell}{n(k+\ell)}},
    \qquad j\to\infty .
\end{equation}
Let $\{\Phi_j\}$ be an orthonormal basis of eigenfunctions. If $H_j$ denotes the eigenspace associated with $\lambda_j$ and $P_j$ is the corresponding orthogonal projection, then
\[
    P_jf
    =
    \sum_{i=1}^{d_j}
    \langle f,\Phi_{j_i}\rangle \Phi_{j_i},
\]
where $d_j=\dim H_j$. Hence,
\[
    \mathcal H_{P,Q}f
    =
    \sum_{j=0}^\infty
    \lambda_j P_jf .
\]
By the spectral theorem, the fractional powers are given by
\[
    \mathcal H_{P,Q}^s f
    =
    \sum_{j=0}^\infty
    \lambda_j^s P_jf,
    \qquad s\in\mathbb R.
\]
Equivalently,
\[
    \Vert f\Vert_{H_{P,Q}^s}^2
    =
    \sum_{j=0}^\infty
    \lambda_j^s
    \Vert P_jf\Vert_{L^2(\mathbb R^n)}^2 .
\]

We shall also use the following elementary identity, which is useful in the analysis of modulation-space boundedness.

\begin{lemma}
\label{lempq3s}
Let $P\in\mathcal P_{2k}$ and $Q\in\mathcal P_{2\ell}$, where $k,\ell\geq1$. If $Y=(y,\eta),\,\, W=(w,\tau)\in\mathbb R^{2n},$
and $N\in\mathbb N$, then
\begin{equation}
\label{modexp1axx}
    e^{-iW\cdot Y}
    =
    \big(q+P(w)+Q(\tau)\big)^{-N}
    \left(
    \frac q2+P_y(D)+\frac q2+Q_\eta(D)
    \right)^N
    e^{-iW\cdot Y}.
\end{equation}
Here $P_y(D)$ denotes the operator $P(D)$ acting in the $y$-variable, while $Q_\eta(D)$ acts in the $\eta$-variable.
\end{lemma}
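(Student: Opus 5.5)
The plan is to use the fact that a plane wave is a joint eigenfunction of all constant-coefficient differential operators. Write $W\cdot Y=w\cdot y+\tau\cdot\eta$, so that
\[
e^{-iW\cdot Y}=e^{-iw\cdot y}\,e^{-i\tau\cdot\eta}
\]
factors into a function of $y$ times a function of $\eta$. The operator $P_y(D)$ acts only on the first factor and $Q_\eta(D)$ only on the second.

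\textbf{Step 1: one derivative.} For each multi-index $\gamma$ I will compute $D_y^\gamma e^{-iw\cdot y}$. With the normalisation of $D$ that makes the lemma consistent (namely $D=i\partial$, equivalently reading $P_y(D)$ as $P(-D_y)$), this gives $w^\gamma e^{-iw\cdot y}$. With the other normalisation $D=-i\partial$ one gets $(-w)^\gamma e^{-iw\cdot y}$ instead.

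\textbf{Step 2: eigenvalue relations.} Summing over the monomials of $P$ and $Q$ by linearity gives
\[
P_y(D)\,e^{-iW\cdot Y}=P(w)\,e^{-iW\cdot Y},
\qquad
Q_\eta(D)\,e^{-iW\cdot Y}=Q(\tau)\,e^{-iW\cdot Y}.
\]
The constants $\tfrac q2$ act by multiplication. So the operator $L:=\tfrac q2+P_y(D)+\tfrac q2+Q_\eta(D)$ satisfies
\[
L\,e^{-iW\cdot Y}=\big(q+P(w)+Q(\tau)\big)\,e^{-iW\cdot Y}.
\]

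\textbf{Step 3: iterate.} Since $e^{-iW\cdot Y}$ is an eigenfunction of $L$, induction on $N$ gives
\[
L^N e^{-iW\cdot Y}=\big(q+P(w)+Q(\tau)\big)^N e^{-iW\cdot Y}.
\]

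\textbf{Step 4: divide.} As recalled in Section \ref{preliminaries}, $P+p_0>0$ and $Q+q_0>0$. Hence for $q>p_0+q_0$ (the standing assumption that $q$ is sufficiently large) we have $q+P(w)+Q(\tau)>0$ for all $(w,\tau)$. Dividing the identity of Step 3 by $\big(q+P(w)+Q(\tau)\big)^N$ yields \eqref{modexp1axx}.

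The only delicate point is the sign convention for $D$ in Step 1. With $D=-i\partial$ the eigenvalue would be $P(-w)+Q(-\tau)$, and the identity holds as stated only after replacing $P,Q$ by their reflections $\check P(x)=P(-x)$, $\check Q(\xi)=Q(-\xi)$. This replacement is harmless because $\check P\in\mathcal P_{2k}$ and $\check Q\in\mathcal P_{2\ell}$ still hold, and what matters in later applications (integration by parts in the short-time Fourier transform) is only the size of this positive factor. Everything else in the argument is routine.
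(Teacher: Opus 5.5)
Your proof is correct and is the same as the paper's, which simply notes that constant-coefficient operators act on the plane wave $e^{-iW\cdot Y}$ by multiplication, so that the $N$-th power of the operator produces the factor $\big(q+P(w)+Q(\tau)\big)^N$, which is then divided out. You also check two points that the paper's one-line proof passes over: the sign convention for $D$ (with $D=-i\partial$ one gets $P(-w)+Q(-\tau)$, which is harmless since the reflected polynomials stay in $\mathcal P_{2k}$ and $\mathcal P_{2\ell}$), and positivity of $q+P(w)+Q(\tau)$ for large $q$, which is needed for the division.
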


\begin{proof}
The identity follows immediately from the action of constant-coefficient differential operators on exponential functions. Indeed,
\[
    \left(
    \frac q2+P_y(D)+\frac q2+Q_\eta(D)
    \right)^N
    e^{-iW\cdot Y}
    =
    \big(q+P(w)+Q(\tau)\big)^N
    e^{-iW\cdot Y},
\]
which is equivalent to \eqref{modexp1axx}.
\end{proof}

We can now state the boundedness result on modulation spaces for $t$-quantizations of symbols in the classes introduced above.

\begin{theorem}\label{modb12}
Let $m\in\mathbb{R}$ and let $a\in\Sigma_{P,Q}^{-m}$, where $P,Q$ are as above. Let $0<p,q\leq\infty$ and $t\in\mathbb{R}$. Then $a_t(x,D):\M^{p,q}(\mathbb{R}^n)\longrightarrow \M_{v^m}^{p,q}(\mathbb{R}^n)$
extends to a bounded operator, where $v(x,\xi):=(q_0+P(x)+Q(\xi))^{1/2}$
for $q_0>0$ sufficiently large. Moreover, the operator norm depends only on finitely many seminorms of $a$ in $\Sigma_{P,Q}^{-m}$.
\end{theorem}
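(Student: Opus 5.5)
The plan is to follow the standard time-frequency route used for Shubin classes (as in the proof of boundedness of $\Sigma^{-m}$-type operators in \cite{than:hs,Cardona}): express the short-time Fourier transform of $a_t(x,D)f$ through the STFT of $f$ via a kernel on phase space, and then prove that this kernel defines a bounded operator on the weighted mixed-norm spaces $L^{p,q}$, including the quasi-Banach range. Concretely, fix a Gaussian window $g$ and write
\[
V_g\big(a_t(x,D)f\big)(X)=\int_{\mathbb R^{2n}} K(X,Y)\,V_gf(Y)\,dY ,
\]
using the inversion formula $f=\|g\|_2^{-2}\int V_gf(Y)\,\pi(Y)g\,dY$. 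Then
\[
K(X,Y)=\|g\|_2^{-2}\,\langle a_t(x,D)\pi(Y)g,\pi(X)g\rangle .
\]
Since $v^m(X)\lesssim v^m(Y)v^{|m|}(X-Y)$, the target estimate for $0<p,q\le\infty$ reduces to a pointwise bound of the form
\[
v(X)^m|K(X,Y)|\le C_N\,v(X-Y)^{-N}
\]
for every $N$.

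The first step is to rewrite $K$ as an oscillatory integral in phase space. The matrix coefficient $\langle a_t\pi(Y)g,\pi(X)g\rangle$ equals a twisted integral of $a$ against the cross-Wigner (or $t$-Wigner) distribution of $\pi(Y)g$ and $\pi(X)g$. After the change of variables centered at $\tfrac{X+Y}{2}$, this distribution is a Schwartz function of the integration variable, multiplied by a phase $e^{-i(X-Y)\cdot Z}$ (up to a symplectic rotation of $X-Y$). I would then apply Lemma \ref{lempq3s} with $W$ equal to $X-Y$ suitably rotated, and integrate by parts $N$ times. This gains the factor $(q+P(w)+Q(\tau))^{-N}\simeq v(X-Y)^{-2N}$. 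The derivatives $P_y(D),Q_\eta(D)$ fall either on the Schwartz part, which is harmless, or on $a$. Derivatives of $a\in\Sigma^{-m}_{P,Q}$ only improve the decay, since \eqref{sigmacl} gives $|\partial^\beta_x\partial^\alpha_\xi a|\lesssim v^{-m}$. What remains is $|a|$ evaluated along the shifted point, and it is bounded by $v(Z+\text{center})^{-m}$.

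The second step is to convert $v(\cdot)^{-m}$ at the integration point into $v(X)^{-m}$. Here I would use the submultiplicativity of $v$ (after the normalisation $v=C\widetilde v$) and the fact that the remaining integrand is Schwartz in $Z$. This gives
\[
v(\text{point})^{-m}\lesssim v(X)^{-m}\,v(X-Y)^{|m|}\,v(Z)^{|m|},
\]
where the factor $v(X-Y)^{|m|}$ is absorbed by choosing $N$ larger. The constants involve only the finitely many seminorms $\|a\|_{\alpha,\beta}$ with $|\alpha|\le 2\ell N$, $|\beta|\le 2kN$. This is the claimed dependence on finitely many seminorms.

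The final step handles the mixed-norm kernel bound. For $p,q\ge1$, Young's inequality on $L^{p,q}$ with the integrable envelope $v(X-Y)^{-N}$ suffices. For $p$ or $q<1$ one cannot integrate directly. I would instead discretize the STFT over a lattice, using the Gabor frame characterisation of $\M^{p,q}_{w}$ from \cite{groch:bK}, so that the kernel becomes a matrix $\big(K(\lambda,\mu)\big)$ with off-diagonal decay $v(\lambda-\mu)^{-N}$. Such matrices are bounded on $\ell^{p,q}$ for all $0<p,q\le\infty$ once $N$ is large, by the $r$-triangle inequality with $r=\min(1,p,q)$.

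I expect the main obstacle to be the integration-by-parts step for a general $t$-quantization with anisotropic polynomial operators $P_y(D),Q_\eta(D)$. The difficulty is keeping track of which variable pairs with $x$-derivatives and which with $\xi$-derivatives after the symplectic rotation, so that the gain really is $q+P(w)+Q(\tau)$ in the correct arguments. A cleaner alternative, which I would try first, is to avoid this bookkeeping: apply Lemma \ref{lempq3s} directly to the phase $e^{-iW\cdot Y}$ appearing in the STFT of the $t$-kernel, and choose the roles of $P$ and $Q$ so that they match the $x$- and $\xi$-directions of $a$.
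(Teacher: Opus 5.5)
Your argument is correct, but it is organized differently from the paper's proof.

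\textbf{The paper's route.} The paper never estimates the operator directly. It proves the symbol bound $|V_g a(Z,W)|\le C_N v(Z)^{-m}(1+|w|^k+|\tau|^\ell)^{-2N}$ by Lemma \ref{lempq3s} and integration by parts, with $P_y(D)$ acting in the $x$-variable of $a$ and $Q_\eta(D)$ in its $\xi$-variable. From this it deduces $a\in\M^{\infty,r}_{\omega_0}(\mathbb{R}^{2n})$ with $r\le\min\{1,p,q\}$. It then quotes Toft's Theorem 3.1, which handles all $t$-quantizations, the quasi-Banach range and the weights at once.

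\textbf{Your route.} You re-derive that black box. Write $X=(x,\xi)$ and $Y=(y,\eta)$. Up to constants, a unimodular factor and sign conventions, the Gabor matrix $\langle a_t(x,D)\pi(Y)g,\pi(X)g\rangle$ equals $V_{\Phi}a(Z_c,W)$. Here $\Phi$ is the (conjugated) $t$-Wigner distribution of $g$, and
\[
Z_c=(tx+(1-t)y,\,(1-t)\xi+t\eta),\qquad W=(\xi-\eta,\,y-x).
\]
So the same symbol estimate gives an almost-diagonal kernel, and Young's inequality (or the discretized $r$-triangle argument) finishes.

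\textbf{What each buys.} Your route makes explicit what the paper compresses into the phrase ``a weight $\omega_0$ compatible with $\omega_1$ and $\omega_2$''. Your moderateness step trades $v(Z_c)^{-m}$ for $v(X)^{-m}$ at the cost of $v(X-Y)^{|m|}$; that is exactly Toft's compatibility condition. The $t$-dependence of the centre also becomes visible. The price is the quasi-Banach discretization. For it you need the Gabor-frame characterization of $\M^{p,q}$ for $0<p,q<1$ (Galperin--Samarah), since Gr\"ochenig's book only covers $p,q\ge 1$.

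\textbf{Two minor points.} First, for $t\neq\frac12$ you should centre at $Z_c$, not at $\frac{X+Y}{2}$. Centring at $\frac{X+Y}{2}$ still works, but it shifts the Schwartz factor by a linear function of $X-Y$ and costs one more power of $v(X-Y)$.

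Second, the obstacle you anticipate is not real. Because of the symplectic rotation, pairing $P$ with the $x$-direction of $a$ gives
\[
(1+|\xi-\eta|^{2k}+|x-y|^{2\ell})^{-N}
\]
rather than $v(X-Y)^{-2N}$. However, the anisotropic derivative gains of $\Sigma^{-m}_{P,Q}$ are never used here; you only need $|\partial^\gamma a|\lesssim v^{-m}$ for all $\gamma$. So any polynomial decay in $X-Y$ absorbs $v(X-Y)^{|m|}$ once $N$ is large, and even $(1-\Delta_Z)^N$ would do.
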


\begin{proof}
Let $v(x,\xi):=(q_0+P(x)+Q(\xi))^{1/2},$
where $q_0>0$ is chosen sufficiently large. We consider the modulation weights $\omega_1(x,\xi)=1,
\,\,
\omega_2(x,\xi)=v(x,\xi)^m=(q_0+P(x)+Q(\xi))^{m/2}.$
By Theorem 3.1 of \cite{toft1:mod}, the boundedness $a_t(x,D):\M^{p,q}\rightarrow \M_{v^m}^{p,q}$
will follow once we prove that the symbol $a$ belongs to a suitable modulation space of symbols, $a\in \M_{\omega_0}^{\infty,r}(\mathbb{R}^{2n}),$
for some $r\leq \min\{1,p,q\}$, with a weight $\omega_0$ compatible with $\omega_1$ and $\omega_2$. In the present case, it is enough to prove the estimate
\[
|V_g a(Z,W)|
\leq
C_N v(Z)^{-m}
(1+|w|^k+|\tau|^\ell)^{-2N},
\]
for all $N\in\mathbb{N}$ sufficiently large, where $Z=(z,\gamma),\,\, W=(w,\tau)\in\mathbb{R}^{2n}.$

We now prove this estimate. For $Y=(y,\eta)$ and $W=(w,\tau)$, Lemma \ref{lempq3s} gives
\begin{equation}\label{modexp1a}
e^{-iW\cdot Y}
=
(q_0+P(w)+Q(\tau))^{-N}
\left(
\frac{q_0}{2}+P_y(D)+\frac{q_0}{2}+Q_\eta(D)
\right)^N
e^{-iW\cdot Y}.
\end{equation}
Here $P_y(D)$ acts in the $y$-variable and $Q_\eta(D)$ acts in the $\eta$-variable.

We also recall that, since $P\in\mathcal P_{2k}$ and $Q\in\mathcal P_{2\ell}$, for $q_0>0$ sufficiently large,
\[
q_0+P(x)+Q(\xi)
\asymp
1+|x|^{2k}+|\xi|^{2\ell}.
\]
Consequently,
\begin{equation}\label{in453}
v(x,\xi)
=
(q_0+P(x)+Q(\xi))^{1/2}
\asymp
1+|x|^k+|\xi|^\ell.
\end{equation}

Let $g\in\mathcal S(\mathbb{R}^{2n})$ be a fixed nonzero window. The short-time Fourier transform of the symbol $a$ is
\[
V_g a(Z,W)
=
\int_{\mathbb{R}^{2n}}
a(Y)\overline{g(Y-Z)}e^{-iW\cdot Y}\,dY.
\]
Using \eqref{modexp1a}, followed by integration by parts, we transfer the differential operator in \eqref{modexp1a} from the exponential to the factor $a(Y)\overline{g(Y-Z)}.$
Since $a\in\Sigma_{P,Q}^{-m}$ and $g\in\mathcal S(\mathbb{R}^{2n})$, the derivatives produced in this integration by parts are controlled by the seminorms of $a$ in $\Sigma_{P,Q}^{-m}$ and by the rapid decay of $g$. Hence, for every $N\in\mathbb{N}$, there exists $C_N>0$ such that
\begin{equation}\label{tt67}
|V_g a(Z,W)|
\leq
C_N
v(Z)^{-m}
(q_0+P(w)+Q(\tau))^{-N}.
\end{equation}
Using again \eqref{in453}, we get $(q_0+P(w)+Q(\tau))^{-N}
\lesssim
(1+|w|^k+|\tau|^\ell)^{-2N}.$
Therefore,
\[
|V_g a(Z,W)|
\leq
C_N
v(Z)^{-m}
(1+|w|^k+|\tau|^\ell)^{-2N}.
\]

Choosing $N$ large enough, for instance $N>\frac{n}{r\min\{k,\ell\}},$
ensures that the function $W\mapsto (1+|w|^k+|\tau|^\ell)^{-2N}$
belongs to $L^r(\mathbb{R}^{2n})$. Hence $a\in \M_{\omega_0}^{\infty,r}(\mathbb{R}^{2n})$
with the required symbol weight. The desired boundedness of $a_t(x,D)$ now follows from Theorem 3.1 of \cite{toft1:mod}. This completes the proof.
\end{proof}

The next result, concerning the symbolic structure of the fractional anharmonic oscillator, will be essential in the proof of the main result of this section.

\begin{theorem}\label{regansymb}
Let $s>0$. For $r_0\geq0$ sufficiently large, the fractional anharmonic oscillator $\mathcal{H}_{P,Q}^s:=(Q(D)+P(x)+r_0)^s$
is a pseudo-differential operator with real Weyl symbol $a_s(x,\xi):=\mathcal{H}_{P,Q}^s(x,\xi)\in \Sigma_{P,Q}^{2s}.$
More precisely, for $|\xi|^\ell+|x|^k\geq 1$, the symbol has the form
\begin{equation}\label{inet56}
\mathcal{H}_{P,Q}^s(x,\xi)
=
(Q(\xi)+P(x)+r_0)^s+r(x,\xi),
\end{equation}
where $r\in \Sigma_{P,Q}^{2s-\frac{k+\ell}{k\ell}}.$
\end{theorem}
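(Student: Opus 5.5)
We plan to deduce Theorem \ref{regansymb} from the functional calculus for $g$-elliptic operators in the Weyl--H\"ormander setting, with the metric $g=g^{(P,Q)}$ of \eqref{anhm01}. We use Proposition \ref{sht57} to identify $\Sigma_{P,Q}^m$ with $S(\lambda_g^{m k\ell/(k+\ell)},g)$. The Planck function is $h_g=\lambda_g^{-1}\simeq (r_0+P+Q)^{-\frac{k+\ell}{2k\ell}}$. Hence gaining one power of $h_g$ in $S(M,g)$ lowers the $\Sigma$-order by exactly $\frac{k+\ell}{k\ell}$, which is the gain claimed for $r$.

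\emph{Step 1: the principal part.} Set $p(x,\xi)=Q(\xi)+P(x)+r_0$. The Weyl symbol of $Q(D)+P(x)+r_0$ is exactly $p$, since $Q(D)$ and $P(x)$ have no Weyl correction terms. Taking $r_0\ge p_0+q_0$ large, $p\asymp q+P+Q$ is bounded below. Since $p\in\Sigma^2_{P,Q}$, induction via the Fa\`a di Bruno formula shows that $p^{z}\in\Sigma^{2\operatorname{Re}z}_{P,Q}$ for every complex $z$. The point is that each derivative $\partial_x^\beta\partial_\xi^\alpha p$ is bounded by $p^{1-|\beta|/2k-|\alpha|/2\ell}$, so quotients $\partial p/p$ gain exactly the required powers.

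\emph{Step 2: parametrix for the resolvent and for $\mathcal H_{P,Q}^s$.} We use the Beals-type characterization of $S(1,g)$ operators together with $g$-ellipticity, in the spirit of \cite{bufa:hy}. We construct the resolvent $(\mathcal H_{P,Q}-\lambda)^{-1}$ for $\lambda$ on a contour $\Gamma$ around $[\lambda_0,\infty)$, with parameter-dependent symbol
\[
(p-\lambda)^{-1}+r_{-1}(\lambda)+\cdots,
\]
where the remainders carry an extra factor $h_g^{j}$ and have uniform bounds in $|\lambda|$. Because the composition formula for $g$ has expansion terms of order $h_g^2$ relative to the product (the Weyl calculus has no first-order term), the error is actually in $S(h_g^2 M,g)$, which in particular lies in $S(h_g M,g)$. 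For $s<0$ we then take the Dunford integral
\[
\mathcal H_{P,Q}^{s}=\frac{1}{2\pi i}\int_\Gamma \lambda^{s}(\mathcal H_{P,Q}-\lambda)^{-1}\,d\lambda .
\]
Termwise, this gives principal symbol $p^{s}$ and remainder in $\Sigma^{2s-\frac{k+\ell}{k\ell}}_{P,Q}$. For $s>0$ we write $s=N+s'$ with $s'<0$ and set $\mathcal H_{P,Q}^s=\mathcal H_{P,Q}^{N}\mathcal H_{P,Q}^{s'}$. The composition formula keeps the principal part $p^{N}p^{s'}=p^s$, and the remaining terms land in $\Sigma^{2s-\frac{k+\ell}{k\ell}}_{P,Q}$. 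The symbol is real because $\mathcal H_{P,Q}^s$ is self-adjoint, and self-adjoint Weyl operators have real Weyl symbols.

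\emph{Step 3: main obstacle.} The difficult step is controlling the resolvent remainder uniformly in $\lambda$ along $\Gamma$ so that the contour integral converges in the symbol seminorms. The exact resolvent differs from the parametrix by a smoothing operator whose symbol must be shown to lie in $\bigcap_m\Sigma^{m}_{P,Q}$, with decay in $|\lambda|$. We will first try the standard device of treating $\lambda$ as an extra weight, setting $M_\lambda=p+|\lambda|$. We then check that $g$ stays a H\"ormander metric with $M_\lambda$ a uniform $g$-weight, and apply Beals' characterization of $S(1,g)$ operators via commutators to the exact resolvent. The restriction to $|\xi|^\ell+|x|^k\ge1$ in \eqref{inet56} merely reflects that near the origin everything is absorbed into the remainder, since $r_0$ makes $p$ harmlessly bounded there.
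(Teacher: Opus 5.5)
Your proposal is correct and has the same skeleton as the paper's proof. The Weyl symbol of $\mathcal H_{P,Q}$ is exactly $p=Q+P+r_0$, it is $g$-elliptic in $\Sigma^2_{P,Q}$, and a complex-powers theorem for the metric $g^{(P,Q)}$ gives $a_s\in\Sigma^{2s}_{P,Q}$ with principal part $p^s$ and a remainder gaining one factor $h_g=(q+P+Q)^{-\frac{k+\ell}{2k\ell}}$, i.e.\ $\frac{k+\ell}{k\ell}$ in the $\Sigma$-order. The difference is that the paper uses that theorem as a black box (a one-line appeal to \cite[Theorem 4.3.6]{NicolaRodino}), while you sketch its proof: a parameter-dependent resolvent parametrix, the Dunford integral for $s<0$, the factorization $\mathcal H_{P,Q}^{s}=\mathcal H_{P,Q}^{N}\mathcal H_{P,Q}^{s'}$, and self-adjointness for the reality of $a_s$ (which the paper asserts without justification). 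The paper's route buys brevity. Yours shows where the exact gain $\frac{k+\ell}{k\ell}$ comes from and keeps every estimate inside the anisotropic class $S(M,g^{(P,Q)})$, which is what the statement actually needs. The paper's isotropic comparison \eqref{anhmet012hh}--\eqref{ineqclassa} replaces the two gains $(q+P+Q)^{-1/(2k)}$ and $(q+P+Q)^{-1/(2\ell)}$ by a common weaker one. So when $k\neq\ell$ it recovers neither $\Sigma^{2s}_{P,Q}$ nor the stated remainder class, and the cited theorem really has to be applied in the anisotropic calculus itself, as in \cite{bufa:hy}. Your Step 3, which you leave as a plan, is precisely what such a theorem supplies, so you can close it by citation instead of redoing the uniform-in-$\lambda$ Beals argument. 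One correction: the Weyl composition does have a first-order term, proportional to the Poisson bracket $\{a,b\}$. It vanishes in your compositions only because every symbol involved is a function of $p$ (e.g.\ $\{p,(p-\lambda)^{-1}\}=0$), and in any case a single gain of $h_g$ is all you need.
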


\begin{proof}
Let $p(x,\xi):=Q(\xi)+P(x)+r_0.$
For $r_0\geq0$ sufficiently large, the operator $\mathcal H_{P,Q}=Q(D)+P(x)+r_0$
is strictly positive. Moreover, its Weyl symbol is precisely $p(x,\xi)=Q(\xi)+P(x)+r_0,$
because $Q(D)$ depends only on $\xi$ and $P(x)$ depends only on $x$.

By the definition of the class $\Sigma_{P,Q}^{m}$, we have $p\in \Sigma_{P,Q}^{2}.$
Furthermore, $p$ is elliptic in the class associated to the metric $g^{(P,Q)}$. Indeed, for $q_0>0$ sufficiently large,
\[
q_0+P(x)+Q(\xi)
\asymp
1+|x|^{2k}+|\xi|^{2\ell},
\]
and hence $p(x,\xi)$ controls the natural order function of the calculus.

Recall that the uncertainty parameter of $g=g^{(P,Q)}$ is $\lambda_g(x,\xi)
=
(q_0+P(x)+Q(\xi))^{\frac{k+\ell}{2k\ell}},$
and therefore the Planck function is
\[
h_g(x,\xi)
=
\lambda_g(x,\xi)^{-1}
=
(q_0+P(x)+Q(\xi))^{-\frac{k+\ell}{2k\ell}}.
\]
Thus, one step in the symbolic expansion lowers the order by $\frac{k+\ell}{k\ell}.$ Since $\mathcal H_{P,Q}$ is a positive elliptic operator in the Weyl--H\"ormander calculus, the complex powers theorem for globally elliptic pseudo-differential operators, see \cite[Theorem 4.3.6]{NicolaRodino}, applies. It follows that $\mathcal H_{P,Q}^s$
is a pseudo-differential operator with Weyl symbol $a_s\in \Sigma_{P,Q}^{2s}.$
Moreover, the principal term of the symbol is $p(x,\xi)^s=(Q(\xi)+P(x)+r_0)^s,$
and the first remainder is lower by one power of the Planck function. Hence
\[
a_s(x,\xi)
-
(Q(\xi)+P(x)+r_0)^s
\in
\Sigma_{P,Q}^{2s-\frac{k+\ell}{k\ell}},
\]
which proves \eqref{inet56}.
\end{proof}

As a consequence of Theorem \ref{regansymb}, for every $s>0$ the operator $\mathcal H_{P,Q}^s$ is a pseudo-differential operator with positive elliptic Weyl symbol in the anharmonic symbol class $\Sigma_{P,Q}^{2s}$. Moreover, the spectrum of $\mathcal H_{P,Q}$ consists of a discrete sequence of eigenvalues diverging to infinity. If $\{\lambda_j\}_{j\geq0}$ denotes the eigenvalues, arranged in increasing order and counted with multiplicities, and if $P_j$ denotes the corresponding spectral projections, then for $t\geq0$ we define the fractional anharmonic heat semigroup by
\[
e^{-t\APQ^s}f
=
\sum_{j=0}^{\infty}
e^{-t\lambda_j^s}P_jf.
\]
Furthermore, by the heat kernel construction in the Weyl--H\"ormander calculus, see \cite[Theorem 4.5.1]{NicolaRodino}, the operator $e^{-t\APQ^s}$ is itself a pseudo-differential operator whose Weyl symbol satisfies uniform estimates that will be used in the proof of the next theorem.

We conclude this section with the main mapping estimate (Theorem \ref{mainthmest}) for the anharmonic fractional heat semigroup on modulation spaces. The proof is similar to that of \cite{Cardona}, but we discuss it here only for the short-time estimates in our general setting.

\begin{proof}[Proof of Theorem \ref{mainthmest}]
We divide the proof into two cases.

\medskip
\noindent\textbf{Case $0<t\leq 1$.}
We first prove the estimate in the small-time regime. By the embeddings of modulation spaces with respect to the exponents, it is enough to prove the result after replacing $p_2$ by $\min\{p_1,p_2\}$ and $q_2$ by $\min\{q_1,q_2\}$. Hence, throughout this part of the proof, we may assume without loss of generality that $p_2\leq p_1,\,\,
q_2\leq q_1.$
Consequently,
\[
\frac1{p_2}=\frac1{p_1}+\frac1{\widetilde p},
\qquad
\frac1{q_2}=\frac1{q_1}+\frac1{\widetilde q},
\]
with the usual interpretation when $\widetilde p=\infty$ or $\widetilde q=\infty$.

By Theorem \ref{regansymb}, the fractional anharmonic oscillator $\APQ^s$ is a pseudo-differential operator with real Weyl symbol $a_s(x,\xi)\in \Sigma_{P,Q}^{2s}.$
Moreover, $a_s$ is elliptic in the Weyl--H\"ormander calculus associated with the metric $g^{(P,Q)}$. Therefore, applying the heat kernel construction for globally elliptic pseudo-differential operators, see \cite[Theorem 4.5.1]{NicolaRodino}, the semigroup $e^{-t\APQ^s}$ is a pseudo-differential operator with Weyl symbol $u_t(x,\xi)$ such that, for every $N\geq 0$, $t^Nu_t$
belongs to a bounded subset of $\Sigma_{P,Q}^{-2sN}$, uniformly for $0<t\leq 1$.

Let $v(x,\xi):=(q+P(x)+Q(\xi))^{1/2},$
where $q>0$ is chosen sufficiently large. Since $t^Nu_t\in \Sigma_{P,Q}^{-2sN},$
Theorem \ref{modb12} gives the boundedness $(t^Nu_t)^w(x,D):
\M^{p_1,q_1}(\mathbb{R}^n)
\longrightarrow
\M_{v^{2sN}}^{p_1,q_1}(\mathbb{R}^n),$
with operator norm independent of $t\in(0,1]$. Hence $\Vert t^N e^{-t\APQ^s}f\Vert_{\M_{v^{2sN}}^{p_1,q_1}}
\leq
C\Vert f\Vert_{\M^{p_1,q_1}}.$
Together with the boundedness of $e^{-t\APQ^s}$ on $\M^{p_1,q_1}$, this yields
\begin{equation}
\label{weighted-stft-bound}
\Vert
(1+t^Nv^{2sN})V_g(e^{-t\APQ^s}f)
\Vert_{L^{p_1,q_1}}
\leq
C
\Vert f\Vert_{\M^{p_1,q_1}},
\end{equation}
uniformly for $0<t\leq 1$.

We now prove the embedding estimate
\begin{equation}
\label{aux456}
\Vert F\Vert_{L^{p_2,q_2}}
\leq
C t^{-\sigma}
\Vert
(1+t^Nv^{2sN})F
\Vert_{L^{p_1,q_1}},
\end{equation}
for every measurable function $F$ on $\mathbb{R}^{2n}$. By H\"older's inequality in mixed-norm spaces,
\begin{align*}
\Vert F\Vert_{L^{p_2,q_2}}
&=
\left\Vert
(1+t^Nv^{2sN})^{-1}
(1+t^Nv^{2sN})F
\right\Vert_{L^{p_2,q_2}}
\\
&\leq
\left\Vert
(1+t^Nv^{2sN})^{-1}
\right\Vert_{L^{\widetilde p,\widetilde q}}
\left\Vert
(1+t^Nv^{2sN})F
\right\Vert_{L^{p_1,q_1}}.
\end{align*}

Thus, it remains to estimate \[ \widetilde C(t) := \left\Vert (1+t^N v^{2sN})^{-1} \right\Vert_{L^{\widetilde p,\widetilde q}}. \] At this point we use only the coercivity assumptions \( P\in\mathcal P_{2k} \) and \( Q\in\mathcal P_{2\ell}. \) Choosing \(q>0\) sufficiently large, there exists a constant \(c>0\) such that \[ q+P(x)+Q(\xi) \geq c\bigl(1+|x|^{2k}+|\xi|^{2\ell}\bigr), \qquad x,\xi\in\mathbb R^n. \] Hence \[ v(x,\xi) = (q+P(x)+Q(\xi))^{1/2} \gtrsim 1+|x|^k+|\xi|^\ell, \] and therefore \[ v(x,\xi)^{2sN} \gtrsim (1+|x|^k+|\xi|^\ell)^{2sN}. \] Consequently, \[ (1+t^N v(x,\xi)^{2sN})^{-1} \lesssim \Bigl( 1+t^N(1+|x|^k+|\xi|^\ell)^{2sN} \Bigr)^{-1}, \] so that \[ \widetilde C(t) \lesssim \left\Vert \Bigl( 1+t^N(1+|x|^k+|\xi|^\ell)^{2sN} \Bigr)^{-1} \right\Vert_{L^{\widetilde p,\widetilde q}}. \]

We now use the anisotropic scaling. Since $t^N(1+|x|^k+|\xi|^\ell)^{2sN}
=
\left(
t^{1/(2s)}(1+|x|^k+|\xi|^\ell)
\right)^{2sN},$
we make the change of variables $X=t^{\frac1{2sk}}x,\,
\Xi=t^{\frac1{2s\ell}}\xi.$
Then $dx=t^{-\frac n{2sk}}\,dX,
\,\,
d\xi=t^{-\frac n{2s\ell}}\,d\Xi,$
and $t^{1/(2s)}|x|^k=|X|^k,
\,\,
t^{1/(2s)}|\xi|^\ell=|\Xi|^\ell.$
Moreover,
\[
t^{1/(2s)}
(1+|x|^k+|\xi|^\ell)
=
t^{1/(2s)}+|X|^k+|\Xi|^\ell
\geq
|X|^k+|\Xi|^\ell.
\]

Assume first that $\widetilde p,\widetilde q<\infty$. Then
\begin{align*}
&\left\Vert
\left(
1+t^N(1+|x|^k+|\xi|^\ell)^{2sN}
\right)^{-1}
\right\Vert_{L^{\widetilde p,\widetilde q}}^{\widetilde q}
\\
&\quad\leq
C
t^{-\frac{n\widetilde q}{2sk\widetilde p}}
t^{-\frac n{2s\ell}}
\int_{\mathbb{R}^n}
\left(
\int_{\mathbb{R}^n}
\left(
1+(|X|^k+|\Xi|^\ell)^{2sN}
\right)^{-\widetilde p}
dX
\right)^{\widetilde q/\widetilde p}
d\Xi.
\end{align*}
Choosing $N$ sufficiently large, the last integral is finite. Hence
\[
\left\Vert
\left(
1+t^N(1+|x|^k+|\xi|^\ell)^{2sN}
\right)^{-1}
\right\Vert_{L^{\widetilde p,\widetilde q}}
\leq
C
t^{-\frac n{2sk\widetilde p}}
t^{-\frac n{2s\ell\widetilde q}}.
\]
That is,
\[
\widetilde C(t) \lesssim \left\Vert
\left(
1+t^N(1+|x|^k+|\xi|^\ell)^{2sN}
\right)^{-1}
\right\Vert_{L^{\widetilde p,\widetilde q}}
\leq
C
t^{-\frac n{2s}
\left(
\frac1{k\widetilde p}
+
\frac1{\ell\widetilde q}
\right)}
=
Ct^{-\sigma}.
\]
The endpoint cases $\widetilde p=\infty$ or $\widetilde q=\infty$ are handled in the same way, with the corresponding essential suprema replacing the relevant integrals. Thus
\[
\widetilde C(t)\leq Ct^{-\sigma},
\qquad
0<t\leq 1.
\]

Consequently, \eqref{aux456} holds. Applying \eqref{aux456} with $F=V_g(e^{-t\APQ^s}f)$
and using \eqref{weighted-stft-bound}, we obtain
\[
\Vert e^{-t\APQ^s}f\Vert_{\M^{p_2,q_2}}
\leq
C t^{-\sigma}
\Vert f\Vert_{\M^{p_1,q_1}},
\qquad
0<t\leq 1.
\]
This proves the desired estimate in the small-time regime.

\medskip

\noindent\textbf{Case $t\geq1$.} The proof is exactly similar to \cite{Cardona} so we will not repeat it here.

This completes the proof.
\end{proof}

\section{Mapping properties of anharmonic fractional heat semigroups on Lebesgue spaces}
\label{Lpprop}

In this section, we present the proof of Theorem \ref{Lpmapping}, i.e.,   the $L^p$-$L^q$ time-decay of the fractional heat semigroups $e^{-t\APQ^s}.$

\begin{proof}[Proof of Theorem \ref{Lpmapping}]
We first prove the estimate for large time, namely $t>1$. We use the standard embeddings
\begin{equation}
\label{inh9dj}
    L^p(\mathbb R^n)\hookrightarrow \mathcal M^{p,\infty}(\mathbb R^n),
    \qquad
    \mathcal M^{q,1}(\mathbb R^n)\hookrightarrow L^q(\mathbb R^n),
    \qquad 1\leq p,q\leq\infty .
\end{equation}
Applying Theorem \ref{mainthmest} with $p_1=p,\,\, p_2=q,\,\, q_1=\infty,\,\, q_2=1,$
we obtain, for $t>1$,
\[
\begin{aligned}
    \Vert e^{-t\APQ^s}f\Vert_{L^q}
    \lesssim
    \Vert e^{-t\APQ^s}f\Vert_{\mathcal M^{q,1}}       \leq
    C e^{-t\lambda_0^s}
    \Vert f\Vert_{\mathcal M^{p,\infty}}                
    \lesssim
    C e^{-t\lambda_0^s}
    \Vert f\Vert_{L^p}.
\end{aligned}
\]
This proves \eqref{lpest}.

We now consider the case $0<t\leq1$. By Theorem \ref{regansymb}, the operator $\APQ^s$ is a pseudo-differential operator whose Weyl symbol belongs to $\Sigma_{P,Q}^{2s}$. More precisely, its principal part has the form $a_s(x,\xi)
    =
    \big(Q(\xi)+P(x)+r_0\big)^s
    +
    R(x,\xi),$
where $R\in
    \Sigma_{P,Q}^{2s-\frac{k+\ell}{k\ell}}.$
Passing from Weyl to Kohn--Nirenberg quantization only changes the symbol by lower-order terms. Hence, we may write the corresponding Kohn--Nirenberg symbol as
\[
    a_s(x,\xi)=a(x,\xi)+R'(x,\xi),
\]
where $R'$ is of lower order and where $a\in\Sigma_{P,Q}^{2s}$ satisfies the elliptic lower bound
\begin{equation}
\label{lw29a}
    a(x,\xi)
    \geq
    c\big(1+Q(\xi)+P(x)\big)^s
    \geq
    c\big(1+|x|^{2k}+|\xi|^{2\ell}\big)^s .
\end{equation}
In particular,
\[
    a(x,\xi)
    \geq
    c_1 |x|^{2k s}+c_2|\xi|^{2\ell s}-C .
\]

By the heat parametrix expansion in the global calculus, see \cite[Theorem 4.5.1]{NicolaRodino}, the Kohn--Nirenberg symbol of $e^{-t\APQ^s}$ can be written, for arbitrary $J\geq1$, as
\[
    u_t(x,\xi)
    =
    b_t(x,\xi)+R_{t,J}(x,\xi),
\]
where
\[
    b_t(x,\xi)
    =
    e^{-t a(x,\xi)}
    \left(
    1+
    \sum_{j=1}^{J-1}
    \sum_{\ell=1}^{2j}
    t^\ell u_{\ell,j}(x,\xi)
    \right),
\]
with
\[
    u_{\ell,j}\in
    \Sigma_{P,Q}^{2s-\frac{k+\ell}{k\ell}j},
\]
and the remainder satisfies, uniformly for $0<t\leq1$, $R_{t,J}\in \Sigma_{P,Q}^{-2J}.$
Choosing $J$ sufficiently large, the operator $R_{t,J}(x,D)$ is smoothing enough to satisfy
\begin{equation}
\label{remLpLq}
    \Vert R_{t,J}(x,D)f\Vert_{L^q}
    \leq
    C\Vert f\Vert_{L^p},
    \qquad 1\leq p,q\leq\infty .
\end{equation}
Since $t^{-\sigma_s}\geq1$ for $0<t\leq1$, the contribution of the remainder is always dominated by the right-hand side of \eqref{lpest1}. Thus it remains to estimate the operator with symbol $b_t$.

We first prove the $L^1\to L^\infty$ estimate. From \eqref{lw29a} and from the symbolic estimates for the coefficients $u_{\ell,j}$, we have
\[
    |b_t(x,\xi)|
    \leq
    C e^{-c t|\xi|^{2\ell s}},
    \qquad 0<t\leq1.
\]
Let $K_t(x,y)$ denote the integral kernel of $b_t(x,D)$. Then
\[
    K_t(x,y)
    =
    (2\pi)^{-n}
    \int_{\mathbb R^n}
    e^{i(x-y)\cdot\xi}
    b_t(x,\xi)\,d\xi .
\]
Therefore,
\[
\begin{aligned}
    |K_t(x,y)|
    &\leq
    C\int_{\mathbb R^n}
    e^{-c t|\xi|^{2\ell s}}\,d\xi          \\
    &\leq
    C t^{-\frac{n}{2\ell s}}.
\end{aligned}
\]
It follows that
\begin{equation}
\label{L1Linf}
    \Vert b_t(x,D)f\Vert_{L^\infty}
    \leq
    C t^{-\frac{n}{2\ell s}}
    \Vert f\Vert_{L^1}.
\end{equation}
Together with \eqref{remLpLq}, this proves case {\it (ii)}.

Next we prove the estimates in the region $q\leq p$. We first claim that, for every pair of multi-indices $\alpha,\beta$,
\begin{equation}
\label{intxo76}
    \left|
    \partial_x^\alpha\partial_\xi^\beta
    \left[
    e^{\frac{t}{4}\langle x\rangle^{2k s}}
    b_t(x,\xi)
    \right]
    \right|
    \leq
    C_{\alpha,\beta}
    (1+|\xi|)^{-|\beta|},
    \qquad 0<t\leq1.
\end{equation}
Indeed, derivatives of the weight satisfy
\[
    \left|
    \partial_x^\alpha
    e^{\frac{t}{4}\langle x\rangle^{2k s}}
    \right|
    \leq
    C_\alpha
    e^{\frac{t}{2}\langle x\rangle^{2k s}}
    \langle x\rangle^{-|\alpha|}.
\]
On the other hand, since $a\in\Sigma_{P,Q}^{2s}$,
\[
    \left|
    \partial_x^\alpha\partial_\xi^\beta a(x,\xi)
    \right|
    \leq
    C_{\alpha,\beta}
    a(x,\xi)
    \big(q+P(x)+Q(\xi)\big)^{
    -\frac{|\alpha|}{2k}
    -\frac{|\beta|}{2\ell}} .
\]
Consequently,
\[
    \left|
    \partial_x^\alpha\partial_\xi^\beta
    e^{-t a(x,\xi)}
    \right|
    \leq
    C_{\alpha,\beta}
    e^{-\frac t2 a(x,\xi)}
    \big(q+P(x)+Q(\xi)\big)^{
    -\frac{|\alpha|}{2k}
    -\frac{|\beta|}{2\ell}} .
\]
Combining these estimates with the symbolic bounds for the coefficients
$u_{\ell,j}$ and using Leibniz' formula gives \eqref{intxo76}. Hence $e^{\frac{t}{4}\langle x\rangle^{2k s}}b_t(x,D)$
is a pseudo-differential operator with the symbol bounded uniformly in
$S_{1,0}^0$. Therefore, by the $L^p$ boundedness of operators with symbols in
$S_{1,0}^0$, we obtain, for $1<p<\infty$,
\begin{equation}
\label{ppCt}
    \Vert
    e^{\frac{t}{4}\langle x\rangle^{2k s}}
    b_t(x,D)f
    \Vert_{L^p}
    \leq
    C\Vert f\Vert_{L^p}.
\end{equation}

Let now $1\leq q\leq p<\infty$ with $p>1$. By H\"older's inequality,
\begin{equation}
\label{holder56}
    \Vert
    e^{-\frac{t}{4}\langle x\rangle^{2k s}}g
    \Vert_{L^q}
    \leq
    C t^{-\frac{n}{2k s}\left(\frac1q-\frac1p\right)}
    \Vert g\Vert_{L^p}.
\end{equation}
Indeed, if $q<p$, then
\[
    \frac1r=\frac1q-\frac1p,
\]
and
\[
    \Vert
    e^{-\frac{t}{4}\langle x\rangle^{2k s}}
    \Vert_{L^r}
    \leq
    C t^{-\frac{n}{2k s}\left(\frac1q-\frac1p\right)}.
\]
The case $q=p$ is trivial.

Applying \eqref{holder56} to
\[
    g=
    e^{\frac{t}{4}\langle x\rangle^{2k s}}
    b_t(x,D)f
\]
and using \eqref{ppCt}, we get
\[
    \Vert b_t(x,D)f\Vert_{L^q}
    \leq
    C
    t^{-\frac{n}{2k s}\left(\frac1q-\frac1p\right)}
    \Vert f\Vert_{L^p}.
\]
Together with the remainder estimate \eqref{remLpLq}, this proves
\[
    \Vert e^{-t\APQ^s}f\Vert_{L^q}
    \leq
    C
    t^{-\frac{n}{2k s}\left(\frac1q-\frac1p\right)}
    \Vert f\Vert_{L^p},
\]
for $1<p<\infty$ and $1\leq q\leq p$. In particular, taking $q=1$ gives case {\it (iv)}.

It remains to prove the estimates in the region $p\leq q$, with
$p,q\in(1,\infty)$. We already have the endpoint estimate
\[
    L^1\to L^\infty,
    \qquad
    \Vert e^{-t\APQ^s}f\Vert_{L^\infty}
    \leq
    C t^{-\frac{n}{2\ell s}}
    \Vert f\Vert_{L^1},
\]
and we also have, from the previous part with $p=q=r$,
\[
    \Vert e^{-t\APQ^s}f\Vert_{L^r}
    \leq
    C\Vert f\Vert_{L^r},
    \qquad 1<r<\infty.
\]
Interpolating between these two bounds by the Riesz--Thorin theorem gives
\[
    \Vert e^{-t\APQ^s}f\Vert_{L^q}
    \leq
    C
    t^{-\frac{n}{2\ell s}\left(\frac1p-\frac1q\right)}
    \Vert f\Vert_{L^p},
\]
for $1<p\leq q<\infty$. This completes the proof of case {\it (i)}.

Finally, we prove case {\it (iii)}. Let $f\in L^1(\mathbb R^n)\cap L^2(\mathbb R^n)$. Since the semigroup is self-adjoint on $L^2$,
\[
\begin{aligned}
    \Vert e^{-t\APQ^s}f\Vert_{L^2}^2
    =
    \left\langle
    e^{-t\APQ^s}f,
    e^{-t\APQ^s}f
    \right\rangle                         =
    \left\langle
    e^{-2t\APQ^s}f,
    f
    \right\rangle                         \leq
    \Vert e^{-2t\APQ^s}f\Vert_{L^\infty}
    \Vert f\Vert_{L^1}.
\end{aligned}
\]
Using the $L^1\to L^\infty$ estimate with time $2t$, we obtain $\Vert e^{-t\APQ^s}f\Vert_{L^2}^2
    \leq
    C t^{-\frac{n}{2\ell s}}
    \Vert f\Vert_{L^1}^2.$
Hence
\begin{equation}
\label{L1L2}
    \Vert e^{-t\APQ^s}f\Vert_{L^2}
    \leq
    C t^{-\frac{n}{4\ell s}}
    \Vert f\Vert_{L^1}.
\end{equation}
Interpolating \eqref{L1L2} with the $L^1\to L^\infty$ estimate
\eqref{L1Linf}, we obtain, for every $2\leq q<\infty$,
\[
    \Vert e^{-t\APQ^s}f\Vert_{L^q}
    \leq
    C
    t^{-\frac{n}{2\ell s}\left(1-\frac1q\right)}
    \Vert f\Vert_{L^1}.
\]
This proves case {\it (iii)} and completes the proof of the theorem.
\end{proof}

 \section{Local and global well-posedness and lower blow-up rate for nonlinear fractional heat equations associated with anharmonic oscillators}
\label{application}

In this section we study the nonlinear fractional heat equation associated with the anharmonic oscillator $\APQ$:
\begin{equation}
\label{eq6.1}
    \begin{cases}
        \partial_t u+\APQ^s u=|u|^{\beta-1}u,\\
        u(0,x)=u_0(x),
    \end{cases}
\end{equation}
for $(t,x)\in(0,\infty)\times\mathbb R^n$, where $\beta>1$ and $s>0$.

We first prove local well-posedness in the supercritical Lebesgue range.

\begin{theorem}
\label{them6.1}
Let $1<p<\infty$, $s>0$, $\beta>1$, and $u_0\in L^p(\mathbb R^n)$. Assume that
\[
    p>p_c^s:=\frac{n(\beta-1)}{2\ell s}.
\]
Then there exists $T>0$ such that \eqref{eq6.1} has a mild solution $ u\in C([0,T],L^p(\mathbb R^n)).$
Moreover, the solution extends to a maximal interval $[0,T_{\max})$ such that either $T_{\max}=\infty$, or $T_{\max}<\infty$ and $ \lim_{t\to T_{\max}}
    \Vert u(t)\Vert_{L^p(\mathbb R^n)}
    =
    \infty.$
\end{theorem}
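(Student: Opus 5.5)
We run a Weissler-type fixed-point argument for the Duhamel formulation
\[
u(t)=e^{-t\APQ^s}u_0+\int_0^t e^{-(t-\tau)\APQ^s}\big(|u|^{\beta-1}u\big)(\tau)\,d\tau=:\Phi(u)(t),
\]
using only the short-time $L^p$--$L^q$ estimates of Theorem \ref{Lpmapping}. Since $\APQ^s$ is self-adjoint and positive, $e^{-t\APQ^s}$ is a contraction on $L^2$. By Theorem \ref{Lpmapping}(i) with $p=q$ it is also uniformly bounded on $L^p$ for $0<t\le1$, and it is strongly continuous on $L^p$ for $1<p<\infty$. This follows by density from the $L^2$ strong continuity together with uniform boundedness and interpolation. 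Hence the free term $e^{-t\APQ^s}u_0$ belongs to $C([0,T],L^p)$.

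\textbf{Choice of exponent and working space.} Since $p>p_c^s$ and $\beta>1$, we choose an auxiliary exponent $r$ with $\max\{p,\beta\}<r<\infty$ (in particular $r>p$). We set
\[
\alpha:=\frac{n}{2\ell s}\Big(\frac1p-\frac1r\right),
\]
which is the rate from Theorem \ref{Lpmapping}(i) in the regime $p\le r$. The nonlinear term $|u|^{\beta-1}u$ lies in $L^{r/\beta}$ when $u\in L^r$. When $r/\beta\le r$, the map $L^{r/\beta}\to L^r$ again uses case (i) with $p\le q$, giving the rate $\gamma=\frac{n(\beta-1)}{2\ell s r}$. We need $r/\beta>1$, which holds since $r>\beta$.

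For $T\le1$ we work in
\[
X_T=\Big\{u:\ \sup_{0<t\le T}t^{\alpha}\|u(t)\|_{L^r}\le M\Big\},
\]
equipped with the metric $d(u,v)=\sup_{0<t\le T} t^\alpha\|u(t)-v(t)\|_{L^r}$.

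\textbf{Key estimates.} First, $t^\alpha\|e^{-t\APQ^s}u_0\|_{L^r}\le C\|u_0\|_{L^p}$. We also need this quantity to tend to $0$ as $t\to0$; this follows by approximating $u_0$ by $L^p\cap L^r$ data, which is needed for the argument to work with large data. Second, the pointwise inequality $\big||u|^{\beta-1}u-|v|^{\beta-1}v\big|\lesssim(|u|^{\beta-1}+|v|^{\beta-1})|u-v|$ combined with Hölder gives
\[
t^\alpha\Big\|\int_0^t e^{-(t-\tau)\APQ^s}(\cdots)\,d\tau\Big\|_{L^r}\le C M^{\beta-1}d(u,v)\,t^{\alpha}\int_0^t (t-\tau)^{-\gamma}\tau^{-\alpha\beta}\,d\tau .
\]
The integral converges precisely when $\gamma<1$ and $\alpha\beta<1$, and it equals $C\,t^{1-\gamma-\alpha\beta}$. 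The total power of $t$ is therefore $1-\gamma-\alpha(\beta-1)=1-\frac{n(\beta-1)}{2\ell s p}>0$, which is exactly the hypothesis $p>p_c^s$. The conditions $\gamma<1$ and $\alpha\beta<1$ can be arranged by taking $r$ close to $p$ (provided $p>\beta$), or otherwise by choosing $r$ suitably. This choice of $r$ is the main technical obstacle: when $p<\beta$ the exponent $r/\beta$ may fall below $1$, and in that case the admissible $r$ must be chosen with care.

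Once $r$ is fixed, taking $T$ small makes $\Phi$ a contraction on $X_T$. The same computation with target $L^p$ (from $L^{r/\beta}$, again in the regime $p\ge r/\beta$ or $p\le r/\beta$, handled by case (i)) shows that $u\in C([0,T],L^p)$.

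\textbf{Maximal interval and blow-up alternative.} The existence time $T$ depends only on $\|u_0\|_{L^p}$, through a lower bound $T\ge T(\|u_0\|_{L^p})>0$, provided the first estimate is made uniform. Using the smallness $\sup_{t\le T}t^\alpha\|e^{-t\APQ^s}u_0\|_{L^r}\le\eta$, which holds uniformly on compact subsets of $L^p$ and in particular on bounded sets after approximation, this is the delicate point. With it in hand, the standard continuation argument applies: if $T_{\max}<\infty$ and $\|u(t)\|_{L^p}$ stayed bounded along some sequence $t_j\to T_{\max}$, restarting from $u(t_j)$ would extend the solution past $T_{\max}$, a contradiction. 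This yields $\lim_{t\to T_{\max}}\|u(t)\|_{L^p}=\infty$.
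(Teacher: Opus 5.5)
Your scheme is the same Weissler-type contraction the paper uses; the paper takes $r=p\beta$ and also puts $\sup_t\|u(t)\|_{L^p}$ into the norm of $Z_T$. However, two steps are either left open or justified by an incorrect claim.

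\textbf{The choice of $r$.} You call this the main obstacle, but for $p\le\beta$ you never make it. You do not verify that some $r>\beta$ still gives $\alpha\beta<1$. Your remark that $r/\beta$ may fall below $1$ does not apply, since you already impose $r>\beta$. In fact the choice is automatic: any $r\in(\max\{p,\beta\},p\beta]$ works, for instance $r=p\beta$. Then $r/\beta\in(1,p]$, so both the step $L^{r/\beta}\to L^r$ and the step $L^{r/\beta}\to L^p$ fall in the regime $p\le q$ of Theorem \ref{Lpmapping}(i), and
\[
\alpha\beta\le\frac{n(\beta-1)}{2\ell sp}<1,\qquad \gamma<\frac{n(\beta-1)}{2\ell sp}<1,\qquad \frac{n}{2\ell s}\Big(\frac{\beta}{r}-\frac1p\Big)+\alpha\beta=\frac{n(\beta-1)}{2\ell sp}<1 .
\]
This gives both the contraction in $X_T$ and the bound $CM^\beta t^{1-\frac{n(\beta-1)}{2\ell sp}}$ for the Duhamel term in $L^p$. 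The upper bound $r\le p\beta$ matters. For $r>p\beta$ the step $L^{r/\beta}\to L^p$ lowers the exponent, and Theorem \ref{Lpmapping}(i) then gives only the rate $\frac{n}{2ks}\big(\frac1p-\frac{\beta}{r}\big)$. That rate involves $k$ and is not controlled by $p>p_c^s$, so your phrase \emph{handled by case (i)} would need an extra condition.

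\textbf{The maximal interval.} Your argument here rests on the assertion that $\sup_{0<t\le T}t^\alpha\|e^{-t\APQ^s}u_0\|_{L^r}\le\eta$ holds uniformly on bounded subsets of $L^p$ \emph{after approximation}. That does not follow. Approximation gives it for each fixed datum, or uniformly on compact sets, but not on bounded sets; data concentrating at a point with fixed $L^p$ norm defeat it, just as for the free heat flow.

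Fortunately no smallness is needed in the subcritical range; that device belongs to the critical case of Theorem \ref{criticalglobal}. Your own estimate already carries the factor $T^{1-\frac{n(\beta-1)}{2\ell sp}}$ with a positive exponent. Take $M=2C\|u_0\|_{L^p}$ and $T\le1$ with $CM^{\beta-1}T^{1-\frac{n(\beta-1)}{2\ell sp}}\le\frac12$. Then $\Phi$ maps the ball of radius $M$ into itself and contracts, with the free term merely bounded by $C\|u_0\|_{L^p}$, not small. This is exactly what the paper does with the ball $D_{M+1}$, and it gives an existence time depending only on $\|u_0\|_{L^p}$.

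With that in place, your sequential continuation argument (boundedness along some $t_j\to T_{\max}$ is impossible) is correct. It genuinely yields $\lim_{t\to T_{\max}}\|u(t)\|_{L^p}=\infty$, which is slightly more than the paper's contrapositive of $\sup_{0<t<T_{\max}}\|u(t)\|_{L^p}<\infty$ literally proves.
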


\begin{proof}
Let $T>0$. We introduce the space
\[
    Z_T
    =
    \left\{
    u:(0,T)\to L^p(\mathbb R^n)\cap L^{p\beta}(\mathbb R^n):
    \Vert u\Vert_{Z_T}<\infty
    \right\},
\]
where
\[
    \Vert u\Vert_{Z_T}
    :=
    \max\left\{
    \sup_{0<t<T}\Vert u(t)\Vert_{L^p},
    \sup_{0<t<T}
    t^{\frac{n(\beta-1)}{2p\ell s\beta}}
    \Vert u(t)\Vert_{L^{p\beta}}
    \right\}.
\]
Let $M_1>0$ be such that $ \Vert u_0\Vert_{L^p} \leq M_1.$
By Theorem \ref{Lpmapping}, applied with $(p,q)=(p,p)$ and $(p,q)=(p,p\beta)$, we have $   \Vert e^{-t\APQ^s}u_0\Vert_{L^p}
    \leq
    C\Vert u_0\Vert_{L^p}$
and $ \Vert e^{-t\APQ^s}u_0\Vert_{L^{p\beta}}
    \leq
    C t^{-\frac{n}{2\ell s}\left(\frac1p-\frac1{p\beta}\right)}
    \Vert u_0\Vert_{L^p}.$
Since $ \frac{n}{2\ell s}\left(\frac1p-\frac1{p\beta}\right)
    =
    \frac{n(\beta-1)}{2p\ell s\beta},$
it follows that
\[
    \Vert e^{-t\APQ^s}u_0\Vert_{Z_T}
    \leq
    C M_1.
\]
Choose $M>0$ such that $C M_1\leq M.$
Define the closed ball
\[
    D_{M+1}
    :=
    \left\{
    u\in Z_T:
    \Vert u\Vert_{Z_T}\leq M+1
    \right\}.
\]
For $u\in D_{M+1}$, define
\[
    \Psi[u]
    :=
    e^{-t\APQ^s}u_0
    +
    \int_0^t
    e^{-(t-\tau)\APQ^s}
    \big(|u(\tau)|^{\beta-1}u(\tau)\big)\,d\tau.
\]
We shall prove that, for $T>0$ small enough, $\Psi$ is a contraction on $D_{M+1}$.

Let $q\in\{p,p\beta\}$. Since $\Vert |u(\tau)|^{\beta-1}u(\tau)\Vert_{L^p}
    =
    \Vert u(\tau)\Vert_{L^{p\beta}}^\beta,$
Theorem \ref{Lpmapping} gives
\[
\begin{aligned}
\left\Vert
\int_0^t
e^{-(t-\tau)\APQ^s}
\big(|u(\tau)|^{\beta-1}u(\tau)\big)\,d\tau
\right\Vert_{L^q}
\leq
C
\int_0^t
(t-\tau)^{-\frac{n}{2\ell s}\left(\frac1p-\frac1q\right)}
\Vert u(\tau)\Vert_{L^{p\beta}}^\beta
\,d\tau.
\end{aligned}
\]
Since $u\in D_{M+1}$, $\Vert u(\tau)\Vert_{L^{p\beta}}
    \leq
    (M+1)
    \tau^{-\frac{n(\beta-1)}{2p\ell s\beta}},$ it further implies that
\[
\begin{aligned}
\left\Vert
\int_0^t
e^{-(t-\tau)\APQ^s}
\big(|u(\tau)|^{\beta-1}u(\tau)\big)\,d\tau
\right\Vert_{L^q}
\leq
C(M+1)^\beta
\int_0^t
(t-\tau)^{-\frac{n}{2\ell s}\left(\frac1p-\frac1q\right)}
\tau^{-\frac{n(\beta-1)}{2p\ell s}}
\,d\tau.
\end{aligned}
\]
The integral is finite because $\frac{n(\beta-1)}{2p\ell s}<1,$
which is exactly the assumption $p>p_c^s$. Hence,

\begin{align} \label{lab5.2}
\left\Vert
\int_0^t
e^{-(t-\tau)\APQ^s}
\big(|u(\tau)|^{\beta-1}u(\tau)\big)\,d\tau
\right\Vert_{L^q}
\leq
C(M+1)^\beta
t^{
1-\frac{n}{2\ell s}\left(\frac1p-\frac1q\right)
-\frac{n(\beta-1)}{2p\ell s}
}.
\end{align}

Multiplying by the corresponding weight in the definition of $Z_T$, we obtain
\[
    \Vert \Psi[u]\Vert_{Z_T}
    \leq
    M
    +
    C(M+1)^\beta
    T^{1-\frac{n(\beta-1)}{2p\ell s}}.
\]
Choosing $T>0$ sufficiently small so that
\[
    C(M+1)^\beta
    T^{1-\frac{n(\beta-1)}{2p\ell s}}
    \leq
    1,
\]
we get $\Psi[u]\in D_{M+1}$.

We next prove that $\Psi$ is a contraction. We use the standard inequality
\[
    \big|
    |a|^{\beta-1}a-|b|^{\beta-1}b
    \big|
    \leq
    C_\beta
    \big(|a|^{\beta-1}+|b|^{\beta-1}\big)|a-b|.
\]
By H\"older's inequality,
\[
\begin{aligned}
\Vert
|u|^{\beta-1}u-|v|^{\beta-1}v
\Vert_{L^p}
\leq
C_\beta
\left(
\Vert u\Vert_{L^{p\beta}}^{\beta-1}
+
\Vert v\Vert_{L^{p\beta}}^{\beta-1}
\right)
\Vert u-v\Vert_{L^{p\beta}}.
\end{aligned}
\]
Therefore, for $u,v\in D_{M+1}$ and $q\in\{p,p\beta\}$,
\[
\begin{aligned}
&\Vert \Psi [u]-\Psi [v]\Vert_{L^q}
\\
&\qquad
\leq
C_\beta
\int_0^t
(t-\tau)^{-\frac{n}{2\ell s}\left(\frac1p-\frac1q\right)}
\left(
\Vert u(\tau)\Vert_{L^{p\beta}}^{\beta-1}
+
\Vert v(\tau)\Vert_{L^{p\beta}}^{\beta-1}
\right)
\Vert u(\tau)-v(\tau)\Vert_{L^{p\beta}}
\,d\tau
\\
&\qquad
\leq
C(M+1)^{\beta-1}
\Vert u-v\Vert_{Z_T}
t^{
1-\frac{n}{2\ell s}\left(\frac1p-\frac1q\right)
-\frac{n(\beta-1)}{2p\ell s}
}.
\end{aligned}
\]
It follows that
\[
    \Vert \Psi[u]-\Psi[v]\Vert_{Z_T}
    \leq
    C(M+1)^{\beta-1}
    T^{1-\frac{n(\beta-1)}{2p\ell s}}
    \Vert u-v\Vert_{Z_T}.
\]
Taking $T>0$ smaller if necessary, we may assume that
\[
    C(M+1)^{\beta-1}
    T^{1-\frac{n(\beta-1)}{2p\ell s}}
    <
    \frac12.
\]
Thus $\Psi$ is a strict contraction on $D_{M+1}$. By Banach's fixed point theorem, there exists a unique fixed point $u\in D_{M+1}$, which is a mild solution of \eqref{eq6.1}.

It remains to show that $u\in C([0,T],L^p(\mathbb R^n)).$
Since
\[
    u(t)
    =
    e^{-t\APQ^s}u_0
    +
    \int_0^t
    e^{-(t-\tau)\APQ^s}
    |u(\tau)|^{\beta-1}u(\tau)\,d\tau,
\]
the integral term tends to zero in $L^p$ as $t\to0^+$ by the estimate already proved in \eqref{lab5.2}. Thus, it is enough to show that
\[
    e^{-t\APQ^s}u_0\to u_0
    \quad
    \text{in } L^p(\mathbb R^n)
    \quad
    \text{as } t\to0^+.
\]
Theorem \ref{Lpmapping} gives uniform boundedness of $e^{-t\APQ^s}$ on $L^p$ for $0<t\leq1$. Hence, by density, it suffices to prove the convergence on a dense subspace of $L^p$. Since finite linear combinations of eigenfunctions of $\APQ$ are dense in $\mathcal S(\mathbb R^n)$, and $\mathcal S(\mathbb R^n)$ is dense in $L^p(\mathbb R^n)$ for $1<p<\infty$, it is enough to consider finite sums of eigenfunctions. For such functions the convergence follows immediately from the spectral representation
\[
    e^{-t\APQ^s}\Phi_j
    =
    e^{-t\lambda_j^s}\Phi_j
    \to
    \Phi_j
    \quad
    \text{as }t\to0^+.
\]
Therefore $u\in C([0,T],L^p(\mathbb R^n))$.

The maximal extension and the blow-up alternative follow from the standard continuation argument: if $ \sup_{0<t<T_{\max}}
    \Vert u(t)\Vert_{L^p}<\infty,$ then the above local existence argument may be restarted at times close to $T_{\max}$, extending the solution beyond $T_{\max}$, which is impossible by maximality. Hence either $T_{\max}=\infty$, or $\lim_{t\to T_{\max}}
    \Vert u(t)\Vert_{L^p}
    =
    \infty.$
This completes the proof.
\end{proof}

We next derive a lower bound for the blow-up rate of finite-time blow-up solutions.

\begin{theorem}
\label{blowuprate}
Let $1<p<\infty$, $s>0$, $\beta>1$, and $u_0\in L^p(\mathbb R^n)$. Assume that
\[
    p>p_c^s:=\frac{n(\beta-1)}{2\ell s}.
\]
Let $u$ be the maximal solution of \eqref{eq6.1} obtained in Theorem \ref{them6.1}. If $T_{\max}<\infty$, then
\[
    \Vert u(t)\Vert_{L^p(\mathbb R^n)}
    \geq
    C
    (T_{\max}-t)^{
    \frac{n}{2p\ell s}
    -
    \frac{1}{\beta-1}
    }
\]
for all $0\leq t<T_{\max}$.
\end{theorem}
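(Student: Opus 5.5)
The plan is the standard Weissler-type argument: restart the fixed-point construction of Theorem \ref{them6.1} at an arbitrary time $t_0\in[0,T_{\max})$ and make the existence time quantitative in terms of $\Vert u(t_0)\Vert_{L^p}$. The key observation is that the contraction argument in Theorem \ref{them6.1} only needs, for some absolute constant $C_0$ (depending on $n,p,\beta,s,\ell$ and the constant in Theorem \ref{Lpmapping}, but not on the data), the two smallness conditions
\[
C_0(M+1)^\beta T^{1-\frac{n(\beta-1)}{2p\ell s}}\leq 1,
\qquad
C_0(M+1)^{\beta-1} T^{1-\frac{n(\beta-1)}{2p\ell s}}<\tfrac12,
\]
where $M=C\Vert u_0\Vert_{L^p}$. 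To avoid the ``$+1$'' (which spoils homogeneity), I would redo the ball with radius $2M$ instead of $M+1$. The conditions then become
\[
C_0 M^{\beta-1}T^{\gamma}\leq c,
\qquad
\gamma:=1-\frac{n(\beta-1)}{2p\ell s}>0,
\]
with $\gamma>0$ exactly because $p>p_c^s$. Hence the local existence time can be taken as
\[
T(u_0)=c\,\Vert u_0\Vert_{L^p}^{-(\beta-1)/\gamma}.
\]
I restrict to $T\le1$ so that the short-time bounds of Theorem \ref{Lpmapping} apply. If $\Vert u_0\Vert_{L^p}$ is small this caps $T$ at $1$, but that only matters away from $T_{\max}$.

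Now fix $t_0<T_{\max}$ and consider $u(t_0+\cdot)$, which by uniqueness and the semigroup property of the mild formulation is the maximal solution with data $u(t_0)$. If $T(u(t_0))>T_{\max}-t_0$, the solution would extend past $T_{\max}$, contradicting maximality. Therefore
\[
T_{\max}-t_0\geq \min\bigl\{1,\,c\Vert u(t_0)\Vert_{L^p}^{-(\beta-1)/\gamma}\bigr\}.
\]
For $t_0$ close to $T_{\max}$, the left side is less than $1$, so
\[
\Vert u(t_0)\Vert_{L^p}\geq C(T_{\max}-t_0)^{-\gamma/(\beta-1)}.
\]
Since
\[
\frac{-\gamma}{\beta-1}=\frac{n}{2p\ell s}-\frac{1}{\beta-1},
\]
this gives the claim. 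For $t_0$ bounded away from $T_{\max}$ (where $T_{\max}-t_0\ge1$ could occur only if $T_{\max}>1$), the right-hand side is bounded by $C(T_{\max}-t_0)^{-\gamma/(\beta-1)}\le C$. Meanwhile $\Vert u(t)\Vert_{L^p}$ is continuous on $[0,T_{\max})$ and, by the same restart argument, cannot vanish unless the solution is zero (which is global). Its positive minimum on a compact interval $[0,T_{\max}-\delta]$ then absorbs this region after shrinking $C$.

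The main obstacle is bookkeeping the precise dependence of $T$ on $\Vert u_0\Vert_{L^p}$. Specifically, one must verify that the bounds \eqref{lab5.2} hold with constants independent of the data, using Beta-function integrals whose exponents $\frac{n}{2\ell s}(\frac1p-\frac1q)$ and $\frac{n(\beta-1)}{2p\ell s}$ are both below $1$. One must also check that the homogeneous radius choice $2M$ keeps both the self-map and contraction conditions of the form $M^{\beta-1}T^\gamma\lesssim1$.
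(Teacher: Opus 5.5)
Your proposal is correct and is essentially the paper's argument. The paper restarts the local theory at $\tau$ and uses maximality to show $\Vert u(\tau)\Vert_{L^p}+C_1'M^\beta(T_{\max}-\tau)^{\gamma}>M$ for every $M>0$; choosing $M=2\Vert u(\tau)\Vert_{L^p}$ then amounts to your homogeneous radius-$2M$ bookkeeping.

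Your cap $T\le 1$ is unnecessary. For $t\ge1$ one has $e^{-t\lambda_0^s}\lesssim t^{-\sigma_s}$, so the $t^{-\sigma_s}$ bounds of Theorem \ref{Lpmapping} hold for all $t>0$. Dropping the cap gives $T_{\max}-t_0\ge c\Vert u(t_0)\Vert_{L^p}^{-(\beta-1)/\gamma}$ for every $t_0$, with a constant independent of $u$. Your compactness-and-nonvanishing patch on $[0,T_{\max}-1]$ then becomes superfluous, and as written it only yields a solution-dependent constant there.
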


\begin{proof} Let $u_0 \in L^p(\mathbb{R}^n)$ and suppose that the corresponding maximal existence time satisfies $T_{\max}<\infty$. Let $u \in C([0,T_{\max}),L^p(\mathbb{R}^n)) $ be the maximal solution of \eqref{eq6.1}, whose existence is guaranteed by Theorem \ref{them6.1}. Fix $\tau \in [0,T_{\max})$ and define \[ v(t)=u(t+\tau), \qquad t \in[0, T_{\max}-\tau ), \] with $v(0)=u(\tau).$ Then, arguing as in the proof of Theorem \ref{them6.1}, we claim that there exists a positive constant $C_1'$ such that 
\begin{equation}\label{eqq6.10} \Vert u(\tau)\Vert_{L^p(\mathbb{R}^n)} + C_1' M^\beta (T_{\max}-\tau)^{1-\frac{n(\beta-1)}{2p\ell s}} > M, \qquad \forall M>0. \end{equation} 

Indeed, assume by contradiction that \eqref{eqq6.10} is false. Then there exists some $M>0$ such that \[ \Vert u(\tau)\Vert_{L^p(\mathbb{R}^n)} + C_1' M^\beta (T_{\max}-\tau )^{1-\frac{n(\beta-1)}{2p\ell s}} \leq M. \] 

By the local existence argument used in Theorem \ref{them6.1}, this estimate implies that the solution $v$ can be extended up to the interval $[0,T_{\max}-\tau].$ Consequently, $u$ would be well-defined at $T_{\max}$, which contradicts the maximality of $T_{\max}$. 

Hence, \eqref{eqq6.10} holds for every fixed $\tau \in[0,T_{\max})$ and for all $M>0$. Now, since \eqref{eqq6.10} is valid for every $M>0$, we choose $M=2\Vert u(\tau)\Vert_{L^p(\mathbb{R}^n)}.$ Substituting this value into \eqref{eqq6.10}, we obtain \[ \Vert u(\tau)\Vert_{L^p(\mathbb{R}^n)} + C_1' \bigl(2\Vert u(\tau)\Vert_{L^p(\mathbb{R}^n)}\bigr)^\beta (T_{\max}-\tau)^{1-\frac{n(\beta-1)}{2p\ell s}} > 2\Vert u(\tau)\Vert_{L^p(\mathbb{R}^n)}. \] 

Therefore, \[ C_1' 2^\beta \Vert u(\tau)\Vert_{L^p(\mathbb{R}^n)}^\beta (T_{\max}-\tau)^{1-\frac{n(\beta-1)}{2p\ell s}} > \Vert u(\tau)\Vert_{L^p(\mathbb{R}^n)}. \] Since $\Vert u(\tau)\Vert_{L^p(\mathbb{R}^n)}>0$, it follows that \[ \Vert u(\tau)\Vert_{L^p(\mathbb{R}^n)}^{\beta-1} \geq C (T_{\max}-\tau)^{-1+\frac{n(\beta-1)}{2p\ell s}}, \] for some positive constant $C$. Hence, \[ \Vert u(\tau)\Vert_{L^p(\mathbb{R}^n)} \geq C (T_{\max}-\tau)^{\frac{n}{2p\ell s}-\frac{1}{\beta-1}}, \] for all $\tau \in[0,T_{\max})$. Replacing $\tau$ by $t$, we obtain \[ \Vert u(t)\Vert_{L^p(\mathbb{R}^n)} \geq C (T_{\max}-t)^{\frac{n}{2p\ell s}-\frac{1}{\beta-1}}, \qquad t\in[0,T_{\max}). \] This completes the proof. \end{proof}

We finally prove a small-data global existence result in the critical exponent case.

\begin{theorem}
\label{criticalglobal}
Let $s>0$, $\beta>1$, and assume that
\[
    p_c^s:=\frac{n(\beta-1)}{2\ell s}>1.
\]
Let $u_0\in L^{p_c^s}(\mathbb R^n)$. If $\Vert u_0\Vert_{L^{p_c^s}(\mathbb R^n)}$
is sufficiently small, then the corresponding solution of \eqref{eq6.1} is global, that is, $T_{\max}=\infty.$
\end{theorem}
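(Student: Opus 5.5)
We run a Kato-type fixed point argument in a space with time weights, as in the proof of Theorem \ref{them6.1}. The difference is that at the critical exponent $p=p_c^s$ the time power produced by the Duhamel term vanishes. Smallness of the data must therefore replace smallness of $T$. Write $p_c=p_c^s$ and choose an auxiliary exponent $r$ with $p_c<r<\infty$, taken close to $p_c$ and also large enough that $r>\beta$. Set
\[
\alpha:=\frac{n}{2\ell s}\Big(\frac1{p_c}-\frac1r\Big)=\frac1{\beta-1}-\frac{n}{2\ell s r}.
\]
On $(0,\infty)$ we work with the norm
\[
\Vert u\Vert_{X}:=\sup_{t>0}\,\min\{t,1\}^{\alpha}e^{\delta t}\Vert u(t)\Vert_{L^r}+\sup_{t>0}\Vert u(t)\Vert_{L^{p_c}},
\]
where $0\le\delta<\lambda_0^s$ is a fixed small parameter. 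Taking $\delta=0$ is also fine if we only want boundedness.

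\textbf{Linear part.} Theorem \ref{Lpmapping}(i), applied with $(p_c,r)$, gives $\Vert e^{-t\APQ^s}u_0\Vert_{L^r}\le Ct^{-\alpha}\Vert u_0\Vert_{L^{p_c}}$ for $t\le1$. For $t>1$ the bound is $Ce^{-t\lambda_0^s}\Vert u_0\Vert_{L^{p_c}}$. The $L^{p_c}$ norm is controlled in the same way. Hence $\Vert e^{-t\APQ^s}u_0\Vert_X\le C_0\Vert u_0\Vert_{L^{p_c}}$.

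\textbf{Nonlinear part.} Estimate the Duhamel term from $L^{r/\beta}$ to $L^r$. Note $1<r/\beta$ holds because $r>\beta$, and $r/\beta\le r$. Theorem \ref{Lpmapping}(i) gives the kernel factor $(t-\tau)^{-\frac{n}{2\ell s}\frac{\beta-1}{r}}$ for $t-\tau\le1$, and exponential decay for $t-\tau>1$. Combining this with $\Vert |u|^{\beta-1}u\Vert_{L^{r/\beta}}=\Vert u\Vert_{L^r}^\beta\le \Vert u\Vert_X^\beta\min\{\tau,1\}^{-\alpha\beta}e^{-\beta\delta\tau}$, the short-time piece becomes the Beta integral
\[
\int_0^t(t-\tau)^{-\frac{n(\beta-1)}{2\ell s r}}\tau^{-\alpha\beta}\,d\tau=c\,t^{1-\frac{n(\beta-1)}{2\ell sr}-\alpha\beta}=c\,t^{-\alpha}.
\]
The exponents are matched by the identity $1-\frac{n(\beta-1)}{2\ell sr}-\alpha\beta=-\alpha$. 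This identity is exactly the criticality condition $p_c=\frac{n(\beta-1)}{2\ell s}$, which is why no power of $T$ appears. Convergence of the integral requires $\frac{n(\beta-1)}{2\ell sr}<1$, which is equivalent to $r>p_c$, and $\alpha\beta<1$, which holds for $r$ close to $p_c$ since $\alpha\to0$ as $r\downarrow p_c$.

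For the $L^{p_c}$ component we argue in the same way from $L^{r/\beta}$ to $L^{p_c}$. This is valid only when $r/\beta\le p_c$; otherwise we instead use the $q\le p$ branch of Theorem \ref{Lpmapping}(i), which has the $k$-exponent. Again the time exponents cancel, and we get a bound $C\Vert u\Vert_X^\beta$ uniform in $t\le1$. For large times the factor $e^{-(t-\tau)\lambda_0^s}e^{-\beta\delta\tau}$, with $\delta<\lambda_0^s$, is integrable uniformly, and this preserves the weight $e^{\delta t}$.

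\textbf{Fixed point and conclusion.} We obtain $\Vert\Psi[u]\Vert_X\le C_0\Vert u_0\Vert_{L^{p_c}}+C\Vert u\Vert_X^\beta$. The Lipschitz bound $\Vert\Psi[u]-\Psi[v]\Vert_X\le C(\Vert u\Vert_X^{\beta-1}+\Vert v\Vert_X^{\beta-1})\Vert u-v\Vert_X$ follows in the same way. If $\Vert u_0\Vert_{L^{p_c}}\le\varepsilon$, then $\Psi$ is a contraction on the ball of radius $2C_0\varepsilon$, so we obtain a global mild solution with $\sup_t\Vert u(t)\Vert_{L^{p_c}}<\infty$. Continuity in time follows as in Theorem \ref{them6.1}. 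By uniqueness and the blow-up alternative this gives $T_{\max}=\infty$. The blow-up alternative of Theorem \ref{them6.1} is stated for $p>p_c$, so we either rerun the continuation in the critical class, or note that for $t>0$ the solution lies in $L^r$ with $r>p_c$ and apply Theorem \ref{them6.1} in $L^r$.

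\textbf{Main obstacle.} The delicate step is the choice of $r$. Three constraints must hold together: $r>p_c$, $\alpha\beta<1$, and $r/\beta>1$, so that Theorem \ref{Lpmapping}(i) applies with $p,q\in(1,\infty)$. If $r$ close to $p_c$ violates $r>\beta$, I would try an intermediate Lebesgue exponent, bounding the nonlinearity from $L^{r/\beta}$ with $r/\beta>1$. A related concern is that the $L^{p_c}$ bound for the Duhamel term may involve the $k$-exponent branch. In that case I would replace the $L^{p_c}$ component of the norm with the $L^r$ control alone, and deduce globality from the $L^r$ bound together with the blow-up alternative in $L^r$.
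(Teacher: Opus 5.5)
Your proposal is correct and follows essentially the paper's route. Both use a Kato-type fixed point in a time-weighted $L^r$ space with $r>p_c^s$, the $L^{r/\beta}\to L^r$ smoothing from Theorem \ref{Lpmapping}, and a Beta integral whose time exponent vanishes exactly at criticality; your extra $L^{p_c^s}$ component and the weight $e^{\delta t}$ are harmless refinements.

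The ``main obstacle'' you flag is not one. The condition $\alpha\beta<1$ is equivalent to $r<\beta p_c^s$, and since $p_c^s>1$ the interval $(\max\{p_c^s,\beta\},\beta p_c^s)$ is nonempty, so any $r$ in it satisfies all three constraints at once. Such an $r$ also gives $r/\beta<p_c^s$, so your $L^{p_c^s}$ estimate only ever uses the $\ell$-branch. It also secures $r/\beta>1$, which the paper's choice $p_c^s<r<\beta p_c^s$ tacitly needs.
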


\begin{proof}
Choose $r$ such that $p_c^s<r<\beta p_c^s.$
Equivalently, $ \frac{1}{\beta p_c^s}
<\frac1r<
    \frac1{p_c^s}.$
Since $ p_c^s=\frac{n(\beta-1)}{2\ell s},$
this is the same as $ \frac{2\ell s}{n\beta(\beta-1)} <
    \frac{1}{r}   <
    \frac{2\ell s}{n(\beta-1)}.$
Define $\rho :=
    \frac{1}{\beta-1} -
    \frac{n}{2r\ell s}.$
Then $\rho>0$. Moreover, we see that 
\begin{equation}
\label{lessone}
    \rho +1-
    \frac{n(\beta-1)}{2r\ell s}-
    \rho\beta =0.
\end{equation}
We also note that
\[
    \rho\beta<1,
    \qquad
    \frac{n(\beta-1)}{2r\ell s}<1.
\]

Let
\[
    Y
    :=
    \left\{
    u:(0,\infty)\to L^r(\mathbb R^n):
    \Vert u\Vert_Y<\infty
    \right\},
\]
where $\Vert u\Vert_Y
    :=
    \sup_{t>0}
    t^\rho
    \Vert u(t)\Vert_{L^r}.$
    
For $M>0$, set $Y_M
    :=
    \left\{
    u\in Y:
    \Vert u\Vert_Y\leq M
    \right\},$
equipped with the metric $$d(u,v)
    :=
    \sup_{t>0}
    t^\rho
    \Vert u(t)-v(t)\Vert_{L^r}.$$
Then $(Y_M,d)$ is a complete metric space.

Define the fixed-point map
\[
    \mathfrak J_{u_0}(u)(t)
    :=
    e^{-t\APQ^s}u_0
    +
    \int_0^t
    e^{-(t-\tau)\APQ^s}
    \big(|u(\tau)|^{\beta-1}u(\tau)\big)
    \,d\tau.
\]
Assume first that
\begin{equation}
\label{6.11}
    \sup_{t>0}
    t^\rho
    \Vert e^{-t\APQ^s}u_0\Vert_{L^r}
    \leq
    \delta.
\end{equation}
We shall choose $\delta>0$ sufficiently small later.

Let $u,v\in Y_M$. Using Theorem \ref{Lpmapping} with exponents $ \left(\frac r\beta,r\right),$
we obtain
\[
\begin{aligned}
&\Vert
e^{-(t-\tau)\APQ^s}
\big(
|u(\tau)|^{\beta-1}u(\tau)
-
|v(\tau)|^{\beta-1}v(\tau)
\big)
\Vert_{L^r}
\\
&\qquad
\leq
C
(t-\tau)^{-\frac{n(\beta-1)}{2r\ell s}}
\Vert
|u(\tau)|^{\beta-1}u(\tau)
-
|v(\tau)|^{\beta-1}v(\tau)
\Vert_{L^{r/\beta}}.
\end{aligned}
\]
By H\"older's inequality and the pointwise Lipschitz bound for the nonlinearity,
\[
\begin{aligned}
\Vert
|u|^{\beta-1}u
-
|v|^{\beta-1}v
\Vert_{L^{r/\beta}}
\leq
C_\beta
\left(
\Vert u\Vert_{L^r}^{\beta-1}
+
\Vert v\Vert_{L^r}^{\beta-1}
\right)
\Vert u-v\Vert_{L^r}.
\end{aligned}
\]
Since $u,v\in Y_M$, $\Vert u(\tau)\Vert_{L^r} +
    \Vert v(\tau)\Vert_{L^r}
    \leq C M\tau^{-\rho},$
and $\Vert u(\tau)-v(\tau)\Vert_{L^r}
    \leq
    d(u,v)\tau^{-\rho},$ we have

\[
\begin{aligned}
\Vert
e^{-(t-\tau)\APQ^s}
\big(
|u(\tau)|^{\beta-1}u(\tau)
-
|v(\tau)|^{\beta-1}v(\tau)
\big)
\Vert_{L^r}
\leq
C
M^{\beta-1}
d(u,v)
(t-\tau)^{-\frac{n(\beta-1)}{2r\ell s}}
\tau^{-\rho\beta}.
\end{aligned}
\]
Thus,
\[
\begin{aligned}
t^\rho
\Vert
\mathfrak J_{u_0}(u)(t)
-
\mathfrak J_{u_0}(v)(t)
\Vert_{L^r}
\leq
C
t^\rho
M^{\beta-1}
d(u,v)
\int_0^t
(t-\tau)^{-\frac{n(\beta-1)}{2r\ell s}}
\tau^{-\rho\beta}
\,d\tau.
\end{aligned}
\]
Using the change of variables $\tau=t\theta$, we get
\[
\begin{aligned}
t^\rho
\int_0^t
(t-\tau)^{-\frac{n(\beta-1)}{2r\ell s}}
\tau^{-\rho\beta}
\,d\tau
=
t^{
\rho+1
-\frac{n(\beta-1)}{2r\ell s}
-\rho\beta
}
\int_0^1
(1-\theta)^{-\frac{n(\beta-1)}{2r\ell s}}
\theta^{-\rho\beta}
\,d\theta.
\end{aligned}
\]
The integral is finite because $ \rho\beta<1,\,
    \frac{n(\beta-1)}{2r\ell s}<1,$
and the power of $t$ is zero by \eqref{lessone}. Hence there exists $L>0$ such that
\begin{equation}
\label{criticalcontraction}
    d\big(\mathfrak J_{u_0}(u),\mathfrak J_{u_0}(v)\big)
    \leq
    L M^{\beta-1}d(u,v).
\end{equation}
Similarly, taking $v=0$, we obtain
$\Vert \mathfrak J_{u_0}(u)\Vert_Y
    \leq \delta + L M^\beta.$

Choose $M>0$ so small that $L M^{\beta-1}\leq \frac12.$
Then choose $\delta>0$ so small that
\[
    \delta+L M^\beta\leq M.
\]
With these choices, $\mathfrak J_{u_0}$ maps $Y_M$ into itself and is a strict contraction on $Y_M$. Therefore, by Banach's fixed point theorem, there exists a unique global fixed point $u\in Y_M$.

It remains to verify that \eqref{6.11} follows from smallness of $u_0$ in $L^{p_c^s}$. By Theorem \ref{Lpmapping}, for $0<t\leq1$,
\[
    \Vert e^{-t\APQ^s}u_0\Vert_{L^r}
    \leq
    C
    t^{-\frac{n}{2\ell s}
    \left(
    \frac1{p_c^s}-\frac1r
    \right)}
    \Vert u_0\Vert_{L^{p_c^s}}.
\]
Since $\frac{n}{2\ell s}
    \left(
    \frac1{p_c^s}-\frac1r
    \right)  =
    \rho,$
we obtain
\[
    t^\rho
    \Vert e^{-t\APQ^s}u_0\Vert_{L^r}
    \leq
    C
    \Vert u_0\Vert_{L^{p_c^s}},
    \qquad 0<t\leq1.
\]
For $t>1$, the large-time estimate in Theorem \ref{Lpmapping} gives
\[
    \Vert e^{-t\APQ^s}u_0\Vert_{L^r}
    \leq
    C e^{-t\lambda_0^s}
    \Vert u_0\Vert_{L^{p_c^s}},
\]
and hence
\[
    t^\rho
    \Vert e^{-t\APQ^s}u_0\Vert_{L^r}
    \leq
    C
    t^\rho e^{-t\lambda_0^s}
    \Vert u_0\Vert_{L^{p_c^s}}
    \leq
    C
    \Vert u_0\Vert_{L^{p_c^s}}.
\]
Therefore,
\[
    \sup_{t>0}
    t^\rho
    \Vert e^{-t\APQ^s}u_0\Vert_{L^r}
    \leq
    C
    \Vert u_0\Vert_{L^{p_c^s}}.
\]
Thus \eqref{6.11} holds whenever $\Vert u_0\Vert_{L^{p_c^s}}$ is sufficiently small. Consequently, the solution is global. This completes the proof.
\end{proof}

\section{Global well-posedness of  nonlinear fractional heat equations associated with a class of anharmonic oscillators with initial data on Modulation space}\label{Modulation}
In this section, our main aim is to investigate the global well-posedness of the following nonlinear fractional heat equation associated with the anharmonic oscillator $\APQ:$
\begin{equation}
	\begin{cases}
		\partial_t u+ \APQ^s u=\lambda  |u|^{2\beta} u, \\
		u(0, x)=u_0(x),
	\end{cases}
\end{equation}
for $(t,x) \in (0, \infty) \times \mathbb{R}^n,$ where $\beta \in \mathbb{N},$ $\lambda \in \mathbb{C}$ and $s>0.$\\

We begin by recalling some well-known properties of modulation space \cite{feich:mod,than:hs}. The following result is concerned with the algebra property of modulation spaces. 
\begin{lemma} \label{lm}
	Let $m \in \mathbb{N}$ and $p_i,q_j \in [0, 1]$ for $1\leq i \leq m$ such that $\sum_{i=1}^m \frac{1}{p_i}=\frac{1}{p_0}$ and $\sum_{i=1}^m \frac{1}{q_i}=m-1+\frac{1}{q_0}.$ Then, for some $C>0,$ we have
	$$\Bigg\Vert \prod_{i=1}^m f_i \Bigg\Vert_{\M^{p_0,q_0}(\mathbb{R}^n)}\leq C \prod_{i=1}^m \Vert f_i \Vert_{\M^{p_i,q_i}(\mathbb{R}^n)}.$$
\end{lemma}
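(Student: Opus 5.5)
The plan is to prove the statement by induction on $m$, reducing everything to the bilinear case $m=2$. We read the exponent ranges as $p_i,q_i\in[1,\infty]$, the range in which the Hölder/Young relations in the hypotheses make sense. For $m=2$, the identity $\frac1{q_1}+\frac1{q_2}=1+\frac1{q_0}$ is exactly the Young convolution condition on $\ell^q(\mathbb Z^n)$, and $\frac1{p_1}+\frac1{p_2}=\frac1{p_0}$ is the Hölder condition in $x$. This suggests using the frequency-uniform decomposition characterization of $\M^{p,q}$ rather than working with the short-time Fourier transform directly.

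\textbf{Setup.} Fix a smooth partition of unity $\{\sigma_j\}_{j\in\mathbb Z^n}$ with $\operatorname{supp}\sigma_j\subset j+[-1,1]^n$, and let $\square_j=\sigma_j(D)$. We use the standard equivalence
\[
\Vert f\Vert_{\M^{p,q}}\asymp \Big\Vert \big(\Vert\square_j f\Vert_{L^p}\big)_{j\in\mathbb Z^n}\Big\Vert_{\ell^q},
\]
valid for $1\le p,q\le\infty$ (see \cite{feich:mod,groch:bK}).

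\textbf{Bilinear estimate.} We write $f_1f_2=\sum_{j_1,j_2}\square_{j_1}f_1\,\square_{j_2}f_2$. The Fourier support of each summand lies in $j_1+j_2+[-2,2]^n$, so
\[
\square_j(f_1f_2)=\sum_{|j-j_1-j_2|_\infty\le 3}\square_j\big(\square_{j_1}f_1\,\square_{j_2}f_2\big).
\]
Since the $\square_j$ are uniformly bounded on $L^{p_0}$ (their kernels are translates and modulations of one fixed $L^1$ function), Hölder gives
\[
\Vert\square_j(f_1f_2)\Vert_{L^{p_0}}\lesssim\sum_{|j-j_1-j_2|\le3}\Vert\square_{j_1}f_1\Vert_{L^{p_1}}\Vert\square_{j_2}f_2\Vert_{L^{p_2}}.
\]
The right-hand side is a finite sum, over shifts $|e|_\infty\le3$, of discrete convolutions $(a*b)(j-e)$, where $a_{j}=\Vert\square_{j}f_1\Vert_{L^{p_1}}$ and $b_{j}=\Vert\square_{j}f_2\Vert_{L^{p_2}}$. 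Taking the $\ell^{q_0}$ norm and applying Young's inequality on $\mathbb Z^n$ yields $\Vert a\Vert_{\ell^{q_1}}\Vert b\Vert_{\ell^{q_2}}$, which is the $m=2$ case.

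\textbf{Induction.} For general $m$, set $F=\prod_{i=1}^{m-1}f_i$. Define intermediate exponents $\frac1{p^*}=\sum_{i<m}\frac1{p_i}$ and $\frac1{q^*}=\sum_{i<m}\frac1{q_i}-(m-2)$. One checks that the pair $(F,f_m)$ then satisfies the two-function relations with target $(p_0,q_0)$. The induction hypothesis bounds $\Vert F\Vert_{\M^{p^*,q^*}}$ by $\prod_{i<m}\Vert f_i\Vert_{\M^{p_i,q_i}}$.

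\textbf{Main obstacle.} The main difficulty is verifying that the intermediate exponents are admissible, namely $p^*\ge1$ and $1\le q^*\le\infty$. From $\frac1{q_m}\le1$ we get $\frac1{q^*}=1+\frac1{q_0}-\frac1{q_m}\ge\frac1{q_0}\ge0$. From $\sum_{i<m}\frac1{q_i}\le m-1$ we get $\frac1{q^*}\le1$. For $p^*$ we have $\frac1{p^*}\le\frac1{p_0}\le1$. So the induction closes.

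Density issues for $p$ or $q=\infty$ are handled by defining the product for $\mathcal S$ functions first and extending by weak-$*$ approximation, as in \cite{than:hs}.
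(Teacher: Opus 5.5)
Your proof is correct, with the garbled hypothesis read as $p_i,q_i\in[1,\infty]$. There is no argument in the paper to compare it with: Lemma \ref{lm} is not proved there, only quoted from \cite{feich:mod,than:hs}.

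The argument usually given in that literature uses the short-time Fourier transform rather than a frequency-uniform decomposition. With the window $g^m$ for the product one has, up to a constant, $V_{g^m}\big(\prod_i f_i\big)(x,\xi)=\big(V_gf_1(x,\cdot)*\cdots*V_gf_m(x,\cdot)\big)(\xi)$. Minkowski and H\"older in $x$, followed by the $m$-fold Young inequality in $\xi$, then give the estimate for all $m$ at once. The exponent condition for that Young inequality is exactly $\sum_i 1/q_i=m-1+1/q_0$, and the induction is absorbed into it.

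Your version is the discrete counterpart. The convolution identity is replaced by the support bookkeeping $|j-j_1-j_2|_\infty\le 3$ together with uniform $L^{p_0}$-bounds for $\square_j$, and Young is applied on $\mathbb Z^n$. You do the induction by hand, and your admissibility check for $(p^*,q^*)$ is correct. Your route needs no window normalizations; the cost is committing to a specific partition of unity.

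Two points should be stated explicitly.

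\textbf{The range of $p_0,q_0$.} You use $1/p_0\le 1$ in three places: for $1/p^*$, for the triangle inequality in $L^{p_0}$, and for the uniform boundedness of $\square_j$ on $L^{p_0}$. The statement as printed only constrains the indices $1\le i\le m$, so you should add $p_0,q_0\in[1,\infty]$ to the hypotheses. This costs nothing in the paper: Lemma \ref{multiesti} uses $m=2\beta+1$, $p_1=p$, $p_2=\dots=p_m=\infty$ and $q_i=q$, hence $p_0=p\ge 1$.

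\textbf{The choice of partition.} Take $\sigma_j=\sigma_0(\cdot-j)$, for instance $\sigma_j=\rho(\cdot-j)/\sum_k\rho(\cdot-k)$. Then the kernels of $\square_j$ are the modulations $e^{ij\cdot x}(\mathcal F^{-1}\sigma_0)(x)$ of a single $L^1$ function, which gives the uniform bounds. An arbitrary smooth partition with the stated supports does not give uniform bounds unless its derivatives are uniformly controlled.
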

The next result easily follows from Lemma \ref{lm} and the embedding of modulation spaces (for proof, see e.g. \cite{than:hs}). 
\begin{lemma} \label{multiesti} Let $p,q,r \in [1, \infty]$ and $\frac{1}{r}+2\beta=\frac{2\beta+1}{q}$ for $\beta \in \mathbb{N}.$
	Then the following multi-linear estimate holds:
	$$\Vert |f|^{2\beta} f \Vert_{\M^{p, r}(\mathbb{R}^n)} \leq C \Vert f \Vert_{\M^{p,q}(\mathbb{R}^n)}^{2\beta+1}.$$
\end{lemma}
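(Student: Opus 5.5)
Since $\beta\in\mathbb N$, we write $|f|^{2\beta}f=f^{\beta+1}\,\overline{f}^{\,\beta}$. This is a product of $m=2\beta+1$ factors, each equal to $f$ or $\overline f$. We then apply Lemma \ref{lm} with all $f_i\in\{f,\overline f\}$. (The exponent ranges there should read $[1,\infty]$; we use it in that form.)

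For the $p$-indices, the target is $\mathcal M^{p,r}$ with the same $p$ on the left. So we need exponents $p_i$ with $\sum_{i}1/p_i=1/p$ and $\mathcal M^{p,q}\hookrightarrow\mathcal M^{p_i,q}$. Take $p_1=p$ and $p_i=\infty$ for $i\ge2$. Then $\sum 1/p_i=1/p$, and the embedding $\mathcal M^{p,q}\hookrightarrow\mathcal M^{\infty,q}$ holds by the inclusion relations recalled in Section \ref{preliminaries}.

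For the $q$-indices, take $q_i=q$ for all $i$. Lemma \ref{lm} then requires
\[
\frac{2\beta+1}{q}=2\beta+\frac1{r},
\]
which is exactly the hypothesis relating $r$ and $q$. This yields
\[
\Vert |f|^{2\beta}f\Vert_{\mathcal M^{p,r}}\le C\Vert f\Vert_{\mathcal M^{p,q}}\prod_{i=2}^{2\beta+1}\Vert f_i\Vert_{\mathcal M^{\infty,q}}\le C\Vert f\Vert_{\mathcal M^{p,q}}^{2\beta+1}.
\]

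Conjugation is harmless here. We have $V_g\overline f(x,\xi)=\overline{V_{\overline g}f(x,-\xi)}$, so $\Vert\overline f\Vert_{\mathcal M^{p,q}}\asymp\Vert f\Vert_{\mathcal M^{p,q}}$ by the window independence of the norm.

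The main point to check is admissibility. The hypothesis forces $1/r=(2\beta+1)/q-2\beta\ge0$, i.e. $q\le (2\beta+1)/(2\beta)$, and this also gives $r\ge1$ when $q\ge1$; so the exponents are consistent. The condition $\sum 1/q_i=m-1+1/q_0$ is precisely the Young-type convolution condition in the frequency variable that underlies Lemma \ref{lm}.

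If one prefers not to rely on Lemma \ref{lm} as stated, the fallback is to prove the product estimate directly. The STFT of a product with a product window is a convolution in $\xi$ of the individual STFTs. Hölder's inequality in $x$ then handles the $p$-exponents, and Young's inequality in $\xi$ handles the $q$-exponents, since it needs exactly $\sum 1/q_i=m-1+1/q_0$. We expect this convolution bookkeeping to be the only delicate step.
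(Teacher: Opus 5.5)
Your proposal is correct and follows the paper's route: the paper says the estimate follows from the product estimate of Lemma \ref{lm} and the embedding $\M^{p,q}\hookrightarrow\M^{\infty,q}$, and your choice $m=2\beta+1$, $p_1=p$, $p_i=\infty$ for $i\ge2$, $q_i=q$, $q_0=r$ is the right instantiation of that argument. Your remark that the exponent range in Lemma \ref{lm} should read $[1,\infty]$ is accurate, and so is your handling of $\overline{f}$ via $V_g\overline f(x,\xi)=\overline{V_{\overline g}f(x,-\xi)}$; both are details the paper leaves implicit.
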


Now, prove the main result of this section, that is, Theorem \ref{mod_global_nonlinear}, concerning the global well-posedness of nonlinear heat equations associated with the fractional anharmonic oscillator.

\begin{proof}[Proof of Theorem \ref{mod_global_nonlinear}]
We write the equation in its Duhamel form:
\begin{equation}
\label{eq4.5}
    u(t)
    =
    e^{-t\APQ^s}u_0
    +
    \lambda
    \int_0^t
    e^{-(t-\tau)\APQ^s}
    \big(|u(\tau)|^{2\beta}u(\tau)\big)
    \,d\tau .
\end{equation}

We first establish global existence in $X:=L^\infty([0,\infty),\mathcal M^{p,q}).$

By Theorem \ref{mainthmest}, we have the uniform estimate
\begin{equation}
\label{linear_bdd}
    \Vert e^{-t\APQ^s}f\Vert_{\mathcal M^{p,q}}
    \leq
    C\Vert f\Vert_{\mathcal M^{p,q}},
    \qquad t\geq0.
\end{equation}

We next choose an exponent $r$ adapted to the nonlinear estimate. Since $2\beta+1\leq q',$
we can define $r\in[1,\infty]$ by
\begin{equation}
\label{choice_r}
    \frac1r
    =
    \frac{2\beta+1}{q}-2\beta .
\end{equation}
This gives $r\geq q$. Moreover, $\frac1q-\frac1r
    =
    \frac{2\beta}{q'}.$ Hence, by the assumption $\frac{\beta n}{s\ell}<q',$
we obtain
\begin{equation}
\label{sigma_less_one}
    \sigma
    :=
    \frac{n}{2\ell s}
    \left(
    \frac1q-\frac1r
    \right)
    =
    \frac{\beta n}{\ell s q'}
    <1.
\end{equation}

Applying Theorem \ref{mainthmest} with input space $\mathcal M^{p,r}$ and output space $\mathcal M^{p,q}$, we obtain
\begin{equation}
\label{semigroup_kernel_bound}
    \Vert e^{-t\APQ^s}f\Vert_{\mathcal M^{p,q}}
    \leq
    C(t)\Vert f\Vert_{\mathcal M^{p,r}},
\end{equation}
where
\[
    C(t)
    =
    C
    \begin{cases}
        t^{-\sigma}, & 0<t\leq1,\\
        e^{-t\lambda_0^s}, & t\geq1.
    \end{cases}
\]
Since $\sigma<1$, we have $  \int_0^\infty C(t)\,dt<\infty.$

By the multilinear estimate for modulation spaces, see Lemma \ref{multiesti}, and by the choice of $r$ in \eqref{choice_r}, we have
\begin{equation}
\label{nonlinear_mod_est}
    \Vert |u|^{2\beta}u\Vert_{\mathcal M^{p,r}}
    \leq
    C
    \Vert u\Vert_{\mathcal M^{p,q}}^{2\beta+1}.
\end{equation}
Therefore, using Minkowski's inequality, \eqref{semigroup_kernel_bound}, and \eqref{nonlinear_mod_est}, we obtain
\[
\begin{aligned}
&\left\Vert
\int_0^t
e^{-(t-\tau)\APQ^s}
\big(|u(\tau)|^{2\beta}u(\tau)\big)
\,d\tau
\right\Vert_{\mathcal M^{p,q}}
\leq
C
\int_0^t
C(t-\tau)
\Vert u(\tau)\Vert_{\mathcal M^{p,q}}^{2\beta+1}
\,d\tau
\\
&\qquad
\leq
C
\Vert u\Vert_X^{2\beta+1}
\int_0^\infty C(\tau)\,d\tau
\leq
C
\Vert u\Vert_X^{2\beta+1}.
\end{aligned}
\]
Thus, if we define
\[
    \mathfrak T u(t)
    :=
    e^{-t\APQ^s}u_0
    +
    \lambda
    \int_0^t
    e^{-(t-\tau)\APQ^s}
    \big(|u(\tau)|^{2\beta}u(\tau)\big)
    \,d\tau ,
\]
then
\begin{equation}
\label{map_X_bound}
    \Vert \mathfrak T u\Vert_X
    \leq
    C
    \Vert u_0\Vert_{\mathcal M^{p,q}}
    +
    C|\lambda|
    \Vert u\Vert_X^{2\beta+1}.
\end{equation}

Let $B_R
    :=
    \{u\in X:\Vert u\Vert_X\leq R\}.$
Choose $R>0$ sufficiently small so that $ C|\lambda|R^{2\beta}\leq \frac12.$
Then choose $\epsilon>0$ such that $C\epsilon\leq \frac R2.$
If $\Vert u_0\Vert_{\mathcal M^{p,q}}\leq\epsilon$ and $u\in B_R$, then \eqref{map_X_bound} gives
\[
    \Vert \mathfrak T u\Vert_X
    \leq
    \frac R2+\frac R2
    =
    R.
\]
Hence $\mathfrak T$ maps $B_R$ into itself.

We now prove that $\mathfrak T$ is a contraction. We use the pointwise inequality
\[
    \big|
    |z|^{2\beta}z-|w|^{2\beta}w
    \big|
    \leq
    C_\beta
    \big(|z|^{2\beta}+|w|^{2\beta}\big)|z-w|.
\]
The corresponding multilinear estimate gives
\begin{equation}
\label{nonlinear_diff_mod_est}
\begin{aligned}
\Vert
|u|^{2\beta}u-|v|^{2\beta}v
\Vert_{\mathcal M^{p,r}}
\leq
C_\beta
\left(
\Vert u\Vert_{\mathcal M^{p,q}}^{2\beta}
+
\Vert v\Vert_{\mathcal M^{p,q}}^{2\beta}
\right)
\Vert u-v\Vert_{\mathcal M^{p,q}}.
\end{aligned}
\end{equation}
Therefore, for $u,v\in B_R$,
\[
\begin{aligned}
    \Vert \mathfrak T u-\mathfrak T v\Vert_X
    \leq
    C|\lambda|
    \left(
    \Vert u\Vert_X^{2\beta}
    +
    \Vert v\Vert_X^{2\beta}
    \right)
    \Vert u-v\Vert_X
\leq
    C|\lambda|R^{2\beta}
    \Vert u-v\Vert_X.
\end{aligned}
\]
By making $R>0$ smaller if necessary, we may assume that $C|\lambda|R^{2\beta}<1.$
Thus $\mathfrak T$ is a strict contraction on $B_R$ fixed-point. Banach's fixed point theorem gives a unique fixed point $u\in B_R\subset L^\infty([0,\infty),\mathcal M^{p,q}),$
which is the global mild solution of \eqref{nonpro}.

Additionally, assume that $p<\infty$. Our goal is to show that the unique solution $u\in L^{\infty}\bigl([0,\infty),\mathcal{M}^{p,q}(\mathbb{R}^n)\bigr)$
obtained from \eqref{nonpro} is continuous with respect to $t$, that is, $u\in C\bigl([0,\infty),\mathcal{M}^{p,q}(\mathbb{R}^n)\bigr).$
Since the condition $2\beta+1\leq q'$ implies $q<\infty$, it suffices to prove that the semigroup $\{e^{-t\mathcal{A}^s}\}_{t\geq 0}$ is strongly continuous on $\mathcal{M}^{p,q}(\mathbb{R}^n)$. Indeed, once this property is established, the estimates leading to \eqref{linear_bdd}, together with the Banach fixed-point argument employed above, can be repeated in the space $C\bigl([0,\infty),\mathcal{M}^{p,q}(\mathbb{R}^n)\bigr)$
instead of $L^{\infty}\bigl([0,\infty),\mathcal{M}^{p,q}(\mathbb{R}^n)\bigr),$
yielding the desired conclusion.

By \eqref{linear_bdd}, to verify the strong continuity of $e^{-t\mathcal{H}_{P, Q}^s}$ on $\mathcal{M}^{p,q}(\mathbb{R}^n)$, it is enough to show that, for every $f$ belonging to a dense subspace of $\mathcal{M}^{p,q}(\mathbb{R}^n)$, the mapping $ t\longmapsto e^{-t\mathcal{H}_{P, Q}^s}f $
is continuous as an $\mathcal{M}^{p,q}(\mathbb{R}^n)$-valued function.

To this end, we use the embedding \eqref{anhmet012hh} together with an abstract argument from \cite[p.~194]{NicolaRodino}. Since $e^{-t\mathcal{H}_{P, Q}^s}$ is a strongly continuous semigroup on $L^2(\mathbb{R}^n)$ and $\mathcal{A}^{\beta}$ commutes with $e^{-t\mathcal{H}_{P, Q}^s}$ for every $\beta\in\mathbb{N}$, it follows that
$t\longmapsto e^{-t\mathcal{H}_{P, Q}^s}\mathcal{H}_{P, Q}^{\beta}f
=\mathcal{H}_{P, Q}^{\beta}e^{-t\mathcal{H}_{P, Q}^s}f$
is continuous with values in $L^2(\mathbb{R}^n)$ for every $\beta\in\mathbb{N}$.

Moreover, it follows from \cite[p.~194]{NicolaRodino} that the family of seminorms $
p_{\beta}(f):=\Vert \mathcal{H}_{P, Q}^{\beta}f\Vert_{L^2(\mathbb{R}^n)}
$ for $\beta\in\mathbb{N},$
generates the Schwartz topology on $\mathcal{S}(\mathbb{R}^n)$. Consequently, the map $t\longmapsto e^{-t\mathcal{H}_{P, Q}^s}f$
is continuous with values in $\mathcal{S}(\mathbb{R}^n)$. Since $\mathcal{S}(\mathbb{R}^n)$ is continuously embedded into $\mathcal{M}^{p,q}(\mathbb{R}^n)$, the same map is also continuous as an $\mathcal{M}^{p,q}(\mathbb{R}^n)$-valued function. Therefore, $e^{-t\mathcal{H}_{P, Q}^s}$ defines a strongly continuous semigroup on $\mathcal{M}^{p,q}(\mathbb{R}^n)$. As a consequence, the solution $u$ of \eqref{nonpro} belongs to
$ C\bigl([0,\infty),\mathcal{M}^{p,q}(\mathbb{R}^n)\bigr). $

It remains to prove the exponential decay. We work in the space $Y$ defined by \eqref{spaceY}. First, Theorem \ref{mainthmest} gives
\[
    \Vert e^{-t\APQ^s}f\Vert_{\mathcal M^{p,q}}
    \leq
    C e^{-t\lambda_0^s}
    \Vert f\Vert_{\mathcal M^{p,q}},
    \qquad t\geq1.
\]
For $0\leq t\leq1$, the uniform estimate \eqref{linear_bdd} and the boundedness of $e^{t\lambda_0^s}$ imply
\[
    e^{t\lambda_0^s}
    \Vert e^{-t\APQ^s}f\Vert_{\mathcal M^{p,q}}
    \leq
    C\Vert f\Vert_{\mathcal M^{p,q}}.
\]
Hence
\begin{equation}
\label{linear_Y}
    \Vert e^{-t\APQ^s}f\Vert_Y
    \leq
    C
    \Vert f\Vert_{\mathcal M^{p,q}}.
\end{equation}

Now let $u\in Y$. Using \eqref{semigroup_kernel_bound} and \eqref{nonlinear_mod_est}, we estimate the Duhamel term. Set $\Lambda:=\lambda_0^s.$
Then
\[
\begin{aligned}
e^{t\Lambda}
\left\Vert
\int_0^t
e^{-(t-\tau)\APQ^s}
\big(|u(\tau)|^{2\beta}u(\tau)\big)
\,d\tau
\right\Vert_{\mathcal M^{p,q}}
\leq
&C
e^{t\Lambda}
\int_0^t
C(t-\tau)
\Vert u(\tau)\Vert_{\mathcal M^{p,q}}^{2\beta+1}
\,d\tau
\\
&\qquad
\leq
C
\Vert u\Vert_Y^{2\beta+1}
e^{t\Lambda}
\int_0^t
C(t-\tau)
e^{-(2\beta+1)\tau\Lambda}
\,d\tau .
\end{aligned}
\]
We split the integral into two regions.

First, when $0<t-\tau\leq1$, write $\rho=t-\tau$. Then
\[
\begin{aligned}
&e^{t\Lambda}
\int_{t-1}^{t}
(t-\tau)^{-\sigma}
e^{-(2\beta+1)\tau\Lambda}\,d\tau &
=
\int_0^1
\rho^{-\sigma}
e^{-2\beta t\Lambda}
e^{(2\beta+1)\rho\Lambda}
\,d\rho
\leq
C
\int_0^1
\rho^{-\sigma}\,d\rho
<\infty,
\end{aligned}
\]
because $\sigma<1$. Second, when $t-\tau\geq1$, using the exponential part of the semigroup estimate,
\[
\begin{aligned}
e^{t\Lambda}
\int_0^{t-1}
e^{-(t-\tau)\Lambda}
e^{-(2\beta+1)\tau\Lambda}
\,d\tau
=
\int_0^{t-1}
e^{-2\beta \tau\Lambda}
\,d\tau
\leq
\int_0^\infty
e^{-2\beta \tau\Lambda}
\,d\tau
<\infty.
\end{aligned}
\]
Therefore,
\begin{equation}
\label{duhamel_Y}
    \left\Vert
    \int_0^t
    e^{-(t-\tau)\APQ^s}
    \big(|u(\tau)|^{2\beta}u(\tau)\big)
    \,d\tau
    \right\Vert_Y
    \leq
    C
    \Vert u\Vert_Y^{2\beta+1}.
\end{equation}

Similarly, using \eqref{nonlinear_diff_mod_est}, one obtains
\begin{equation}
\label{duhamel_Y_diff}
    \Vert \mathfrak T u-\mathfrak T v\Vert_Y
    \leq
    C|\lambda|
    \left(
    \Vert u\Vert_Y^{2\beta}
    +
    \Vert v\Vert_Y^{2\beta}
    \right)
    \Vert u-v\Vert_Y.
\end{equation}

We now repeat the fixed-point argument in the ball $ B_R^Y:=\{u\in Y:\Vert u\Vert_Y\leq R\}.$
By \eqref{linear_Y} and \eqref{duhamel_Y},
\[
    \Vert \mathfrak T u\Vert_Y
    \leq
    C\Vert u_0\Vert_{\mathcal M^{p,q}}
    +
    C|\lambda|
    \Vert u\Vert_Y^{2\beta+1}.
\]
Choosing $R>0$ sufficiently small and then choosing $\epsilon>0$
sufficiently small so that $C\epsilon\leq \frac R2,
   \,\,
    C|\lambda|R^{2\beta}\leq \frac12,$
we conclude that $\mathfrak T$ maps $B_R^Y$ into itself. Moreover, \eqref{duhamel_Y_diff} shows that $\mathfrak T$ is a strict contraction on $B_R^Y$. Hence the fixed point belongs to $Y$. Consequently, $  \sup_{t\geq0}
    e^{t\lambda_0^s}
    \Vert u(t)\Vert_{\mathcal M^{p,q}}
    <\infty,$
or equivalently,
\[
    \Vert u(t)\Vert_{\mathcal M^{p,q}}
    \leq
    C e^{-t\lambda_0^s},
    \qquad t\geq0.
\]
This proves the exponential decay and completes the proof.
\end{proof}

\section*{Acknowledgement}
 SSM is also supported by the DST-INSPIRE Faculty Fellowship DST/INSPIRE/04/2023/002038.

%
%
%
%
%
%
%
%
%
%
%

\bibliographystyle{amsplain}

\end{document}